\newcommand{\revi}[1]{#1}
\newcommand{\revA}[1]{#1}
\newcommand{\revB}[1]{#1}
\newcommand{\revC}[1]{#1}
\newcommand{\revD}[1]{#1}
\newcommand{\revAA}[1]{#1}
\newcommand{\revDD}[1]{#1}
\newcommand{\revZ}[1]{#1}
\newcommand{\V}[1]{\mbox{\boldmath $ #1 $}}
\newtheorem{thm}{Theorem}[section]
\newtheorem{remark}[thm]{Remark}
\newtheorem{definition}[thm]{Definition}
\newtheorem{proposition}[thm]{Proposition}
\numberwithin{equation}{section}
\newcommand*{\vcenteredhbox}[1]{\begingroup
\setbox0=\hbox{#1}\parbox{\wd0}{\box0}\endgroup}
\newcommand{\bbR}{\mathbb{R}}
\newcommand{\vu}{\mathbf{u}}
\newcommand{\vv}{\mathbf{v}}
\newcommand{\vx}{\mathbf{x}}
\newcommand{\vlambda}{\boldsymbol{\lambda}}
\newcommand{\vxi}{\boldsymbol{\xi}}
\newcommand{\veta}{\boldsymbol{\eta}}
\newcommand{\cF}{\mathcal{F}}
\newcommand{\T}{\mathcal{T}}
\newcommand{\M}{\mathbb{M}}
\newcommand{\J}{\mathbb{J}}
\newcommand{\trace}{\textrm{trace}}
\begin{document}

\begin{frontmatter}
\title{Anisotropic mesh quality measures and adaptation for polygonal meshes}
\author[1]{Weizhang Huang}
\address[1]{Department of Mathematics, The University of Kansas, Lawrence, KS 66045, US}
\ead{whuang@ku.edu}

\author[2]{Yanqiu Wang\corref{cor1}}
\address[2]{School of Mathematical Sciences, Nanjing Normal University, Nanjing, China} %
\cortext[cor1]{Corresponding author.}
\ead{yqwang@njnu.edu.cn} %


\begin{abstract}
\revZ{Anisotropic mesh quality measures and adaptation} are studied for \revC{convex polygonal meshes}.
 Three sets of alignment and equidistribution measures are developed,
\revZ{based on least squares fitting, generalized barycentric mappings,
and the singular value decomposition, respectively.}
Numerical tests show that all three sets of mesh quality measures provide good measurements
for the quality of polygonal meshes under given anisotropic metrics.
Based on \revi{the second set of quality measures} and using a moving mesh partial differential equation,
an anisotropic adaptive polygonal mesh method is constructed for the numerical solution of
second-order elliptic equations.
Numerical examples are presented to demonstrate the effectiveness of the method.
\end{abstract}

\begin{keyword}    
\MSC 65M60
anisotropic \sep adaptive mesh \sep polygonal mesh \sep generalized barycentric coordinates
\end{keyword}

\end{frontmatter}

\section{Introduction}

During the last decade, polygonal/polyhedral meshes have gained considerable
attention in the scientific computing community partially due to their flexibility
to deal with complicated geometry, curved boundaries, local mesh refining/coarsening,
and also to their connection with the Voronoi diagram.
Various numerical methods have been developed for solving partial differential equations (PDEs) on
polygonal/polyhedral meshes, including the mimetic finite difference method (see the recent
survey paper \cite{Lipnikov14}), \revA{the finite element method
\cite{Manzini14, Sukumar04, Sukumar06, Talischi09, Talischi10, Wicke07}},
the finite volume method \cite{Herbin96, SunWuZhang13},
the virtual element method (see \cite{Veiga13} and references therein),
the discontinuous Galerkin method \cite{Cockburn09, Gassnera09, MuWangWang15},
and the weak Galerkin method \cite{mwy-wg-stabilization, wy-mixed}.
The error analysis of these methods usually require the mesh to satisfy \revZ{certain shape regularity conditions
\cite{Brezzi05, Gillette12, Herbin96, MuWangWang15, wy-mixed}.
These conditions focus on the isotropic case
and} state that each polygon or polyhedron cannot be too ``thin''.
In practice and for the anisotropic case,
one typically wants the mesh elements to be as regular in shape and uniform in size
as possible under certain metrics
since the approximation error \revZ{and condition number} of the discrete system depend closely on the mesh geometry.
For simplicial meshes, this dependence has been well studied under the topic of ``mesh quality measures''
and many different computable quality measures have been designed;
e.g., see \revZ{\cite{ChenSunXu2006, Hua05c, HR11, HS03, LJ94, Shewchuk2002}}.
More importantly, mesh quality measures often play an essential role in constructing
efficient mesh adaptation algorithms since they also provide a definition of optimal
meshes.

The objective of this work is to develop anisotropic mesh quality measures for \revC{convex polygonal meshes},
from which one can design \revZ{algorithms for anisotropic polygonal mesh adaptation
that allow mesh elements to have a large aspect ratio}. The key is to keep elements aligned to some extent
with the geometry of the solution.
On simplicial meshes,
it has been amply demonstrated that a properly generated anisotropic mesh can
lead to a much more accurate solution than
an isotropic mesh with comparable size; see for example \revZ{\cite{Ape99,AD92,CHMP97,DS89,FP01,FP03,Hua05b,Hua05,Pic03}}.
However, there do not seem to exist any systematic studies of anisotropic 
quality measures and adaptation for polygonal/polyhedral meshes.

%

A major difference between triangular and polygonal meshes is that, all triangles
are affine similar to a single reference triangle and their
quality in shape and size can be measured by comparing with the reference triangle.
Unfortunately, this does not work for polygonal meshes since polygons even with the same number (greater than $3$) of vertices
cannot be mapped into a single reference polygon under affine mappings.
This poses great difficulty in studying polygonal mesh quality measures,
as one can no longer use many techniques developed for triangular meshes.
In this work, in parallel to the so-called alignment (for regularity)
and equidistribution (for size) measures for triangular meshes \cite{Hua01b, Hua05c},
we shall develop three sets of polygonal mesh alignment and equidistribution measures
\revZ{based on least squares fitting, generalized barycentric mappings,
and the singular value decomposition, respectively.}
Numerical tests show that all three sets of measures give good
\revZ{indications} of the actual anisotropic mesh quality under
given metrics, with individual emphases on slightly different aspects.
Based on \revi{the second set of quality measures} and
using the so-called Moving Mesh PDE (MMPDE) moving mesh method \cite{HR11},
we construct an anisotropic adaptive polygonal mesh method for the numerical solution of
second-order elliptic equations,
with the aim of minimizing the $L^2$ norm of approximation errors.
Numerical examples will be presented to demonstrate the effectiveness of the method.

We point out that, although this paper only concerns two-dimensional cases,
our anisotropic quality measures {\revB{and the mesh adaptation method built upon the measures}
can be readily extended into three-dimensions.
A brief description of the 3D extension will be given at the end of Sections \ref{sec:qualitymeasures} and \ref{sec:MMPDE}. 
Moreover, the MMPDE moving mesh method is only an example
of mesh adaptation methods that can be used for generating anisotropic polygonal meshes.
\revA{Other techniques include those using anisotropic Voronoi diagrams \cite{Labelle03}, 
refinements \cite{CHMP97, Weisser2019}, and curvature directions \cite{Alliez03}.
}

The paper is organized as follows. Section \ref{sec:qualitymeasures} is devoted to the development of
three sets of alignment and equidistribution quality measures for polygonal meshes.
An anisotropic adaptive polygonal mesh method is constructed in Section \ref{sec:MMPDE}
based on \revi{the second set of the quality measures} and the MMPDE moving mesh method.
Numerical results obtained with the proposed method
are presented in Section \ref{sec:numericalResults}. Conclusions are drawn
in the last section.

Throughout the paper, the short name ``$n$-gon" stands for a polygon with $n$ vertices.
We consider meshes consisting of convex polygons in this work.  We also assume
that all polygons in a mesh are non-degenerate, i.e., any three vertices of a polygon
do not lie on the same line.

\section{Anisotropic mesh quality measures}
\label{sec:qualitymeasures}

In this section we study three sets of mesh quality measures for a general polygonal mesh $\T$
on a two-dimensional bounded, polygonal domain $\Omega$ under the metric specified by a given tensor $\M$.
The metric tensor $\M = \M(\vx)$ is assumed to be symmetric and uniformly
positive definite on $\Omega$. It depends on the physical solution
in the case of mesh adaptation for the numerical solution of
PDEs (See Section \ref{sec:MMPDE}).
The simplest case is $\M = I$ (the Euclidean metric) where
the mesh quality measures evaluate the shape regularity and size uniformity of the mesh.

\revi{
A unitary regular $n$-gon is usually considered to be of high quality in the Euclidean metric and
can thus be used as a reference $n$-gon.
\revB{Unlike triangles, however, $n$-gons with $n > 3$} generally are not affine similar
to a single reference $n$-gon. Hence the idea of measuring the quality of a polygonal mesh
by comparing its elements to regular $n$-gons through affine mappings does not work in general.
To make it work, one needs to build proper mappings connecting arbitrary polygons with the reference ones,
or re-define reference polygons of ``good quality", or do both.

We first clarify what polygons, in addition to regular $n$-gons, are considered to be of ``good quality''
in terms of shape regularity.
Size uniformity is not considered for now because we can always \revZ{rescale polygons to} a unitary size.
There are several pioneering works in this direction \cite{attene19, Cangiani17, Cangiani19, Gillette12, Lipnikov13, wy-mixed},
most of which require, either explicitly or implicitly, that a shape-regular polygon lies between an outer circle and an inner circle 
such that the ratio between their radii is bounded above.
This guarantees the Bramble-Hilbert lemma and consequently the standard convergence rates of
polynomial approximation on the polygon \cite{Brenner07}.
To facilitate further discussions, we define

   \smallskip
   \begin{definition}
   For a given polygon $T$, its in-radius is the maximum radius of all disks contained inside $T$, and its outer-radius is the
   minimum radius of all disks containing $T$.
   \end{definition}
   \smallskip

Some of the works on polygonal shape regularity also require that the polygon has no ``short edge''.
Recently, researchers noticed that the no ``short edge'' requirement can be dropped in the case of virtual element methods \cite{Brenner18}.
But for other numerical methods, the situation is not completely clear and the no ``short edge'' is still required at least 
in some theoretical analysis \cite{Gillette12}.
Partly because of this and partly because handling ``short edges'' in the mesh quality measure turns out
to be a highly non-trivial issue,
in this paper we simply define a ``good quality'' polygon to be a polygon satisfying the outer-radius/in-radius ratio requirement, i.e.,  
the ratio between the out-radius and in-radius is bounded above by a constant. 
Numerical results and more discussions on short edges will be presented \revZ{in Section \ref{sec:compareMeasures}}.
}

\subsection{Continuous mesh quality measures} \label{subsec:cF}

In this subsection, we shall take a close look
at the development of quality measures for a triangular mesh and then establish continuous
mesh quality measures which will serve as a unified framework for developing quality measures
for a polygonal mesh.

\revi{
  Assume that a ``computational mesh" $\T_C$ has been chosen so that
  for any $T\in\T$, there exists a corresponding
  reference polygon $T_C\in\T_C$ with an associated bijection $\cF_T:\: T_C\to T$.
  Generally speaking, $\T_C$ should be chosen such that its polygons have good quality
  and comparable sizes under the Euclidean metric.
  We emphasize that $\T_C$ can be taken as either a conventional mesh, \revDD{i.e., a partition of a polygonal region,}
  or just a collection of polygons which does not need to form a mesh.
  In the first case, $\T_C$ must have the same topological structure as $\T$
  so that there is a one-to-one correspondence between elements in $\T_C$ and $\T$. 
  This type of computational mesh is used in the mesh adaptation algorithm to be described in Section \ref{sec:MMPDE}
  where how to set up such a $\T_C$ is also discussed. 
  In the second case, the cardinality of the set $\T_C$ should be equal to the number of polygons in $\T$.
Particularly, if several elements in $\T$ have the same reference polygon, the same number of copies of the reference polygon should be included in $\T_C$.
  An example of such a $\T_C$ is the collection of unitary regular $n$-gons
 such that corresponding to each polygon in $\T$ there is a unitary regular polygon of the same number of sides
 in $\T_C$ serving as its reference polygon.
  One may replace the unitary regular $n$-gons in $\T_C$ by good quality $n$-gons when needed.
  Either way, in this definition $\T_C$ is not a conventional mesh by itself.

  Given $\T_C$ and the bijections $\cF_T$ for all $T\in \T$,
  the global bijection $\cF$ is defined to be the collection of all local $\cF_T$'s,
  and we want it to be piecewise differentiable so that its Jacobian matrix exists almost everywhere.
}

Let us recall \revA{how the mesh quality measures for a triangular mesh are defined} in \cite{HR11}.
Assume that
both $\T$ and $\T_C$ consist of triangles. Then, one can simply set
$\cF_T: T_C \to T$ to be an affine mapping.
Denote the vertices of $T$ and $T_C$ by $\vx_i$ and $\vxi_i$, $i=1, 2, 3$, respectively, \revC{and both ordered counter-clockwise}.
For a given constant-valued metric $\M_T$ on $T\in \T$, it is not difficult to see that
the triangle $T$ (measured in the metric $\M_T$)
is similar in shape to the triangle $T_C$ (measured in the Euclidean metric) if
\[
(\vx_i - \vx_j)^t \M_T (\vx_i - \vx_j) = \sigma_T (\vxi_i - \vxi_j)^t (\vxi_i - \vxi_j),\quad i, j = 1, 2, 3,\; i \neq j,
\]
where $\sigma_T$ is a constant \revi{indicating the scale/size of $T$. 
When $\M_T=I$, $\sigma_T$ is simply the ratio between the areas of $T$ and $T_C$.
In general, one may consider $\sigma_T$ as the ratio between the ``area" of $T$  (measured  in the metric $\M_T$) and the area of $T_C$  (measured in the Euclidean metric).
}

Using the affine mapping $\cF_T: T_C \to T$ and its Jacobian matrix
$\J_T$ (which is constant on triangle $T$), we can rewrite the above condition \revB{as}
\[
(\vxi_i - \vxi_j)^t \J_T^t \M_T \J_T (\vxi_i - \vxi_j) = \sigma_T (\vxi_i - \vxi_j)^t (\vxi_i - \vxi_j),\quad i, j = 1, 2, 3,\; i \neq j.
\]
It can be shown (cf. \cite[Lemma~4.1.1]{HR11}) that the above equation implies
\begin{equation}
\J_T^t \M_T \J_T = \sigma_T I .
\label{JMJ-1}
\end{equation}

\revi{
Equation (\ref{JMJ-1}) gives a condition \revB{for $T$ to have a good shape} with reference to $T_C$.
Next, we want to make sure that all $T$s in $\T$ have comparable size/``area", measured under their individual metric $\M_T$.
Recall that triangles in $\T_C$ are quasi-uniform in size by assumption
and that $\sigma_T$ is the ratio between the ``area" of $T$ and the area of $T_C$.
Then, if there exists a constant $\sigma_h$ such that $\sigma_T=\sigma_h$ for all $T\in\T$, 
we can claim that $\T$ is ``uniform with respect to $\T_C$", measured in the piecewise-constant metric $\M_T$ for $T\in \T$.
In the ideal situation,  $\sigma_h$ can be calculated using compatibility.
In fact, taking} determinant of both sides of (\ref{JMJ-1}) and using the fact that
$\det(\J_T) = |T|/|T_C|$, where $|T|$ and $|T_C|$ are respectively the areas of $T$ and $T_C$
under the Euclidean metric, one gets
\[
|T| \revZ{\sqrt{\det (\M_T)}} = \sigma_h |T_C| .
\]
\revB{Summing it up} over all triangles gives
\begin{equation}
\sigma_h = \frac{\sum_{T \in \T} |T| \sqrt{\det (\M_T)}}{\sum_{T_C \in \T_C} |T_C|}.
\label{sigma-00}
\end{equation}
Moreover, (\ref{JMJ-1}) should now be written as
\begin{equation}
\J_T^t \M_T \J_T = \sigma_h I ,\quad \forall T \in \T.
\label{JMJ-2}
\end{equation}

Equation (\ref{JMJ-2}) describes an ideal situation. In practice,
\revD{it may be difficult, or impossible in many cases, to make} all $T\in \T$ be exactly similar to their counterparts and $\sigma_T$
be constant for all elements.
Nevertheless, from (\ref{JMJ-2}) the so-called alignment and equidistribution quality measures
can be developed for a triangular mesh \revZ{\cite{Hua05c}}.
These quality measures can tell us how closely a given mesh is to being an ideal mesh.

Next, we shall find a continuous analogue of (\ref{JMJ-2}),
based on which the alignment and equidistribution quality measures for polygonal meshes can be developed.
To this end, we define the computational domain as
\[
\Omega_c = \bigcup\limits_{T_C \in \T_C} T_C .
\]
Note that $\Omega_c$ can be a conventional two-dimensional domain or a collection of polygons.
It is not difficult to see that a continuous analogue of (\ref{JMJ-2}) is
\begin{equation}
\label{sim-1}
\revD{\J^t\, \M\, \J = \sigma I,\quad \forall \vx \in \Omega,}
\end{equation}
where $\sigma$ is a constant,  $\J = \frac{\partial \vx}{\partial \vxi}$
is the Jacobian matrix of the mapping $\cF: \Omega_c \to \Omega$, and $\vxi$ and $\vx$ are
the coordinates of ${\Omega_c}$ and $\Omega$, respectively.
\revD{Note that although $\J$ is usually expressed as a function of $\vxi\in\Omega_c$, we can always get its value on $\Omega$
using the one-to-one mapping $\cF$. This applies to all functions either defined on $\Omega_c$ or vice versa on $\Omega$.
\revZ{Hereafter, we use} this change of variable implicitly.}
Readers with differential geometry backgrounds may immediately realize that
Equation (\ref{sim-1}) can also be explained using the language of coordinate transformation of differential forms.

At the continuous level, the mesh is represented by the mapping $\cF$
from $\Omega_c$ to $\Omega$.
Moreover, the constant $\sigma$ in (\ref{sim-1}) is determined by compatibility.
\revZ{Indeed,} by taking determinant on both sides of (\ref{sim-1}) and integrating over ${\Omega_c}$ one gets
\begin{equation}
\label{sigma-0}
\sigma = \frac{1}{|{\Omega_c}|} \int_{\Omega} \sqrt{ \det(\M) } d \vx ,
\end{equation}
where $|{\Omega_c}|$ denotes the area of ${\Omega_c}$.

The condition (\ref{sim-1}) implies that all eigenvalues of matrix $\J^t \M \J $ are equal to $\sigma$.
Following \cite{Hua05c}, this condition can be split into two:
\begin{itemize}
\item {\em The alignment condition} requires all eigenvalues of $\J^t \M \J $ be equal,
  i.e., their arithmetic mean is equal to the geometric mean:
 \begin{equation}
\label{eq:contAlignment}
\frac{1}{2} \trace (\J^t \M \J) = \det (\J^t \M \J)^{1/2} ;
 \end{equation}
\item {\em The equidistribution condition} then requires the geometric mean of the eigenvalues be equal to $\sigma$:
\begin{equation}
\label{eq:contEquidistribution}
\det(\J) \sqrt{\det(\M)} = \sigma.
\end{equation}  
\end{itemize}

%
%

Using (\ref{eq:contAlignment}) and (\ref{eq:contEquidistribution}),
we can define \revB{the measure functions} as
\begin{equation}
\label{eq:contMeasurements}
q_{ali}(\vx)  = \frac{\trace (\J^t \M \J) }{2 \det (\J^t \M \J)^{1/2}},\qquad
q_{eq}(\vx) = \frac{\det(\J) \sqrt{\det(\M)} }{ \sigma} .
\end{equation}
In the ideal case when perfect alignment and equidistribution are reached everywhere,
one has $q_{ali} \equiv 1$ and $q_{eq} \equiv 1$ over the entire $\Omega$.
In practice, one usually does not have perfect alignment or equidistribution.
In this case, functions $q_{ali}(\vx)$ and $q_{eq}(\vx)$ indicate how closely
the alignment and equidistribution conditions are satisfied at point $\vx$.
The overall alignment and equidistribution measures can then be defined as
\begin{equation}
\label{eq:contGlobalMeasurements}
Q_{ali} = \max_{\vx\in\Omega} q_{ali} (\vx),\qquad Q_{eq} = \max_{\vx\in\Omega} q_{eq} (\vx).
\end{equation}
From the inequality of the arithmetic and geometric means and the fact that
\[
\int_{{\Omega_c}} q_{eq} d \vxi = |\Omega_c|,
\]
it is not difficult to show that both $Q_{ali}$ and $Q_{eq}$ have the range $[1,\infty)$. Moreover, the smaller they are,
the better the alignment and equidistribution of the mapping $\cF$.

\revB{The continuous mesh quality measures defined above} will serve as a unified framework for defining
actual mesh quality measures for a polygonal mesh.
\revC{Clearly, $Q_{ali}$ and $Q_{eq}$
depend on the metric tensor $\M$, the computational mesh $\T_C$ (which determines $\Omega_c$) and the mapping $\cF$.}
Recall that $\T_C$ can be a conventional mesh or a collection of polygons and the corresponding $\Omega_c$
is a conventional domain or a collection of polygons.
From (\ref{eq:contMeasurements}), one can see that the determining factor for $Q_{ali}$ and $Q_{eq}$
is the Jacobian matrix $\J$ of the bijection $\cF$.
We also point out that the mapping $\cF$ is defined through the local
mapping $\cF_T: T_C \to T$ for all $T\in \T$. Unlike triangular meshes, there does not exist
an affine mapping between $T$ and $T_C$ in general for polygonal meshes.
Thus, one has to specify $\cF$ for a given polygonal mesh $\T$.
Commonly, $\cF_T$ is chosen to be a non-affine mapping for a general $T_C$ or
an affine mapping for a specially chosen $T_C$; see the next three subsections.

We now study three discretizations/discrete approximations to the continuous mesh quality measures.
The key components are the choices of $\T_C$ and the corresponding mapping
$\cF$ (to represent the mesh $\T$).

\subsection{Approximation 1: use least squares fitting}
\label{sec:app1}

In this approximation, we assume that the computational mesh $\T_C$ has been chosen
to be a conventional mesh or a collection of reference polygons.
Typically $\T_C$ should consist of good quality polygons in the Euclidean metric.

As part of the definition, we choose to approximate the metric tensor $\M$ on $T$
by its average \begin{equation}
\M_T = \frac{1}{|T|} \int_T \M d \vx.
\label{eq:def1M}
\end{equation}
This approximation is reasonably accurate while making $\M$ be constant
on each element. To have a constant metric on each element is important
since it can significantly simplify the analysis and derivation.

As mentioned before, $\cF$ is defined through the local mapping $\cF_T: T_C \to T$ \revD{that is not necessarily
affine in general.
The existence of such a local mapping is guaranteed by
the Riemann mapping theorem, which states that there exists a conformal mapping between
  any two open $n$-gons and the mapping can be extended continuously to the boundary}.
As we will see in the next subsection, there actually exist many such mappings.
They all satisfy
\begin{equation}
\vx_i = \cF_T(\vxi_i), \; i = 1, ..., n,
\label{interp-cond-1}
\end{equation}
where $\vxi_i$ and $\vx_i$, $i=1,\cdots n$, are the vertices of $T_C$ and $T$, respectively, \revC{and both ordered counter-clockwise}.
Instead of constructing a specific example of these mappings, we consider to
fit an affine mapping \revZ{$\vx = A_T \vxi + \mathbf{c}$} in the least squares sense based on the above conditions.
Note that such a mapping is an approximation to all bijections between $T_C$ and $T$
satisfying (\ref{interp-cond-1}). Although the approximation is very rough, it serves our purpose
to define mesh quality measures. The condition for the least squares fitting is
\begin{equation} \label{eq:ls1}
A_T \vxi_i + \mathbf{c} = \vx_i,\quad i = 1, ..., n.
\end{equation}

\revi{
We now discuss how to compute $A_T$ using the least squares method.
To this end, one needs to first eliminate the effect of the translation $\mathbf{c}$.
Note that for any given set of coefficients $(\alpha_1,\alpha_2,\ldots, \alpha_n)$ satisfying $0\le \alpha_i\le 1$ and $\sum_{i=1}^n\alpha_i = 1$,
the  convex combinations $\vx_{a} \triangleq \sum_{i=1}^n \alpha_i\vx_i\in T$ and $\vxi_a \triangleq \sum_{i=1}^n \alpha_i\vxi_i\in T_C$ satisfy
\begin{equation} \label{eq:ls2}
A_T \vxi_a + \mathbf{c} = \vx_a ,
\end{equation}
where the subscript ``$a$" stands for ``anchor point". Subtracting \eqref{eq:ls2} from \eqref{eq:ls1} gives
\begin{equation} \label{eq:ls4}
A_T (\vxi_i - \vxi_a) = \vx_i - \vx_a,\quad i = 1, ..., n.
\end{equation}
A least squares solution for the above equation is
\[
A_T = E_T E_{T_C}^+,
\]
where $E_T$ and $E_{T_C}$ are matrices defined by
\begin{align*}
& E_T = [\vx_1 - \vx_a,\; \vx_2 - \vx_a,\; \cdots,\; \vx_n - \vx_a] \in \bbR^{2\times n},
\\
& E_{T_C} = [\vxi_1-\vxi_a,\; \vxi_2 - \vxi_a,\; \cdots,\;\vxi_n-\vxi_a]  \in \bbR^{2\times n},
\end{align*}
and $E_{T_C}^+$ is the pseudo-inverse of $E_{T_C}$.
}
Since both $T$ and $T_C$ are convex and non-degenerate,
both $E_T$ and $E_{T_C}$ have full row rank.
Then, the pseudo-inverse of $E_{T_C}$ can be found as
\[
E_{T_C}^+ = E_{T_C}^t (E_{T_C} E_{T_C}^t)^{-1} \in \bbR^{n\times 2},
\]
which gives rise to
\begin{equation}
\label{J-1}
A_T = E_T E_{T_C}^t (E_{T_C} E_{T_C}^t)^{-1} .
\end{equation}
By construction, $A_T$ is a linear approximation to the Jacobian matrix of any piecewise continuously differentiable
bijection between $\T_C$ and $\T$ satisfying (\ref{interp-cond-1}).

\revi{
  \begin{remark} \label{rem:Q1anchor}
  When $n=3$, it is not difficult to see that $A_T$ is equal to the Jacobian matrix of the unique affine mapping
  between $T_C$ and $T$, and hence is independent of the choice of the anchor point.
    When $n>3$, the choice of anchor point does affect $A_T$.
    So far this effect has not been explored theoretically.
    \revZ{Numerical results to be presented in Section \ref{sec:compareMeasures}} suggest that
  the effect is generally mild and negligible. Therefore,
  unless mentioned otherwise, in the rest of this paper we set the anchor point to be the arithmetic center
$$
\vx_T \triangleq \frac{1}{n} \sum_{i=1}^n \vx_i .
$$
  \end{remark}

  \begin{remark}
    Note that \eqref{eq:ls2} can also be written as
    \begin{equation} \label{eq:ls3}
    \begin{bmatrix}A_T & \mathbf{c}\end{bmatrix} \, \begin{bmatrix}\vxi_i\\1\end{bmatrix} = \vx_i,\quad i = 1, ..., n.
    \end{equation}
    A least squares solution for the augmented equation \eqref{eq:ls3} is $[A_T\, \mathbf{c}] = E_T \tilde{E}_{T_C}^t (\tilde{E}_{T_C} \tilde{E}_{T_C}^t)^{-1}$, where
    $$
    \tilde{E}_{T_C} = \begin{bmatrix}\vxi_1& \vxi_2 & \cdots &\vxi_n \\ 1&1&\cdots&1\end{bmatrix}  \in \bbR^{3\times n} .
      $$
      When $\sum_{i=1}^n\vxi_i=\mathbf{0}$, the matrix $\tilde{E}_{T_C} \tilde{E}_{T_C}^t$
      is block diagonal and hence can be easily inverted. A simple calculation shows that 
      the least squares solution to \eqref{eq:ls3} (and thus \eqref{eq:ls2}) is the same as
      the least squares solution to \eqref{eq:ls4} with $\vxi_a = \mathbf{0}$ and $\vx_a = \vx_T$.
      In this case, $\mathbf{c} =\vx_T$.
  \end{remark}

}

Replacing $\J$ and $\M$ by $A_T$ and $\M_T$ in
(\ref{eq:contMeasurements}) and (\ref{eq:contGlobalMeasurements}),
we obtain the first set of mesh quality measures for a polygonal mesh,
\begin{align}
& Q_{ali,1}  = \max_{T \in \T} \frac{\trace ([E_T E_{T_C}^t (E_{T_C} E_{T_C}^t)^{-1}]^t \M_T [E_T E_{T_C}^t (E_{T_C} E_{T_C}^t)^{-1}]) }{2 \det ([E_T E_{T_C}^t (E_{T_C} E_{T_C}^t)^{-1}]^t \M_T [E_T E_{T_C}^t (E_{T_C} E_{T_C}^t)^{-1}])^{1/2}},
\label{ali-1} \\
& Q_{eq,1} = \max_{T \in \T} \frac{\det(E_T E_{T_C}^t (E_{T_C} E_{T_C}^t)^{-1} ) \sqrt{\det(\M_T)} }{ \sigma_{h,1}} ,
\label{eq-1}
\end{align}
where
\begin{equation}
\label{sigma-1}
\sigma_{h,1} = \frac{1}{N_{p}} \sum_{T \in \T} \det(E_T E_{T_C}^t (E_{T_C} E_{T_C}^t)^{-1} ) \sqrt{\det(\M_T)}
\end{equation}
and $N_{p}$ is the number of the polygons in $\T$.
Notice that $\sigma_{h,1}$ is not a direct approximation of $\sigma$ defined in (\ref{sigma-0}).
Instead, it is defined by \revB{summing up} the following discrete equidistribution condition over all polygons,
\[
\det(E_T E_{T_C}^t (E_{T_C} E_{T_C}^t)^{-1} ) \sqrt{\det(\M_T)}  = \sigma_{h,1} .
\]
The same strategy will be used in \revB{defining other two sets of mesh quality measures}.
This also makes the current formulas consistent with those developed \revZ{in \cite{Hua05c, HR11} for simplicial meshes}.

\revB{To show how well $Q_{ali,1}$ and $Q_{eq,1}$ work, we present numerical results obtained
  with Lloyd's algorithm for generating centroidal Voronoi tessellations (CVTs) \cite{Du99, Lloyd1982},
  a type of Voronoi tessellation where the generator of each polygon is identical to the barycenter of the polygon.
  \revi{CVTs usually consist of evenly distributed and high quality polygons under the Euclidean metric.}
  The Lloyd's algorithm is known to produce a sequence of Voronoi meshes that converges to a CVT, although very slowly.   
  }
  \revi{  
  For simplicity, we eliminate short edges in the mesh in each Lloyd's iteration, by combining nearby vertices into one.
  One may check the last paragraph of Section \ref{sec:compareMeasures} for more details.
  }
\revB{We expect to see that both $Q_{ali,1}$ and $Q_{eq,1}$ (with $\M = I$),
for the sequence of Voronoi meshes generated by the Lloyd's algorithm, decrease and converge to one
if they are correct indicators for the shape regularity and size uniformity.}

In Fig.~\ref{fig:LloydMesh8}, we show Voronoi meshes of $8\times 8$, $16\times 16$
and $32\times 32$ cells obtained with Lloyd's algorithm.
Visually we can see that the meshes are becoming better
in the sense that the cells are getting more regular in shape and more uniform in size.
To compute $Q_{ali,1}$ and $Q_{eq,1}$, we choose the unitary regular $n$-gon with vertices
$\vxi_i = (\cos \frac{2\pi i}{n}, \,\sin \frac{2\pi i}{n})^t$, $i=1, ..., n$ as $T_C$ for each $n$-gon in $\T$.
The results are reported in Fig.~\ref{fig:LloydQC1}.
One can see that both $Q_{ali,1}$ and $Q_{eq,1}$ decrease towards $1$ as the number of iterations increases.
This reflects the convergence nature of Lloyd's algorithm.
\revA{Moreover,
  the decrease is more significant in the first few iterations and then \revB{gets much slower}.
  By comparing Figs.~\ref{fig:LloydMesh8}-\ref{fig:LloydQC1} side-by-side,  
  it confirms that the quick initial drop in the quality measures
  comes from the fast ``smoothing out'' of the initial random Voronoi diagram.
  }
Furthermore, one may notice that the decrease of $Q_{ali,1}$ and $Q_{eq,1}$ is not monotone.
This may be attributed to the facts that (a) Lloyd's algorithm is not designed specifically to minimize these quality
measures and (b) $Q_{ali,1}$ and $Q_{eq,1}$ measure the quality of worst mesh elements.
Overall, we see that $Q_{ali,1}$ and $Q_{eq,1}$ correctly reflect the polygonal
mesh quality under the \revZ{Euclidean} metric.

\begin{figure}[thb]
  \begin{center}
    \includegraphics[width=12cm]{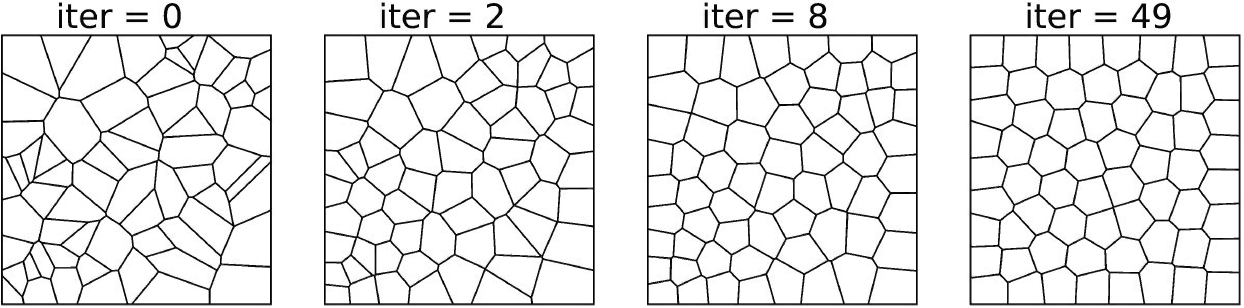}\\
    \includegraphics[width=12cm]{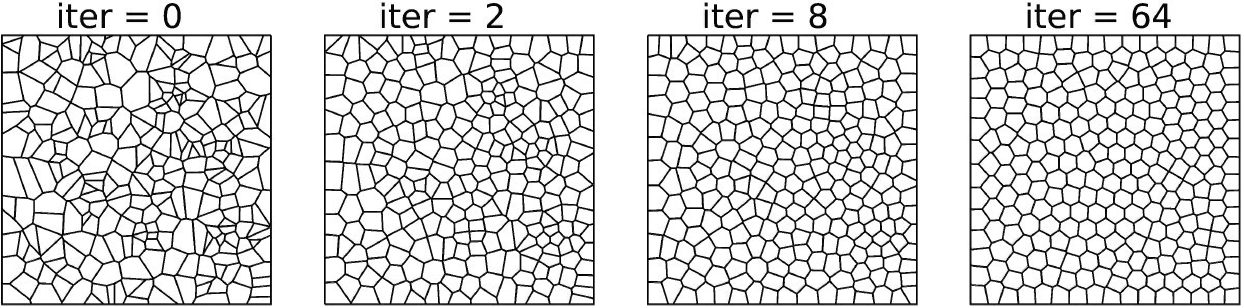}\\
    \includegraphics[width=12cm]{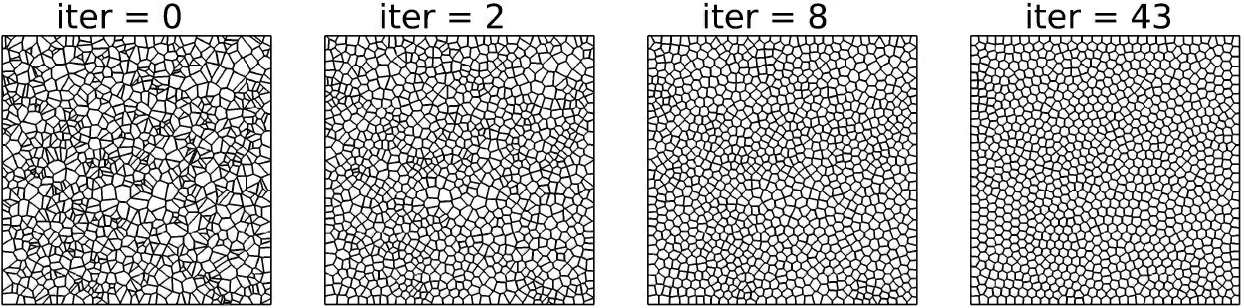}
\end{center}
\caption{Voronoi meshes of $8\times 8$, $16\times 16$, and $32\times 32$ cells in Lloyd's iteration.}
\label{fig:LloydMesh8}
\end{figure}

\begin{figure}[ht]
\begin{center}
\includegraphics[width=4cm]{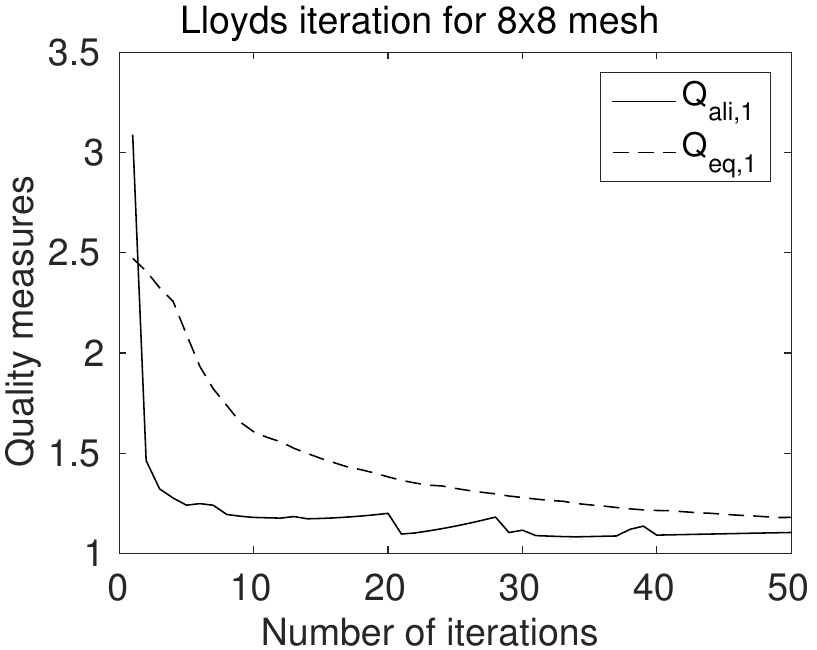} \; \includegraphics[width=4cm]{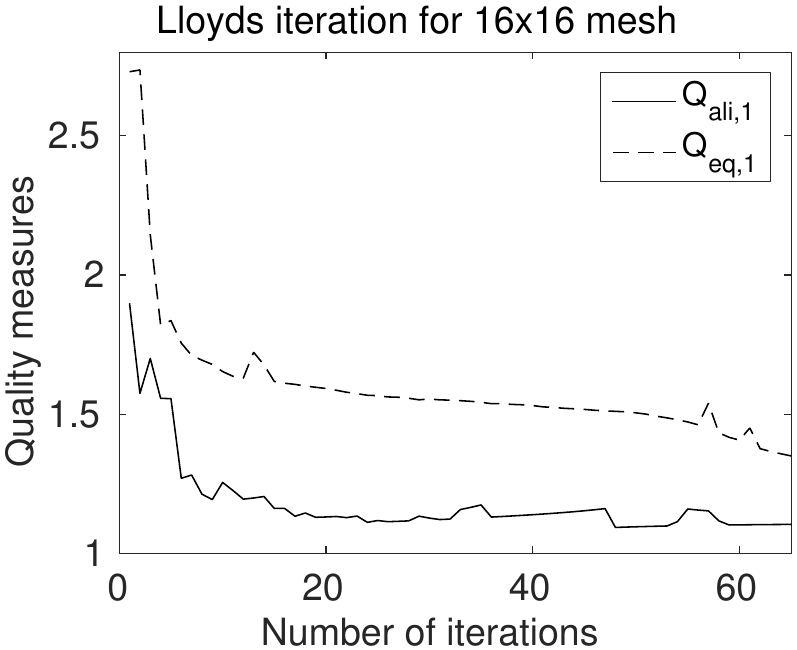} \; \includegraphics[width=4cm]{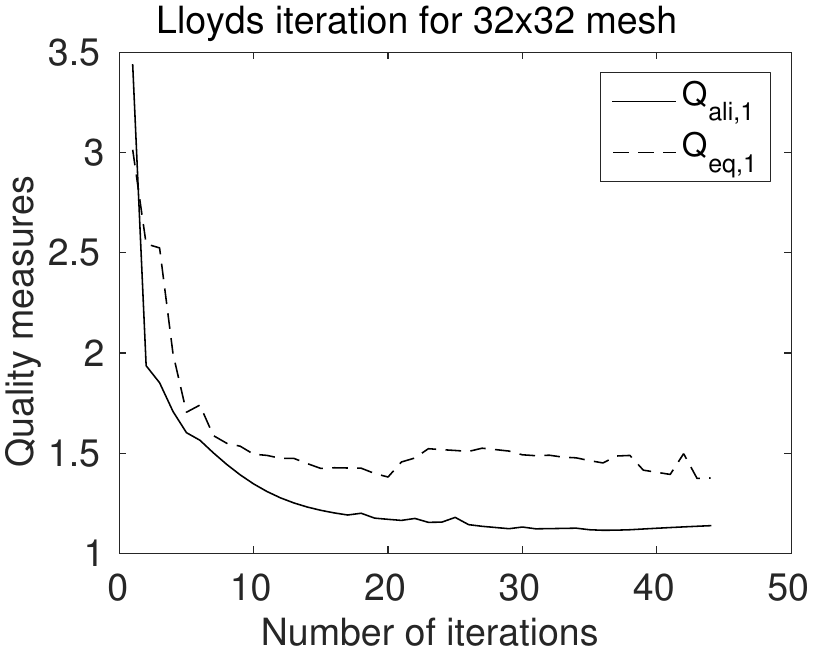}
\caption{History of $Q_{ali,1}$ and $Q_{eq,1}$ in Lloyd's iteration for meshes with $8\times 8$, $16\times 16$ and $32\times 32$ cells.}
\label{fig:LloydQC1}
\end{center}
\end{figure}

\subsection{Approximation 2: use generalized barycentric mappings}
\label{sec:oneReference}

This approximation is similar to Approximation 1 except that here we construct a specific
local mapping $\cF_T: T_C \to T$ using generalized barycentric mappings \cite{Alexa00, FloaterKosinka10, Lipman12, Schneider13}
\revZ{that} are related to generalized barycentric coordinates (GBCs)
\revZ{\cite{Floater03, Floater06, Floater15, Meyer02, Wachspress75}}.

\smallskip
\begin{definition}
\label{def:GBC}
The generalized barycentric coordinates for a given $n$-gon $T$ are the functions
$\lambda_i\,:\, T\to \bbR$, $i=1,\ldots,n$ satisfying
\begin{enumerate}
\item[(i)] (Non-negativity) All $\lambda_i(\vx)$ for $1\le i\le n$ are
non-negative on $T$;
\item[(ii)] (Linear precision) There hold
\[
\sum_{i=1}^n \lambda_i(\vx) = 1, \quad \sum_{i=1}^n \lambda_i(\vx)\vx_i = \vx,\quad
\forall \vx \in T.
\]
\end{enumerate}
\end{definition}
\smallskip

Take a set of GBCs on $T_C$ and denote it by
 $(\lambda_1(\vxi), \lambda_2(\vxi),\ldots, \lambda_n(\vxi))$.
 We can define a mapping $\cF_T$ from $T_C$ to $T$ as
 $$
 \vx = \cF_T(\vxi) \triangleq \sum_{i=1}^n \lambda_i(\vxi) \vx_i, \quad\forall \vxi\in T_C.
 $$
 Such a mapping is called a generalized barycentric mapping in literature
 and has important applications in several fields.
 An important non-trivial question is whether $\cF_T$ defines a bijection or not.
\revA{Fortunately, it has been answered positively for several types of GBCs
   \cite{Alexa00, Chen16, FloaterKosinka10, Lipman12, Schneider13, Schneider15},
including the case of piecewise linear coordinates which is discussed in the following.}


Let $\mathcal{T}_{T_C}$ be a triangulation of $T_C$.
Let $\lambda_i$, $i=1,\ldots, n$, be piecewise linear functions on $\mathcal{T}_{T_C}$ satisfying
$\lambda_i(\vxi_j) = \delta_{ij}$, \revC{where $\vxi_j$ for $j=1,\ldots, \revZ{n}$ are the vertices of $T_C$}, and $\sum_{i=1}^n \lambda_i = 1$. This defines the piecewise linear
GBCs associated with $\mathcal{T}_{T_C}$. Note that a different triangulation of $T_C$ can lead to a different set
of GBCs. Moreover, $\mathcal{T}_{T_C}$ can be a triangulation using exactly the same vertices
of $T_C$, or it can have extra vertices added as long as the extra vertices have their own and distinct
barycentric coordinates specified.
Furthermore, the piecewise linear bijection $\cF_T$ from $T_C$ to $T$ can be obtained in a different but
equivalent way: specify the same type of triangulations for $T_C$ and $T$
 and then use piecewise linear mappings to map each individual triangle in $T_C$ to the corresponding
 triangle in $T$.

\revC{In the construction of mesh quality measures, we consider the piecewise linear
barycentric mapping described above.}
Then, the Jacobian matrix $\J_T$ of the mapping $\cF_T$ is piecewise constant on each \revZ{$T$,
i.e., it is constant on each triangle of $\T_T$.}
If we also use a piecewise constant approximation of $\M$ on $T$, from
(\ref{eq:contMeasurements}) and (\ref{eq:contGlobalMeasurements}) we obtain the second set of
mesh quality measures as
\begin{align}
& Q_{ali,2}  = \max_{T \in \T} \max_{K \in \T_T}
\frac{\trace ((\J_T|_K)^t \M_K \J_T|_K ) }{2 \det ((\J_T|_K)^t \M_K \J_T|_K )^{1/2}},
\label{ali-2} \\
& \revi{ Q_{eq,2} = \max_{T \in \T} \max_{K \in \T_T}
 \frac{\det( \J_T|_K ) \sqrt{\det(\M_K)} }{ \sigma_{h,2}} , }
\label{eq-2}
\end{align}
where $\J_T|_K$ is the restriction of $\J_T$ on $K$,
\begin{equation}
\label{sigma-2}
\revi{ \sigma_{h,2} = \frac{1}{N_{tri}} \sum_{T \in \T} \sum_{K \in \T_T} \det( \J_T|_K ) \sqrt{\det(\M_K)}, }
\end{equation}
and $N_{tri}$ is the total number of the triangles in the mesh.

\revi{Similar to Section \ref{sec:app1}}, we test the second set of the mesh quality measures on the same \revB{Lloyd's iterations}
shown in Fig.~\ref{fig:LloydMesh8}.
Again, each $n$-gon is compared with a \revi{unitary} regular reference $n$-gon. There are two easy ways to define the triangular subdivision on $T$ (as well as on $T_C$):
\revZ{
\begin{itemize}
\item Subdivision (a): connect $\vx_1$ with all other vertices of $T$ and get a triangulation; 
\item Subdivision (b): connect all vertices to the arithmetic center $\vx_T$. The arithmetic center lies inside $T$ when
$T$ is convex.
\end{itemize}
}
We test both subdivisions.
The results are reported in Figs.~\ref{fig:LloydQC21} and \ref{fig:LloydQC22}.

\revi{
\begin{remark}\label{rem:Q2anchor}
In the subdivision (a), instead of choosing $\vx_1$, one can choose any $\vx_i$ and connect it with other vertices.
Similar to Section  \ref{sec:app1}, we call this chosen vertex $\vx_i$ the ``anchor point" of subdivison (a).
But unlike the case of the first set of mesh quality measures in which the choice of anchor point has negligible effects,
for the second set of quality measures with subdivision (a), the effect of the anchor point,
although not large enough to alter the asymptotic orders, is no longer negligible.
Numerical results exploring this phenomenon will be presented in Section \ref{sec:compareMeasures}.
\end{remark}

\begin{remark}
  One may choose other barycentric mappings, associated with different types of GBCs, to define $Q_{ali,2}$ and $Q_{eq,2}$.
  The framework following (\ref{eq:contMeasurements})-(\ref{eq:contGlobalMeasurements}) still works.
  However, the calculation of the Jacobian matrix may not be as simple as for the current choice.
\end{remark}
}

\begin{figure}[ht]
\begin{center}
\includegraphics[width=4cm]{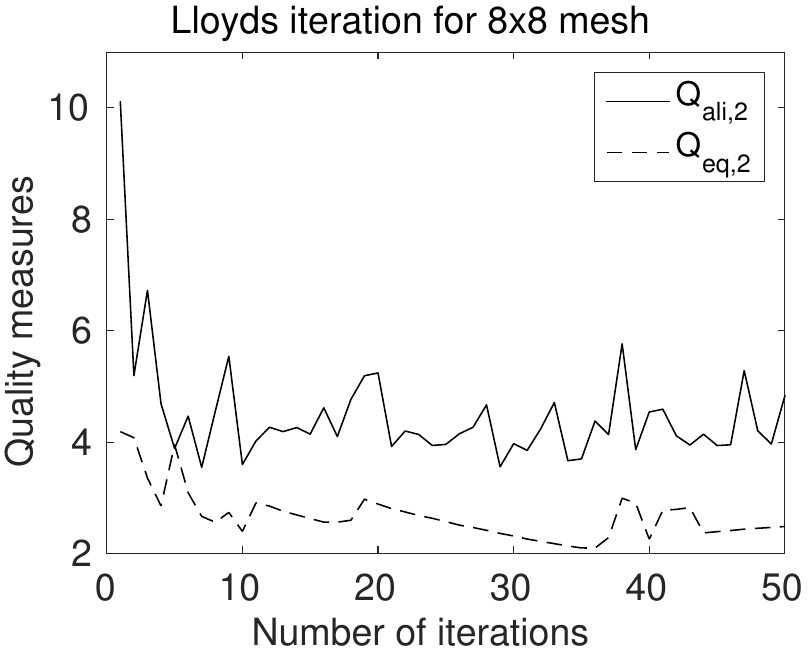} \; \includegraphics[width=4cm]{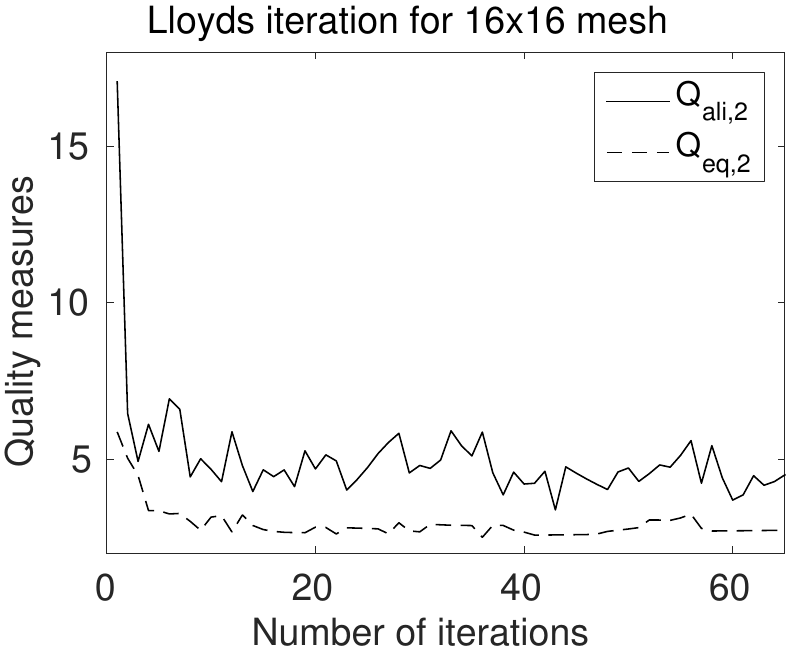} \; \includegraphics[width=4cm]{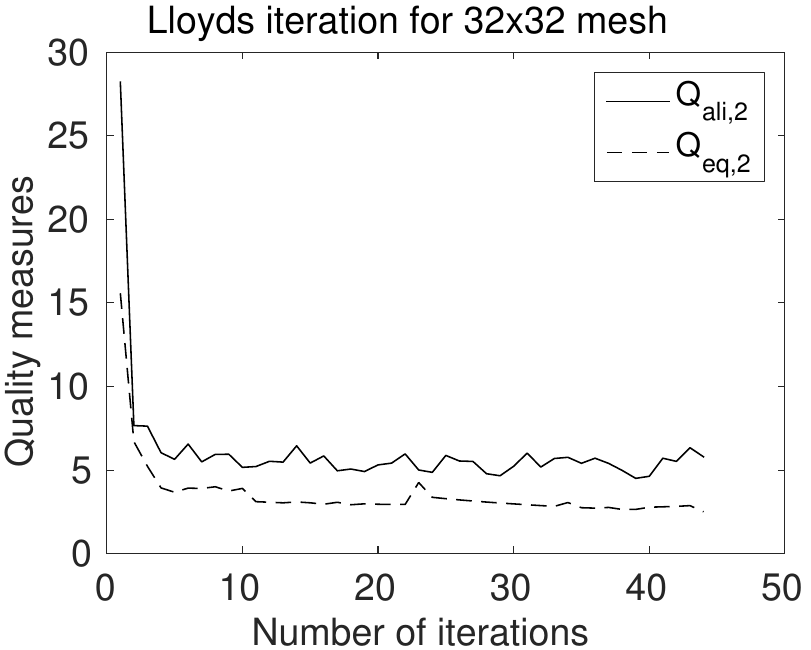}
\caption{History of $Q_{ali,2}$ and $Q_{eq,2}$ in Lloyd's iteration for meshes with $8\times 8$, $16\times 16$ and $32\times 32$ cells. Here, the triangular subdivision (a) is used.}
\label{fig:LloydQC21}
\end{center}
\end{figure}

\begin{figure}[ht]
\begin{center}
\includegraphics[width=4cm]{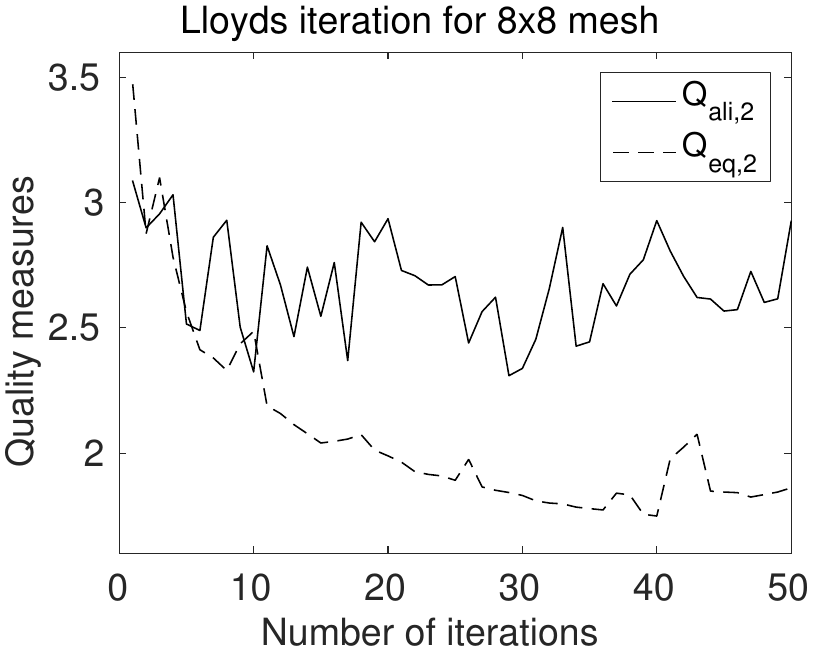} \; \includegraphics[width=4cm]{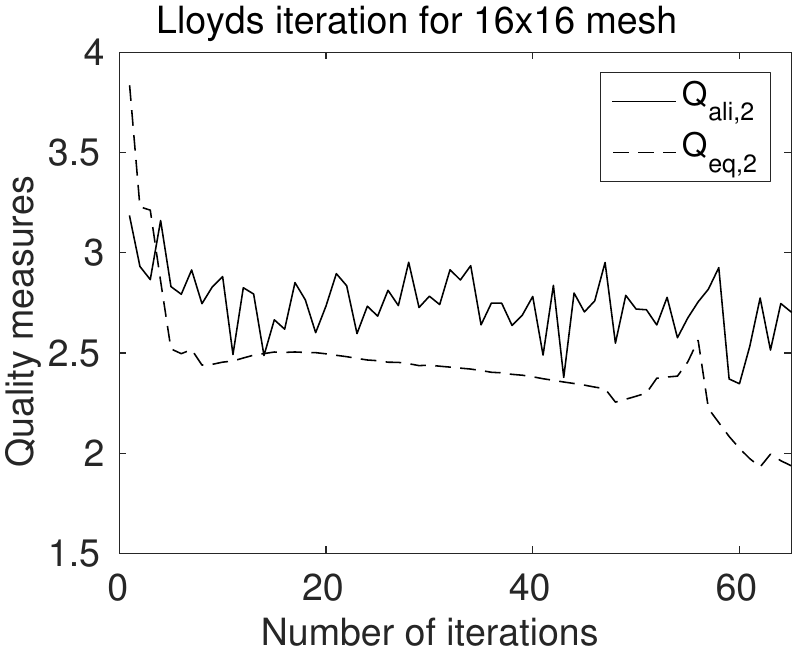} \; \includegraphics[width=4cm]{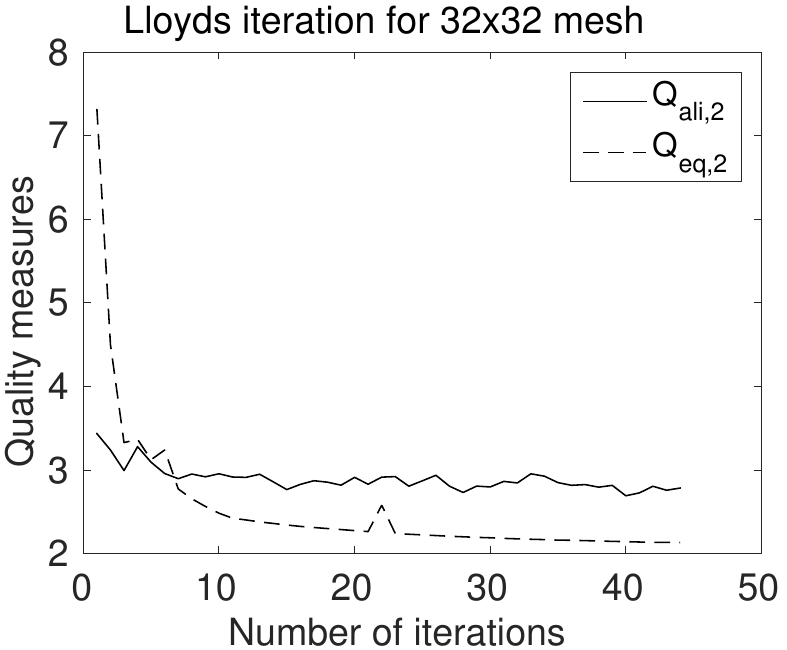}
\caption{History of $Q_{ali,2}$ and $Q_{eq,2}$ in Lloyd's iteration for meshes with $8\times 8$, $16\times 16$ and $32\times 32$ cells. Here, the triangular subdivision (b) is used.}
\label{fig:LloydQC22}
\end{center}
\end{figure}

\revB{In Figs.~\ref{fig:LloydQC21}-\ref{fig:LloydQC22}, $Q_{ali,2}$ and $Q_{eq,2}$ have similar overall
decreases as the mesh quality measures in Approximation 1,  but more dramatic drops at the first few iterations. 
The decreasing is not monotone \revZ{and} can be a bit rough in the case of $Q_{ali,2}$.} 
One may notice that the values of $Q_{ali,2}$ and $Q_{eq,2}$ are generally
greater than $Q_{ali,1}$ and $Q_{eq,1}$ and more sensitive to quality changes (with more and stronger
oscillations). This means that those Voronoi meshes have worse quality 
\revB{when measured in $Q_{ali,2}$, $Q_{eq,2}$ than in $Q_{ali,1}$, $Q_{eq,1}$.}
The reason why $Q_{ali,2}$ and $Q_{eq,2}$ are pickier is due to
their construction: they favor triangulations that look good for the selected piecewise generalized
barycentric mapping. To explain this,  we examine a random $n$-gon $T$ in a CVT,
as shown in Fig.~\ref{fig:piecewisemapping}.
Although $T$ is considered as of good shape by Lloyd's algorithm, i.e., its generator is close to its barycenter,
$T$ may still contain short edges. When both the regular reference $n$-gon and $T$ are cut into triangles which
are compared one-by-one, the triangles associated with the short edges have a bad shape.

It is also interesting to point out that subdivision (b) gives smaller  $Q_{ali,2}$ and $Q_{eq,2}$
than subdivision (a).

\begin{figure}[ht]
\begin{center}
\includegraphics[width=6cm]{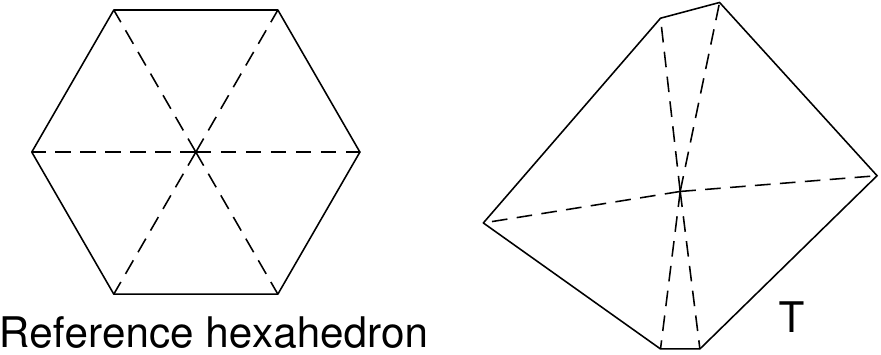}
\caption{Piecewise linear barycentric mapping from a regular reference hexahedron to $T$ using triangular subdivision (b).
Because of the short edges in $T$, some sub-triangles of $T$ can have bad shape, which affects the value of $Q_{ali,2}$.}
\label{fig:piecewisemapping}
\end{center}
\end{figure}

\subsection{\revC{Approximation 3: employ a special design that uses affine $\cF_T$}}
\label{sec:infinitelyManyReferences}

This approximation is different from the previous two.
\revC{
  It relies on the observation that an arbitrary $n$-gon can be viewed as a projection of a high dimensional simplex.
Such a connection has been known in geometry and discrete mathematics
(e.g., see \cite[Chapter 0]{Ziegler}).
Here we use it to analyze polygonal mesh quality.}

Consider a set of GBCs $\lambda_i$, $i=1, ..., n$, on $T\in\T$. Recall that
\begin{equation} \label{eq:gbcToxy}
\vx = \sum_{i=1}^n \lambda_i \vx_i,\quad \forall \vx \in T
\end{equation}
where $\vlambda = [\lambda_1,\lambda_2,\cdots,\lambda_n]^t$ lies in the set
\[
S_{n-1} = \{ [\lambda_1,\,\lambda_2,\ldots,\lambda_n]^t \in \bbR^n\,\bigg|\, \sum_{i=1}^n \lambda_i = 1\textrm{ and }
  \lambda_i\ge 0,\; 1\le i\le n\}.
\]
The set $S_{n-1}$ is a regular $(n-1)$-simplex in $\bbR^n$, meaning that it has
complete symmetry with regard to transitions over vertices, edges, and higher dimensional faces, etc.
\revC{$S_{n-1}$ is contained in an ($n-1$)-dimensional hyperplane $\sum_{i=1}^n \lambda_i = 1$ in $\bbR^n$.}

\revD{Equation (\ref{eq:gbcToxy}) can be viewed as a linear mapping from $S_{n-1}$ onto $T$,
with the center $\vlambda_c \triangleq (\frac{1}{n},\,\frac{1}{n},\ldots,\frac{1}{n})\in S_{n-1}$ mapped to the arithmetic center $\vx_T$ of $T$.
Without loss of generality, we assume that $\vx_T$ is located at the origin. 
If not, by noticing that (\ref{eq:gbcToxy}) can be rewritten as
$$
\vx - \vx_T = \sum_{i=1}^n \lambda_i (\vx_i-\vx_T),
$$
one can easily shift $\vx_T$ to the origin by introducing a coordinate translation $\hat{\vx} = \vx-\vx_T$.
}

The mapping from $S_{n-1}$ to $T$ can also be written as
\begin{equation}
\label{eq:gbcToxy2}
S_{n-1} \stackrel{B_T}{\longrightarrow} T \qquad\textrm{where} \qquad  B_T \vlambda \triangleq
\begin{bmatrix}\vx_1&\vx_2&\cdots&\vx_n\end{bmatrix} \vlambda .
\end{equation}
  Here we conveniently use $B_T$ to denote both the name of the linear mapping
  and the matrix $\begin{bmatrix}\vx_1&\vx_2&\cdots&\vx_n\end{bmatrix}$.
  Similar usage will be employed in the rest of this subsection.
Clearly, one has
$$
\begin{bmatrix} 0\\ 0\end{bmatrix} = \vx_T = B_T \vlambda_c.
$$

When $n=3$, (\ref{eq:gbcToxy2}) together with the constraint $\sum_{i=1}^n \lambda_i = 1$ gives a uniquely solvable $3\times 3$ linear system for non-degenerate $T$,
$$
\begin{bmatrix} B_T \\ \mathbf{1}^t\end{bmatrix} \vlambda \triangleq
\begin{bmatrix}\vx_1&\vx_2& \vx_3 \\ 1 & 1 &  1 \end{bmatrix} \vlambda =
\begin{bmatrix} \vx \\ 1\end{bmatrix},
$$
i.e., the linear mapping is invertible.
But for $n>3$, the mapping $B_T$ is not invertible as there is a non-trivial kernel. 
This can also be explained by the fact that $T$ is a 2-manifold while $S_{n-1}$ is an $(n-1)$-manifold.

Consider the singular value decomposition
of the matrix $B_T  \in \bbR^{2\times n}$,
$$
B_T = U_T\Sigma_T V_T^t,
$$
where
$$
\begin{aligned}
U_T &= [\vu_{1,T}\; \vu_{2,T}]\in \bbR^{2\times 2},\quad
&\Sigma_T = \begin{bmatrix}\sigma_{1,T}& 0 & 0 & \cdots & 0 \\ 0 & \sigma_{2,T} & 0 & \cdots & 0\end{bmatrix} \in \bbR^{2\times n}, \\
  V_T &= [\vv_{1,T}\; \vv_{2,T}\; \cdots\, \vv_{n,T}] \in \bbR^{n\times n},&
  \end{aligned}
  $$
with $\sigma_{1,T}\ge \sigma_{2,T}$ being the singular values, and $U_T$ and $V_T$ being orthogonal matrices.
 The singular values are non-zero
 as long as the polygon $T$ does not degenerate into a line segment.
  We further decompose
  $$ \Sigma_T = \begin{bmatrix}\sigma_{1,T}& 0 \\ 0 & \sigma_{2,T} \end{bmatrix}
    \begin{bmatrix} 1& 0 & 0 & \cdots & 0 \\ 0 & 1 & 0 & \cdots & 0\end{bmatrix}  \triangleq D_T Q_T .
  $$
  Then, the matrix $B_T$ can \revB{be rewritten as}
   $$
   B_T= (U_TD_TU_T^t) (U_TQ_TV_T^t) \triangleq \J_T\, P_T,
   $$
i.e., the linear mapping $B_T$ is decomposed into two linear mappings,
  defined by matrices $P_T$ and $\J_T$, respectively. An illustration is given in Fig.~\ref{fig:SimplexToPolygon},
  with details explained below:

 \begin{figure}[ht]
 \begin{center}
\vcenteredhbox{\includegraphics[width=2.5cm]{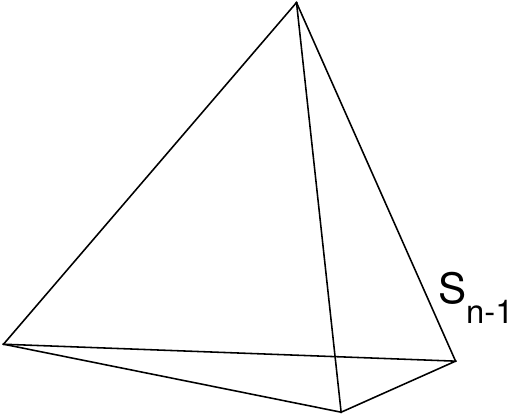}} \;$\xrightarrow{\quad P_T\quad}$\;
\vcenteredhbox{\includegraphics[width=2.5cm]{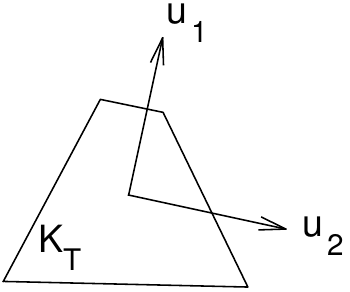}} \;$\xrightarrow{\quad\J_T\quad}$\;
\vcenteredhbox{\includegraphics[width=2.5cm]{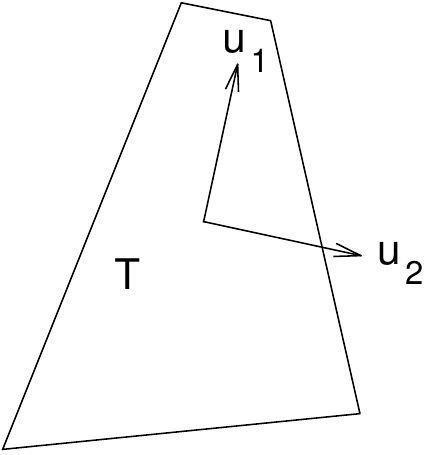}}
 \caption{Illustration of the linear mapping from simplex $S_{n-1}$ to a polygon, which is decomposed into two linear mappings.} \label{fig:SimplexToPolygon}
 \end{center}
 \end{figure}
  
\begin{itemize}
\item Note that 
\[
P_T=U_TQ_TV_T^t = U_T\begin{bmatrix} \vv_{1,T}^t \\ \vv_{2,T}^t \end{bmatrix} .
\]
Obviously, $Q_TV_T^t$ defines an orthogonal projection from $S_{n-1}$  onto the plane spanned by $\vv_{1,T}$ and $\vv_{2,T}$.
Therefore $P_T$ describes an orthogonal projection followed by a 2D rotation/reflection $U_T$.
The matrix $P_T$ maps the simplex $S_{n-1}$ into an $n$-gon (denoted by $K_T$), i.e.,
 \begin{equation}
 \label{KT-1}
 K_T = U_T\begin{bmatrix} \vv_{1,T}^t \\ \vv_{2,T}^t \end{bmatrix} (S_{n-1}) .
 \end{equation}
 We emphasize that $P_T$ consists of only orthogonal projection, rotation and/or reflection, but not scaling.
 Indeed, $P_T$ can be viewed as a {\it flattening of the high-dimensional simplex into 2D}.

\item Next, a linear transformation
$\J_T = U_TD_TU_T^t$ is applied to $K_T$, which maps polygon $K_T$ into a new polygon $T=\J_T (K_T)$.
The matrix $\J_T$ is symmetric and positive definite. Thus it defines an {\it anisotropic scaling},
i.e., scaling by factors $\sigma_{1,T}$ and $\sigma_{2,T}$ in the directions of $\vu_{1,T}$ and $\vu_{2,T}$, respectively.
Polygons $T$ and $K_T$ are affine similar under the anisotropic scaling $\J_T$.
\end{itemize}

It is important to decompose $B_T$ into a {\it flattening} $P_T$ and an {\it anisotropic scaling} $\J_T$,
since we shall show later that $K_T = P_T(S_{n-1})$ has a relatively good shape and can serve as a reference $n$-gon
for polygon $T$. Moreover, since polygons $T$ and $K_T$ are affine similar to each other,
we get an affine mapping $\cF_T(\vxi) \triangleq \J_T \vxi$ from the reference polygon $K_T$ to polygon $T$.

We first list three obvious propositions.
   \medskip

   \begin{proposition}
   \label{prop:KTconvex}
   $T$ is convex if and only if $K_T$ is convex.
   \end{proposition}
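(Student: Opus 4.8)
The plan is to leverage the affine similarity between $T$ and $K_T$ noted immediately before the statement. From the factorization $B_T = \J_T P_T$, the $i$-th vertex of $K_T$ is the $i$-th column $\vxi_i$ of $P_T$ and the $i$-th vertex of $T$ is the $i$-th column $\vx_i$ of $B_T$, so that $\vx_i = \J_T \vxi_i$ for every $i$, with the cyclic vertex order preserved. The map $\J_T = U_T D_T U_T^t$ is symmetric with singular values $\sigma_{1,T}, \sigma_{2,T} > 0$; hence it is invertible, $\det(\J_T) = \sigma_{1,T}\sigma_{2,T} > 0$ so it is orientation preserving, and $\J_T^{-1}$ is again symmetric positive definite. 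The proposition thus reduces to the assertion that an invertible, orientation-preserving linear map sends a convex polygon (with vertices in cyclic order) to a convex polygon, and conversely.

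To prove this I would use the turning characterization of convexity: a polygon with vertices $\vp_1, \ldots, \vp_n$ listed in cyclic order is convex if and only if the successive edge cross products $(\vp_{i+1} - \vp_i) \times (\vp_{i+2} - \vp_{i+1})$ all carry the same sign, where $\va \times \vb = a_1 b_2 - a_2 b_1$ and indices are taken modulo $n$. The non-degeneracy assumption on mesh polygons guarantees these quantities are nonzero, so each turn is strictly signed. The key computation is then the elementary identity $(\J_T \va)\times(\J_T \vb) = \det(\J_T)\,(\va \times \vb)$. Since the edges transform as $\vx_{i+1} - \vx_i = \J_T(\vxi_{i+1} - \vxi_i)$, each turning cross product of $T$ equals $\det(\J_T)$ times the corresponding one of $K_T$; as $\det(\J_T) > 0$, the two sign patterns coincide. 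Therefore all turns of $T$ agree in sign exactly when all turns of $K_T$ do, which gives both implications simultaneously (the reverse direction may equally be read off by applying the same identity to the symmetric positive definite map $\J_T^{-1}$ carrying $T$ back to $K_T$).

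The genuinely delicate point is not the computation but the correct reading of $K_T$: it must be understood as the polygon formed by joining the columns of $P_T$ in their cyclic vertex order, \emph{not} as the full image $P_T(S_{n-1})$ of the simplex. The latter, being the image of a convex set under a linear map, is automatically convex and would render the equivalence vacuous. Once $K_T$ and $T$ are both viewed as cyclically-ordered vertex polygons linked vertex-by-vertex through the orientation-preserving affine similarity $\J_T$, the claimed equivalence follows at once from the sign invariance of the turning cross products; all remaining steps (the cross-product identity and the turning test) are routine.
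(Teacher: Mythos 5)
Your proof is correct, and it rests on the same underlying fact the paper uses, but the two arguments diverge in how much they verify. The paper disposes of this proposition (together with the companion statements on non-degeneracy and the arithmetic center) in a single line: $T=\cF_T(K_T)$ with $\cF_T(\vxi)=\J_T\vxi$ an invertible affine map, and invertible affine maps carry convex sets to convex sets in both directions (apply $\J_T^{-1}$ for the converse). You reach the same reduction --- the vertexwise correspondence $\vx_i=\J_T\vxi_i$ through the invertible symmetric positive definite factor --- but then re-prove the preservation of convexity by the discrete turning test together with the identity $(\J_T\va)\times(\J_T\vb)=\det(\J_T)\,(\va\times\vb)$. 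What each buys: the paper's abstract argument is shorter and needs no orientation hypothesis at all (convexity is preserved by any invertible affine map, orientation-reversing ones included, so $\det(\J_T)>0$ is not actually needed), while your computation is self-contained and makes the sign bookkeeping explicit. One point you should tighten: the same-sign cross-product criterion characterizes convexity only among \emph{simple} closed polygons --- a pentagram has all turns of one sign yet is not convex --- so your argument needs the additional (easy) observation that simplicity itself transfers between $T$ and $K_T$ because $\J_T$ is a homeomorphism of the plane; with that one line the turning-test route is complete. Finally, your remark about the reading of $K_T$ is a genuine sharpening the paper glosses over: as a point set, $K_T=P_T(S_{n-1})$ is the convex hull of the projected vertices and hence automatically convex, so for the ``if and only if'' to carry content one must indeed interpret $K_T$ (and $T$) as the cyclically ordered vertex polygon, exactly as you do.
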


   \begin{proposition}
   \label{prop:KTnondegenerate}
   $T$ is non-degenerate if and only if $K_T$ is non-degenerate.
   \end{proposition}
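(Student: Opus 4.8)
The plan is to reduce the claim to the affine similarity between $T$ and $K_T$ established just above. Since $T = \cF_T(K_T)$ with $\cF_T(\vxi) = \J_T \vxi$ and $\J_T = U_T D_T U_T^t$ symmetric positive definite, $\cF_T$ is an invertible linear map. The heart of the matter is the elementary fact that an invertible linear map sends lines to lines and hence preserves, in both directions, the collinearity of any triple of points. Degeneracy being defined as the existence of three collinear vertices, the property then transfers verbatim between the two polygons.

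First I would pin down the vertex correspondence. Writing $\vxi_i = P_T e_i$ for the vertices of $K_T$, where the $e_i$ are the vertices of the simplex $S_{n-1}$, the factorization $B_T = \J_T P_T$ yields $\vx_i = B_T e_i = \J_T \vxi_i$, so $\cF_T$ carries the $i$th vertex of $K_T$ to the $i$th vertex of $T$. Then, for any three indices $i,j,k$, the triple $\vx_i,\vx_j,\vx_k$ is collinear if and only if $\{\vx_j-\vx_i,\;\vx_k-\vx_i\}$ is linearly dependent; since $\vx_j-\vx_i = \J_T(\vxi_j-\vxi_i)$ and likewise for the other difference, and since $\J_T$ is invertible, this holds if and only if $\{\vxi_j-\vxi_i,\;\vxi_k-\vxi_i\}$ is linearly dependent, i.e.\ if and only if $\vxi_i,\vxi_j,\vxi_k$ are collinear. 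Thus $T$ admits three collinear vertices precisely when $K_T$ does; equivalently, $T$ is non-degenerate if and only if $K_T$ is, which is the claim.

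The one point demanding care, and indeed the only obstacle, is the invertibility of $\J_T$, on which the equivalence of the two linear-dependence conditions hinges. The eigenvalues of $\J_T$ are the singular values $\sigma_{1,T}\ge\sigma_{2,T}$ of $B_T$, so $\det\J_T = \sigma_{1,T}\sigma_{2,T}$ and invertibility amounts to $\sigma_{2,T}>0$, that is, to $T$ not collapsing onto a line segment. I would dispose of this by observing that a non-degenerate $n$-gon cannot collapse: if no three of its $n\ge 3$ vertices are collinear, then the first three already span $\bbR^2$, forcing $\mathrm{rank}(B_T)=2$ and $\sigma_{2,T}>0$. Hence, on the class of genuine (positive-area, non-degenerate) polygons considered in the paper, $\J_T$ is always nonsingular, the affine similarity is a true bijection, and the argument of the preceding paragraph applies without exception.
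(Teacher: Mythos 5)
Your proof is correct and follows exactly the route the paper intends: the paper offers no explicit argument, simply asserting that propositions \ref{prop:KTconvex}--\ref{prop:KTcenter} follow ``because $T$ and $K_T$ are affine similar to each other,'' and your write-up is precisely that argument made explicit --- collinearity of vertex triples is preserved in both directions by the invertible map $\J_T$, whose nonsingularity you rightly tie to the paper's remark that the singular values vanish only when $T$ collapses to a segment (excluded by the standing non-degeneracy assumption). Nothing further is needed.
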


   \begin{proposition} \label{prop:KTcenter}
   The arithmetic center of $K_T$ \revA{is located at the origin.}
   \end{proposition}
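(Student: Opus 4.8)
The plan is to compute the arithmetic center of $K_T$ as the average of its vertices, show that this average equals $P_T \vlambda_c$, and then deduce that this vector vanishes from the shifting convention $\vx_T = \mathbf{0}$ together with the invertibility of $\J_T$.

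First I would identify the vertices of $K_T$. The simplex $S_{n-1}$ has vertices given by the standard basis vectors $\mathbf{e}_1,\ldots,\mathbf{e}_n\in\bbR^n$ (the points at which a single $\lambda_i$ equals $1$), so the linearity of $P_T$ sends them to the columns $P_T\mathbf{e}_i$ of $P_T$. Because $K_T$ is affine similar to $T$ via the anisotropic scaling $\cF_T=\J_T$, and $T$ is a non-degenerate $n$-gon, Proposition \ref{prop:KTnondegenerate} guarantees that $K_T$ is itself a non-degenerate $n$-gon, so its $n$ vertices are exactly $P_T\mathbf{e}_1,\ldots,P_T\mathbf{e}_n$. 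Its arithmetic center is then
\[
\frac{1}{n}\sum_{i=1}^n P_T\mathbf{e}_i = P_T\left(\frac{1}{n}\sum_{i=1}^n \mathbf{e}_i\right) = P_T\vlambda_c ,
\]
using linearity of $P_T$ and $\vlambda_c=[\tfrac{1}{n},\ldots,\tfrac{1}{n}]^t$.

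It then remains to show $P_T\vlambda_c=\mathbf{0}$. Here I would invoke the factorization $B_T=\J_T P_T$ established above, which gives $B_T\vlambda_c=\J_T(P_T\vlambda_c)$. On the one hand, $B_T\vlambda_c=\sum_{i=1}^n\frac{1}{n}\vx_i=\vx_T=\mathbf{0}$ by the shifting convention introduced with the mapping (\ref{eq:gbcToxy2}). On the other hand, $\J_T=U_TD_TU_T^t$ is symmetric positive definite, hence invertible, since the singular values $\sigma_{1,T},\sigma_{2,T}$ are nonzero for a non-degenerate $T$. Multiplying $\J_T(P_T\vlambda_c)=\mathbf{0}$ by $\J_T^{-1}$ yields $P_T\vlambda_c=\mathbf{0}$, which completes the argument.

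I do not expect a genuine obstacle here; the proof reduces to bookkeeping. The one point requiring care is the identification of the vertices of $K_T$ with the projected simplex vertices $P_T\mathbf{e}_i$: a priori, projecting a high-dimensional simplex could collapse vertices or push some into the interior of the image polygon, but the affine similarity with the non-degenerate $n$-gon $T$ rules this out. An equivalent and perhaps cleaner route sidesteps this issue by arguing through $\cF_T=\J_T$ directly: since $\cF_T$ is linear (so it fixes the origin) and carries the vertices of $K_T$ bijectively onto those of $T$, it maps the arithmetic center of $K_T$ onto the arithmetic center $\vx_T=\mathbf{0}$ of $T$, and invertibility of $\J_T$ then forces the center of $K_T$ to be $\mathbf{0}$.
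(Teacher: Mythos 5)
Your proposal is correct and takes essentially the paper's approach: the paper justifies this proposition (along with Propositions \ref{prop:KTconvex} and \ref{prop:KTnondegenerate}) in one line by the affine similarity of $T$ and $K_T$ under the invertible linear map $\cF_T = \J_T$, which is precisely the ``cleaner route'' you give at the end. Your main computation --- identifying the vertices of $K_T$ with $P_T\mathbf{e}_i$ and deducing $P_T\vlambda_c = \mathbf{0}$ from $\J_T(P_T\vlambda_c) = B_T\vlambda_c = \vx_T = \mathbf{0}$ and the invertibility of $\J_T$ --- is just that same argument written out explicitly, and it is sound.
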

   \medskip

Next we show that $K_T$ has a relatively good shape \revi{ in the sense that the ratio between its outer-radius and in-radius is bounded by a modest number}.

   \begin{proposition}
   \label{prop:KTShapeRegular}
   Let $T$ be a convex $n$-gon and $K_T$ be defined in (\ref{KT-1}).
   Then, the in-radius of $K_T$ is greater than or equal to $\sqrt{\frac{1}{n(n-1)}}$,
   and the outer-radius of  $K_T$ is less than or equal to $\sqrt{\frac{n-1}{n}}$.
   \revi{Consequently, the ratio $\rho$ between the outer-radius and the in-radius of $K_T$ is bounded by
   $$
   1 \le \rho \le n-1,
   $$
   which depends on $n$ but not on the shape of $K_T$.}
   \end{proposition}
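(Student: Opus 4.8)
The plan is to exploit the fact that, up to the isometry $U_T$, the polygon $K_T$ is an orthogonal projection of the regular simplex $S_{n-1}$ onto a two-plane that is \emph{parallel to the affine hull} of $S_{n-1}$; the two radius bounds then reduce to the classical in-radius and circum-radius of a regular simplex. Writing $\pi:\bbR^n\to\bbR^2$, $\pi(\vlambda)=(\vv_{1,T}^t\vlambda,\,\vv_{2,T}^t\vlambda)^t$, we have $K_T=U_T\,\pi(S_{n-1})$ by (\ref{KT-1}). Since $U_T$ is orthogonal it is an isometry of $\bbR^2$ and preserves both radii, so it suffices to bound them for $\pi(S_{n-1})$. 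The crucial observation is that $\vv_{1,T}$ and $\vv_{2,T}$ are orthogonal to $\mathbf{1}$: the assumption $\vx_T=\mathbf{0}$ gives $B_T\mathbf{1}=\sum_{i=1}^n\vx_i=\mathbf{0}$, so $\mathbf{1}\in\ker B_T$, and because the first two singular values are nonzero, $\ker B_T=(\mathrm{span}\{\vv_{1,T},\vv_{2,T}\})^\perp$, whence $\vv_{1,T}^t\mathbf{1}=\vv_{2,T}^t\mathbf{1}=0$. Thus $\pi$ maps the simplex center $\vlambda_c=\frac1n\mathbf{1}$ to the origin (consistent with Proposition~\ref{prop:KTcenter}), and on the affine hull $\{\sum_i\lambda_i=1\}$ it is the orthogonal projection onto a two-dimensional subspace of the parallel linear space $\{\sum_i\lambda_i=0\}$.

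For the outer-radius, I would use that the circum-radius of $S_{n-1}$ is the distance from $\vlambda_c$ to a vertex $\mathbf{e}_i$, namely $\|\mathbf{e}_i-\frac1n\mathbf{1}\|=\sqrt{\frac{n-1}{n}}$, and that this is the maximal distance from $\vlambda_c$ to any point of $S_{n-1}$ since the distance-to-center function is convex and maximized at an extreme point. Orthogonal projection does not increase distances and sends $\vlambda_c$ to the origin, so every point of $\pi(S_{n-1})$ lies within distance $\sqrt{\frac{n-1}{n}}$ of the origin; the disk of that radius encloses $\pi(S_{n-1})$, giving the outer-radius bound.

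The main step is the in-radius lower bound. Here I would use that the in-radius of $S_{n-1}$ within its affine hull equals the distance from $\vlambda_c$ to a facet $\{\lambda_i=0\}$, which a short computation (projecting the gradient $\mathbf{e}_i$ onto the hull and dividing the value gap $\frac1n$ by its norm $\sqrt{\frac{n-1}{n}}$) gives as $r:=\sqrt{\frac{1}{n(n-1)}}$; hence the $(n-1)$-dimensional ball $B(\vlambda_c,r)$ is contained in $S_{n-1}$. The orthogonality $\vv_{1,T},\vv_{2,T}\perp\mathbf{1}$ is precisely what makes the projection of this ball a \emph{full} two-dimensional disk of the same radius $r$: for any $(a,b)$ with $a^2+b^2\le r^2$, the point $\vlambda_c+a\,\vv_{1,T}+b\,\vv_{2,T}$ lies in $B(\vlambda_c,r)$ (its displacement is orthogonal to $\mathbf{1}$, so it remains in the affine hull, and has norm $\le r$) and projects to $(a,b)$. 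Therefore the disk of radius $r$ about the origin is contained in $\pi(S_{n-1})$, giving the in-radius bound. The only delicate point is confirming that the projection does not shrink the inscribed ball, and the displayed orthogonality makes this transparent; I expect no real obstacle beyond correctly identifying this parallel-subspace structure.

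Finally, combining the two bounds gives $\rho\ge \sqrt{\frac{1}{n(n-1)}}\Big/\sqrt{\frac{n-1}{n}}=\frac{1}{n-1}$, while $\rho\le 1$ holds trivially since any inscribed disk sits inside any enclosing disk. As both radius bounds depend only on $n$, so does the resulting range for $\rho$.
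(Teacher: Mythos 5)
Your proposal is correct and follows essentially the same route as the paper's proof: establish that $\mathrm{span}\{\vv_{1,T},\vv_{2,T}\}$ lies in the hyperplane $\sum_i\lambda_i=0$ parallel to the affine hull of $S_{n-1}$, and then transfer the classical inscribed and circumscribed radii of the regular simplex, $\sqrt{\frac{1}{n(n-1)}}$ and $\sqrt{\frac{n-1}{n}}$, to $K_T$ through the projection $P_T$. The only differences are cosmetic: you derive the orthogonality from $\mathbf{1}\in\ker B_T$ rather than from $P_T\vlambda_c=\mathbf{0}$, and you make rigorous, via the explicit preimage $\vlambda_c+a\,\vv_{1,T}+b\,\vv_{2,T}$, the step that the paper justifies by the illustration in Fig.~\ref{fig:hyperplaneProjection} (that the projected inscribed ball is a full disk of undiminished radius).
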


   \begin{proof}
   Recall that $S_{n-1}$ lies inside the hyperplane $\sum_{i=1}^n \lambda_i = 1$.
   By Proposition \ref{prop:KTcenter} and linearity, one has
   $$\begin{bmatrix}0\\0\end{bmatrix} = P_T \vlambda_c =
    U_T \begin{bmatrix} \vv_{1,T}\cdot\vlambda_c \\   \vv_{2,T}\cdot\vlambda_c\end{bmatrix} ,
    $$
    which implies that $\vv_{1,T}\cdot\vlambda_c = \vv_{2,T}\cdot\vlambda_c = 0$.
    Consequently, vectors in $\text{span}\{\vv_{1,T},\, \vv_{2,T}\}$ are orthogonal to
    $\vlambda_c= [\frac{1}{n},\ldots,\frac{1}{n}]^t$, which is also the normal direction of
    the hyperplane $\sum_{i=1}^n \lambda_i = 0$.
    In other words, the 2D plane spanned by $\vv_{1,T}$ and $\vv_{2,T}$
    lies inside the hyperplane $\sum_{i=1}^n \lambda_i = 0$,
    which is parallel to the hyperplane $\sum_{i=1}^n \lambda_i = 1$.
    This is important as it guarantees that the orthogonal projection of any ball inside the hyperplane
    $\sum_{i=1}^n \lambda_i = 1$  into $\text{span}\{\vv_{1,T},\, \vv_{2,T}\}$
    must be a circular disk, as illustrated in Fig.~\ref{fig:hyperplaneProjection}.

    \begin{figure}[ht]
    \begin{center}
    \includegraphics[width=8cm]{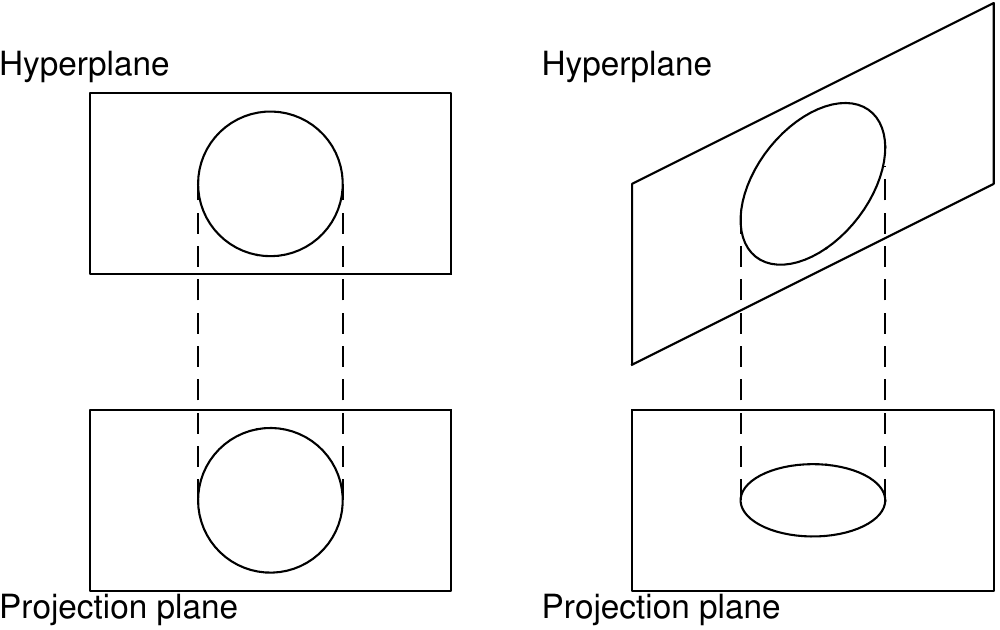}
    \caption{Orthogonal projection from a hyperplane to a 2D plane. Left:
    If the 2D plane lies inside a parallel hyperplane, then projection of any ball in the hyperplane gives a circular disk;
    Right: if not parallel, the projection becomes an ellipse.
    \revA{Because of 3D perspective, the ball in the hyperplane may look like an ellipse in the right panel.}} \label{fig:hyperplaneProjection}
    \end{center}
    \end{figure}

   By Proposition \ref{prop:KTconvex}, polygon $K_T$ is also convex.
   Hence the projection $P_T$ maps the inscribed ball of $S_{n-1}$ into an inner disk of $K_T$, and
   the circumscribed ball of $S_{n-1}$ into an outer disk of $K_T$. One simply needs to compute the radii of the inscribed and
   the circumscribed ball of $S_{n-1}$, which are just $\sqrt{\frac{1}{n(n-1)}}$ and $\sqrt{\frac{n-1}{n}}$.
   This completes the proof of the lemma.
   \end{proof}
    \smallskip

   \begin{remark}
   For $n>3$, the projection of the  inscribed (\revZ{resp.,} circumscribed) ball of $S_{n-1}$ under $P_T$
   is not necessarily the largest disk in $K_T$ (\revZ{resp.,} the smallest disk containing $K_T$).
    \end{remark}
    \smallskip

   Because of the uniform bound for $\rho$ stated in Proposition \ref{prop:KTShapeRegular}, $K_T$
   has a relatively good shape and can thus be used as a reference $n$-gon.
   \smallskip

\begin{figure}[ht]
\begin{center}
\includegraphics[width=8cm]{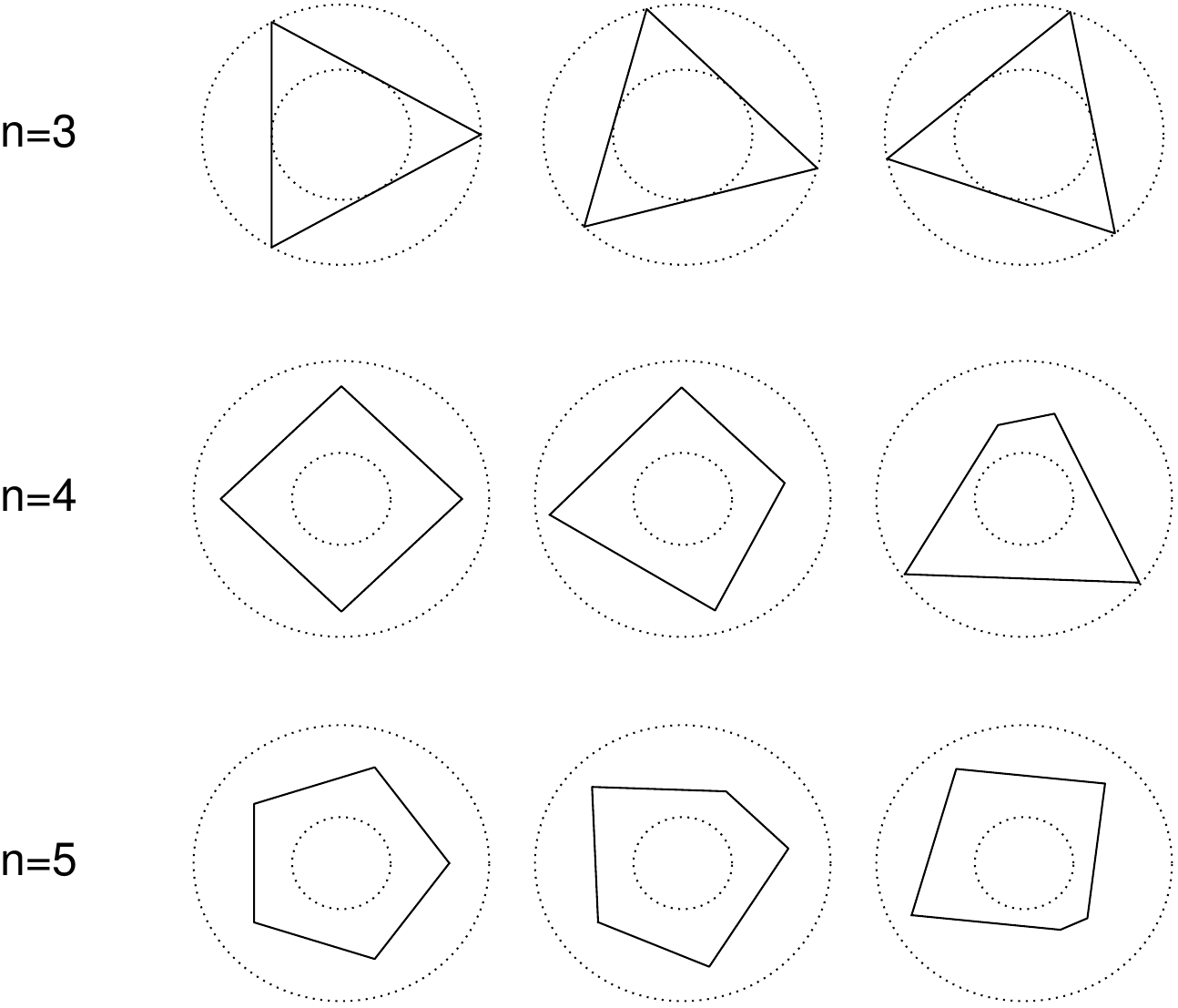}
\caption{Examples of reference polygons. The dotted circles are the projection of the inscribed and circumscribed balls of $S_{n-1}$ under $P_T$.} \label{fig:refPolygons}
\end{center}
\end{figure}

A few possible $K_T$'s are shown in Fig.~\ref{fig:refPolygons}.
   When $n=3$, the simplex $S_2$ is an equilateral triangle lying in the plane
   $\lambda_1+\lambda_2+\lambda_3=1$ and the condition $\vv_1\cdot \mathbf{1} = \vv_2 \cdot\mathbf{1}= 0$
   implies that $\text{span}\{\vv_1,\vv_2\}$ is just the plane $\lambda_1+\lambda_2+\lambda_3 = 0$.
   Thus the reference $3$-gons, or the reference triangles, are simply rotated/reflected equilateral triangles
   with edge length $\sqrt{2}$, as shown in Fig.~\ref{fig:refPolygons}.
   In other words, the  (equilateral) reference triangle is unique up to rotation/reflection.
   When $n>3$, on the other hand, the reference $n$-gons will no longer be unique even after rotation/reflection.
   However, they should all lie between two circles with radii $\sqrt{\frac{1}{n(n-1)}}$ and $\sqrt{\frac{n-1}{n}}$,
   as stated in Proposition \ref{prop:KTShapeRegular}; and they should all be convex and non-degenerate,
   as verified in propositions \ref{prop:KTconvex} and \ref{prop:KTnondegenerate}.

Now we are ready to define the mesh quality measures using the reference $n$-gons.
Recall that each $T$ has a unique $K_T$ and $\T_C$ is the collection of all those $K_T$'s.
(We do not need to compute $K_T$'s when evaluating the mesh quality measures.)
The Jacobian matrix $\J_T$ of $\cF_T$ is defined as
\begin{equation}
\J_T = U_T \begin{bmatrix} \sigma_{1,T} & 0 \\ 0 & \sigma_{2,T} \end{bmatrix} U_T^t ,
\label{JT-3}
\end{equation}
where $U_T$ and $\sigma_{1,T}$ and $\sigma_{2,T}$ are the left
singular vectors and singular values of the matrix $B_T = [\vx_1-\vx_T, ..., \vx_n-\vx_T]$.
(Here we put back the translation just for convenience.)
Using the metric tensor approximation (\ref{eq:def1M}), we can then define the mesh quality measures as
\begin{align}
& Q_{ali,3}  = \max_{T \in \T} \frac{\trace (\J_T^t \M_T \J_T) }{2 \det (\J_T^t \M_T \J_T)^{1/2}},
\label{ali-3} \\
& \revi{Q_{eq,3} = \max_{T \in \T} \frac{\det(\J_T ) \sqrt{\det(\M_T)} }{ \sigma_{h,3}} ,}
\label{eq-3}
\end{align}
where $\J_T$ is defined in (\ref{JT-3}),
\begin{equation}
\label{sigma-3}
\revi{\sigma_{h,3} = \frac{1}{N_p} \sum_{T \in \T} \det(\J_T) \sqrt{\det(\M_T)},}
\end{equation}
and $N_p$ is the number of the polygons in $\T$.

We have tested the third set of mesh quality measures numerically on the same Lloyd iterations shown
in Fig.~\ref{fig:LloydMesh8}. The results are reported in Fig.~\ref{fig:LloydQC3}.
Comparing with the numerical results for $Q_{ali,1}$ and $Q_{eq,1}$ (cf. Fig.~ \ref{fig:LloydQC1}
and Table~\ref{tab:LloydQC}), one may notice that
$Q_{ali,3}$ is slightly smaller than $Q_{ail,1}$ but $Q_{eq,3}$ is slightly larger than $Q_{eq,1}$ for the same mesh.
This is because in the computation for the first approximation, regular $n$-gons are used as reference $n$-gons,
which gives a precise control of the size of reference $n$-gons; while in the third approximation, the size of
reference $n$-gons varies a bit as can be seen in Fig.~\ref{fig:refPolygons}.
Overall, $Q_{ali,3}$ and $Q_{eq,3}$ also provide a good measure on the quality of Voronoi meshes.
A major advantage of this approximation is that all $\cF_T$'s are affine, which will be useful
in error estimation in \revB{the numerical solution of partial differential equations.}


\begin{figure}[ht]
\begin{center}
\includegraphics[width=4cm]{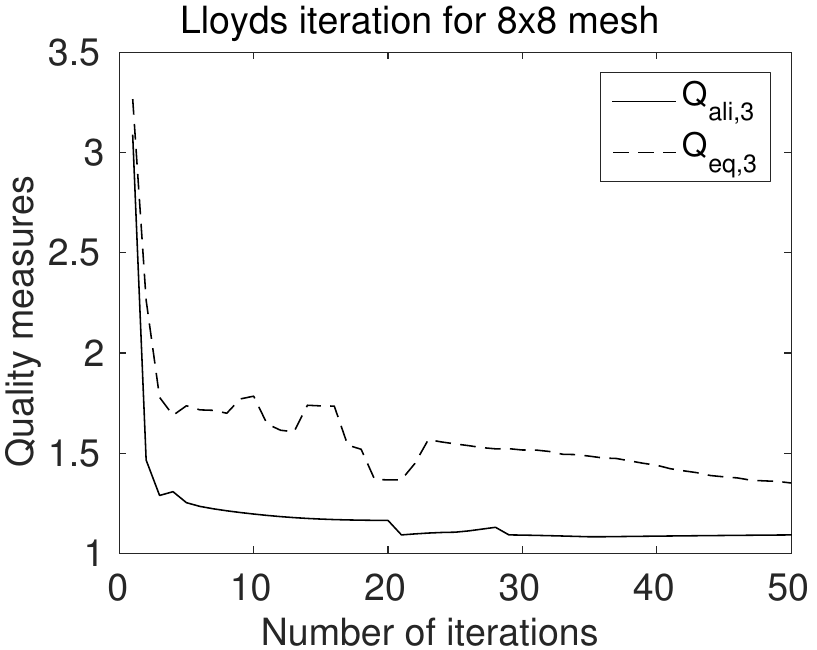} \; \includegraphics[width=4cm]{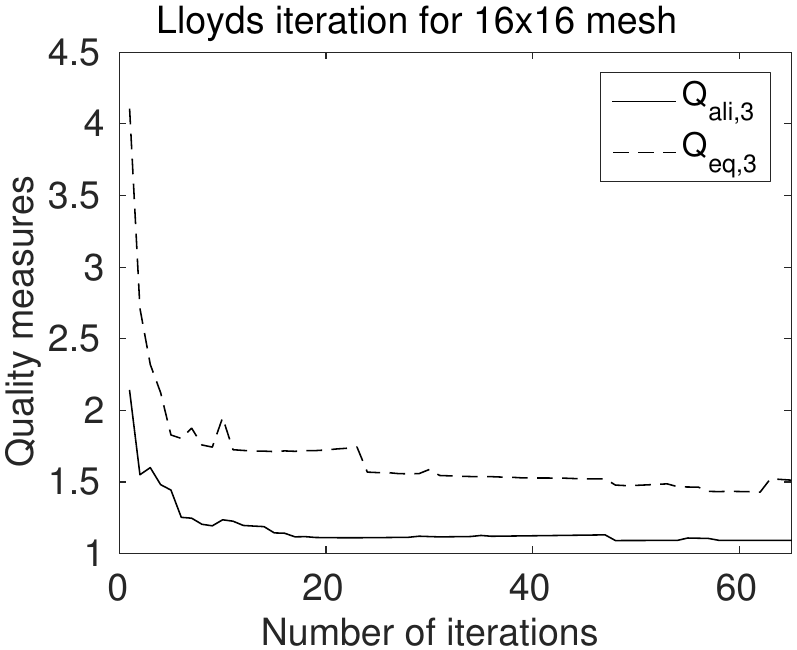} \; \includegraphics[width=4cm]{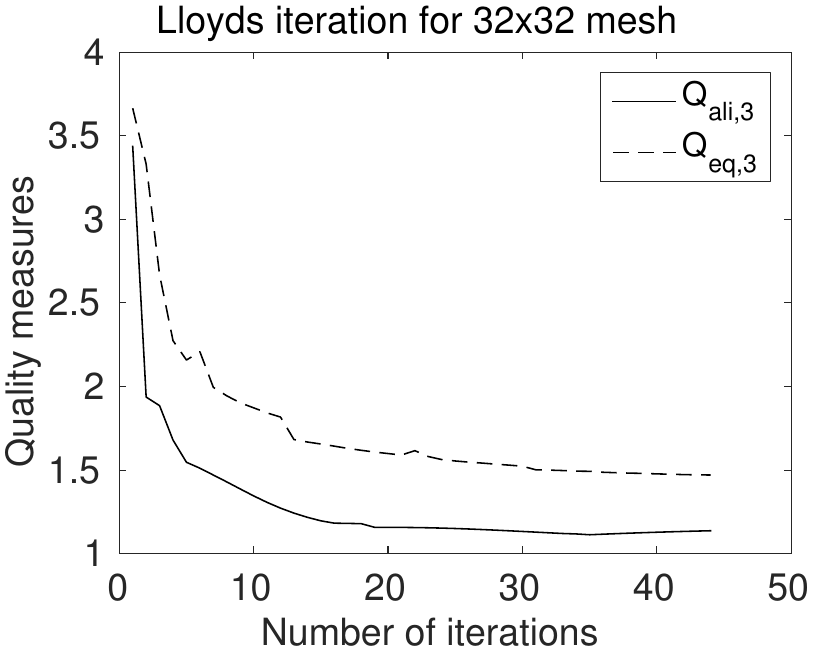}
\caption{History of $Q_{ali,3}$ and $Q_{eq,3}$ in Lloyd's iteration for meshes with $8\times 8$, $16\times 16$ and $32\times 32$ cells.}
\label{fig:LloydQC3}
\end{center}
\end{figure}

\revi{
\subsection{More numerical comparison of three mesh quality measures}
\label{sec:compareMeasures}
It is unclear to us at the moment how to compare the performance of the three sets of mesh quality measures theoretically.
For this reason, we present more numerical comparison results in this subsection.
Previously, we have compared the numerical behavior of the quality measures on the Lloyd's iteration, applied to the entire mesh.
This time the quality measures are compared on individual polygons, in order to reveal the subtle but rich differences among them.
On a single polygon $T$, following \eqref{ali-1}-\eqref{eq-1}, \eqref{ali-2}-\eqref{eq-2} and \eqref{ali-3}-\eqref{eq-3}, 
the quality measures can be written as
$$
\begin{aligned}
q_{ali,1} &= \frac{\trace (A_T^{t}\M_TA_T) } {2 \det (A_T^{t}\M_TA_T)^{1/2}}, \quad & q_{eq,1} &= \det(A_T)\sqrt{\det(\M_T)}, \quad &&\textrm{where } A_T\textrm{ is defined in \eqref{J-1} }, \\
q_{ali,2} &= \max_{K \in \T_T}\frac{\trace ( (\J_T|_K)^{t}\M_K \J_T|_K) } {2 \det ( (\J_T|_K)^{t} \M_K\J_T|_K)^{1/2}}, \quad & q_{eq,2} &= \max_{K \in \T_T}\det( \J_T|_K)\sqrt{\det(\M_K)}, 
    \quad &&\textrm{where }\J_T\textrm{ is piecewisely defined}, \\
q_{ali,3} &= \frac{\trace (\J_T^{t}\M_T\J_T) } {2 \det (\J_T^{t}\M_T\J_T)^{1/2}}, \quad & q_{eq,3} &= \det(\J_T)\sqrt{\det(\M_T)}, \quad &&\textrm{where }\J_T\textrm{ is defined in \eqref{JT-3} }.
\end{aligned}
$$
In the above we have conveniently set $\sigma_{h,1}=\sigma_{h,2}=\sigma_{h,3}=1$.
For the first two sets of quality measures, the reference polygon is set to be the unitary regular $n$-gon.

The first test is designed to check the robustness of the quality measures with respect to the choice of the ``anchor point", 
which is used in the construction of the first two quality measures; see remarks \ref{rem:Q1anchor} and \ref{rem:Q2anchor}.
To this end, we generate some random polygons using the following steps.
\begin{enumerate}
\item Generate a random convex polygon by calculating the convex hull of $20$ random points in $[0,1]\times[0,1]$. Then shift the arithmetic center of the polygon to the origin.
\item Generate a random $2\times 2$ orthogonal matrix $U$. Apply a linear transformation with the coefficient matrix
\begin{equation} \label{eq:randomU}
U^t \begin{bmatrix} a&0\\0&1\end{bmatrix} U 
\end{equation}
to the random polygon. This stretches/compresses the polygon with factor $a$ in the direction of the first row vector of $U$.
\end{enumerate}
When $a<<1$, the above process generates a random anisotropic convex polygon, aligned with a random direction and with size of roughly $a\times 1$.
In the left panel of Fig. \ref{fig:randPolygons}, we draw a random polygon with $a=0.1$. 
It can have relatively short edges, but its diameter is of $O(1)$.

\begin{figure}[ht]
\begin{center}
\includegraphics[width=4cm]{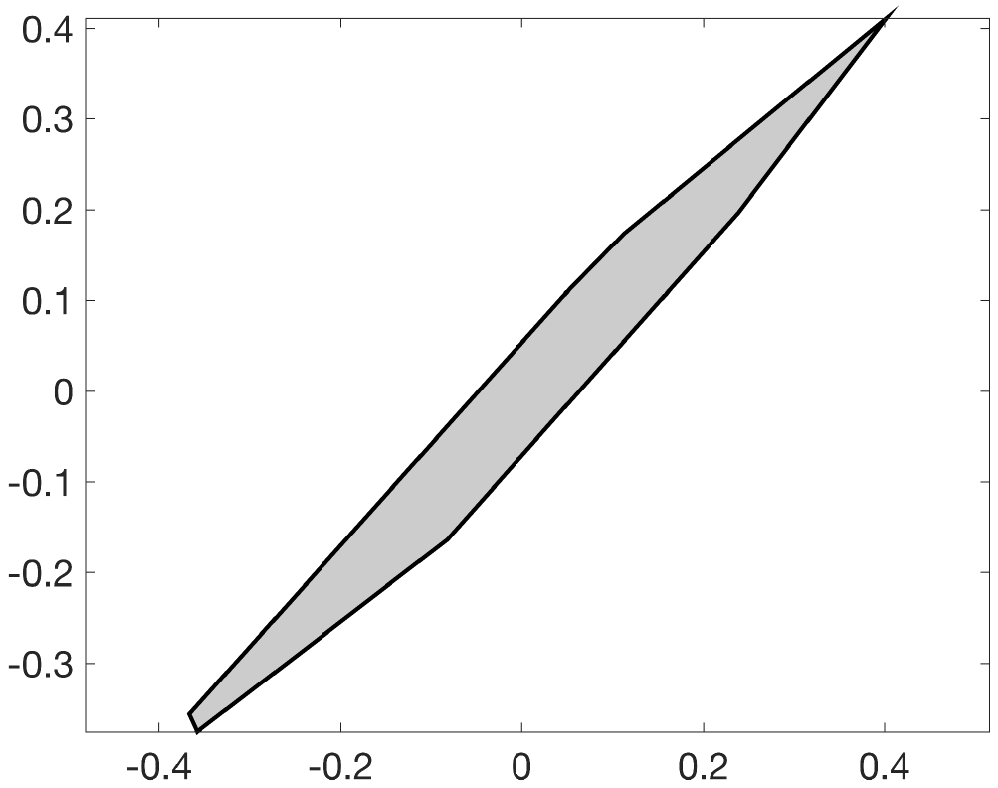} \; \includegraphics[width=4cm]{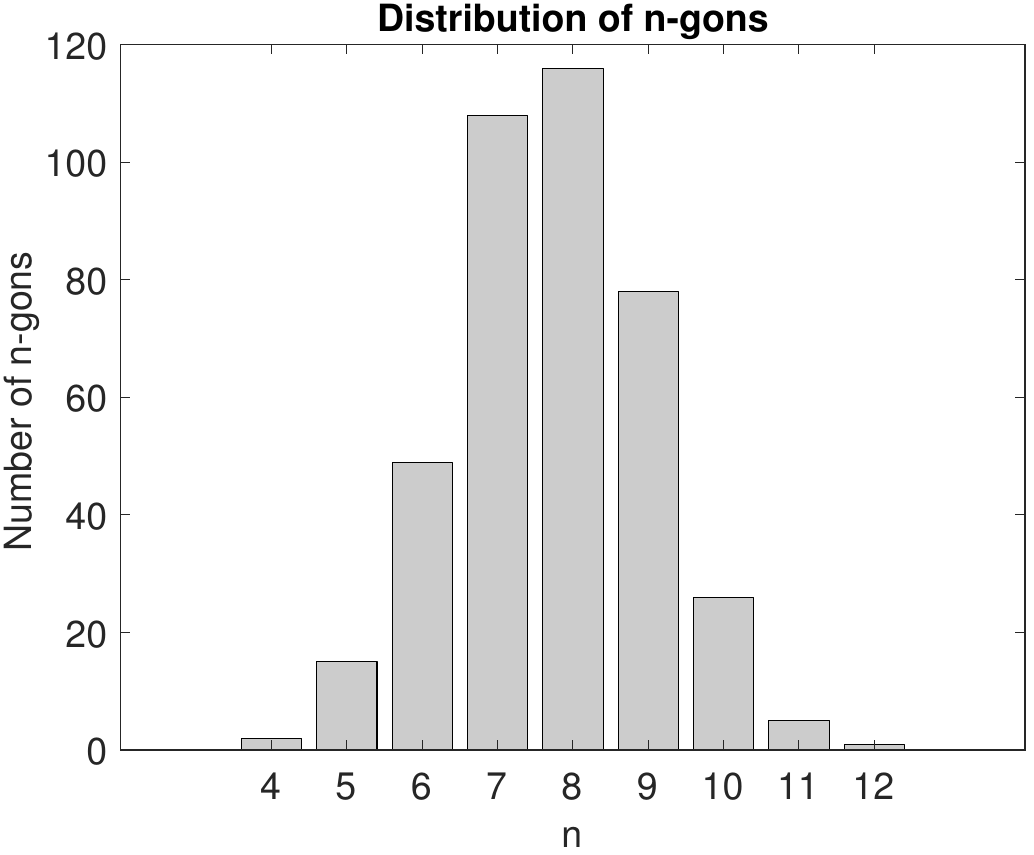} \; \includegraphics[width=4cm]{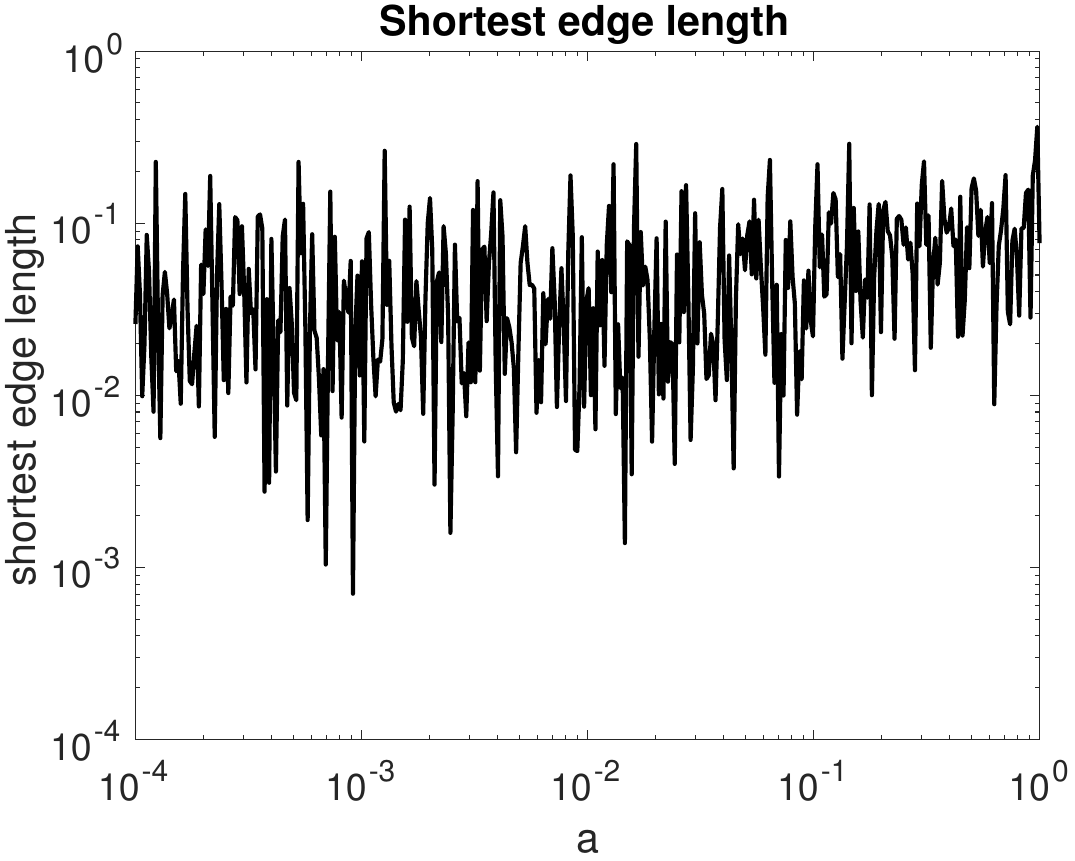}
\caption{Left: a random polygon with $a=0.1$. Middle: distribution of $n$-gons in the first test. Right: The shortest edge length vs. the value of $a$ in the first test.}
\label{fig:randPolygons}
\end{center}
\end{figure}

In the test run, we generate $400$ polygons using the process described above, with various $a\in [10^{-4},1]$. 
In the middle panel of Fig.~\ref{fig:randPolygons}, the distribution of $n$-gons appear to be normal with respect to $n$, which is reasonable because 
the polygons are generated using convex hulls.
We plot the shortest edge length of all test polygons in the right panel of Fig. \ref{fig:randPolygons}.
The majority of the shortest edge length appears to lie in $[10^{-2},10^{-1}]$, with a few going down below $10^{-3}$. However, the shortest edge length seems to have no or very weak relation to $a$.

The test polygons are highly anisotropic when $a$ is small. Therefore when measured under the Euclidean metric, they are considered having a bad shape. 
But these polygons are of good shape if measured under the correct metric
\begin{equation} \label{eq:test1metric}
\M_T=U^t\begin{bmatrix} \frac{1}{a^2}&0 \\ 0 & 1\end{bmatrix}U ,
\end{equation}
with the same $U$ used in \eqref{eq:randomU} for each random polygon $T$.
In Figs. \ref{fig:randAli}-\ref{fig:randEq} and \ref{fig:randMAli}-\ref{fig:randMEq}, we report the mesh quality measures computed using $\M_T = I$ and the metric defined in \eqref{eq:test1metric}, respectively.
In the experiment, we compute the first and second sets of mesh quality measures
with the anchor point rotating among all vertices.
The maximum and minimum values of the resulting quality measures are plotted in Figs. ~\ref{fig:randAli}-\ref{fig:randMEq}, with the regions between them shaded in gray. 
Then we compute the first quality measures using the arithmetic center as the anchor point, the second quality measures using subdivision (b), and the third quality measures.
The results are plotted in dark curves.

\begin{figure}[ht]
\begin{center}
\includegraphics[width=4cm]{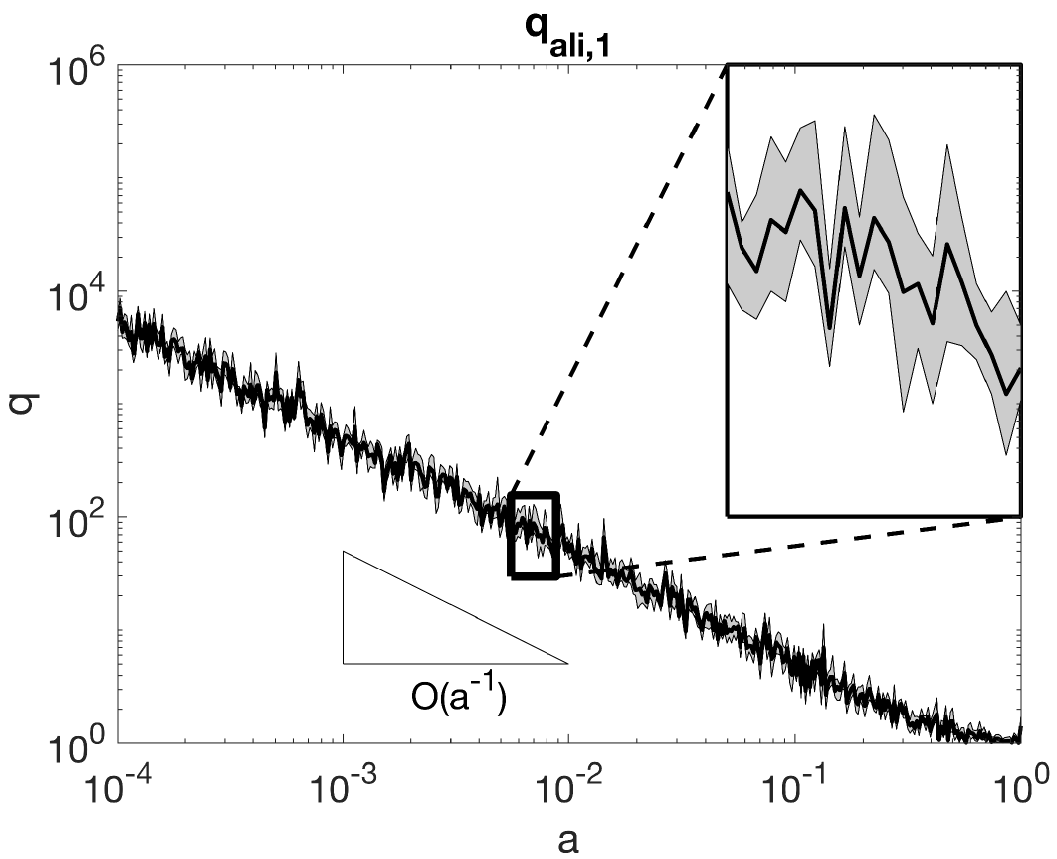} \; \includegraphics[width=4cm]{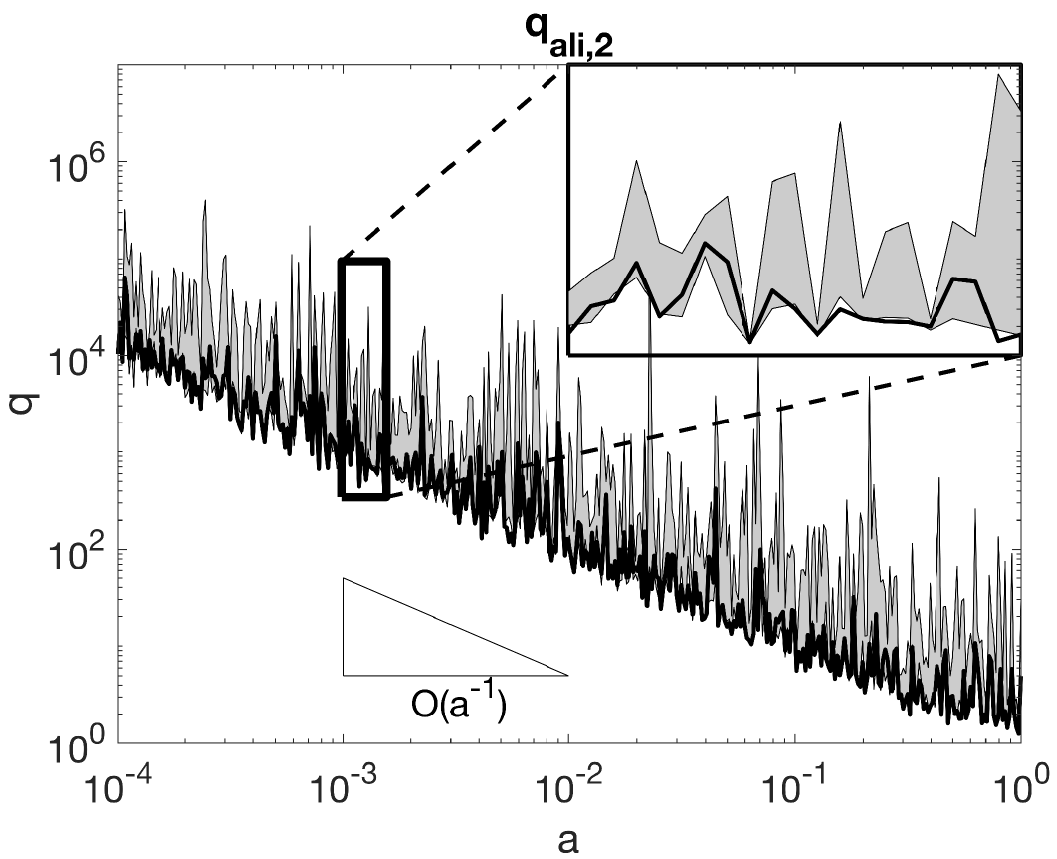} \; \includegraphics[width=4cm]{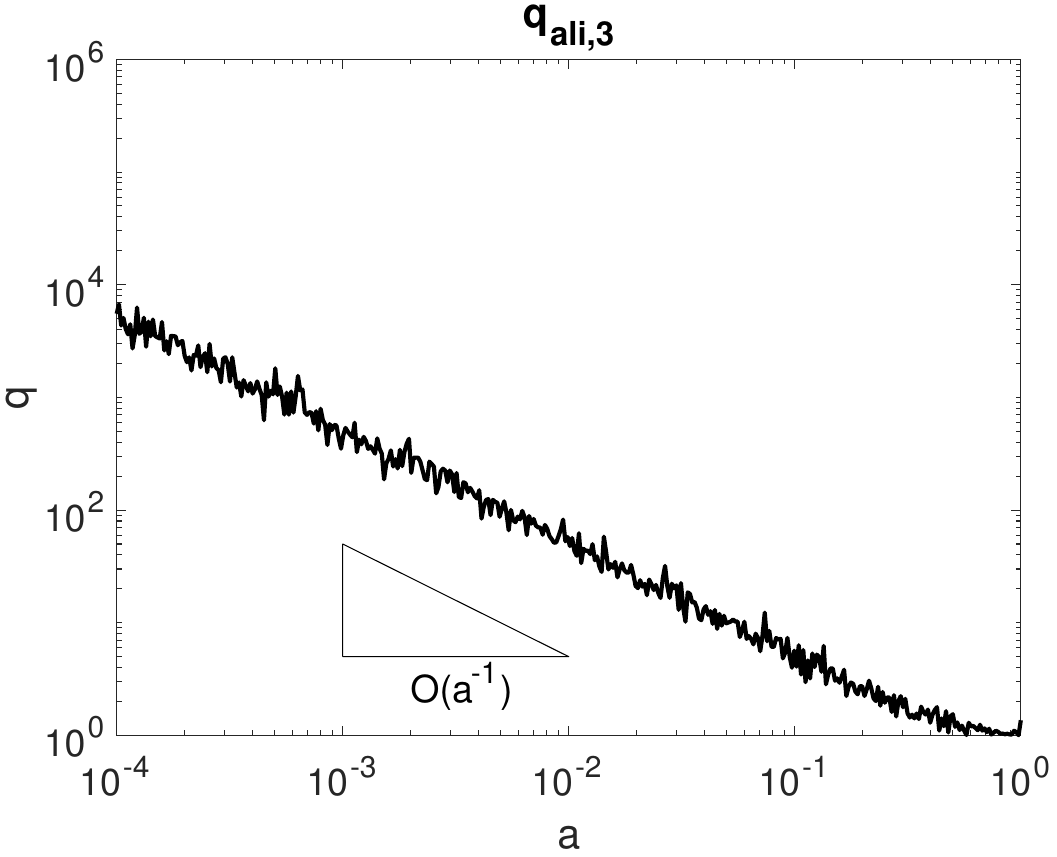}
\caption{Alignment measure vs. $a$ in the first test, computed with $\M_T=I$. In the left panel, the shaded region denotes the range of $q_{ali,1}$ with the anchor point rotating among all vertices,
while the dark curve denotes $q_{ali,1}$ using the arithmetic center as the anchor point. 
\revAA{An inset showing a magnified region is also presented.}
In the middle panel, the shaded region denotes the range of $q_{ali,2}$ using subdivision (a)
with the anchor point rotating among all vertices, while the dark curve denotes $q_{ali,2}$ using subdivision (b).
\revAA{In the right panel, the dark curve denotes $q_{ali,3}$.}
}
\label{fig:randAli}
\end{center}
\end{figure}

\begin{figure}[ht]
\begin{center}
\includegraphics[width=4cm]{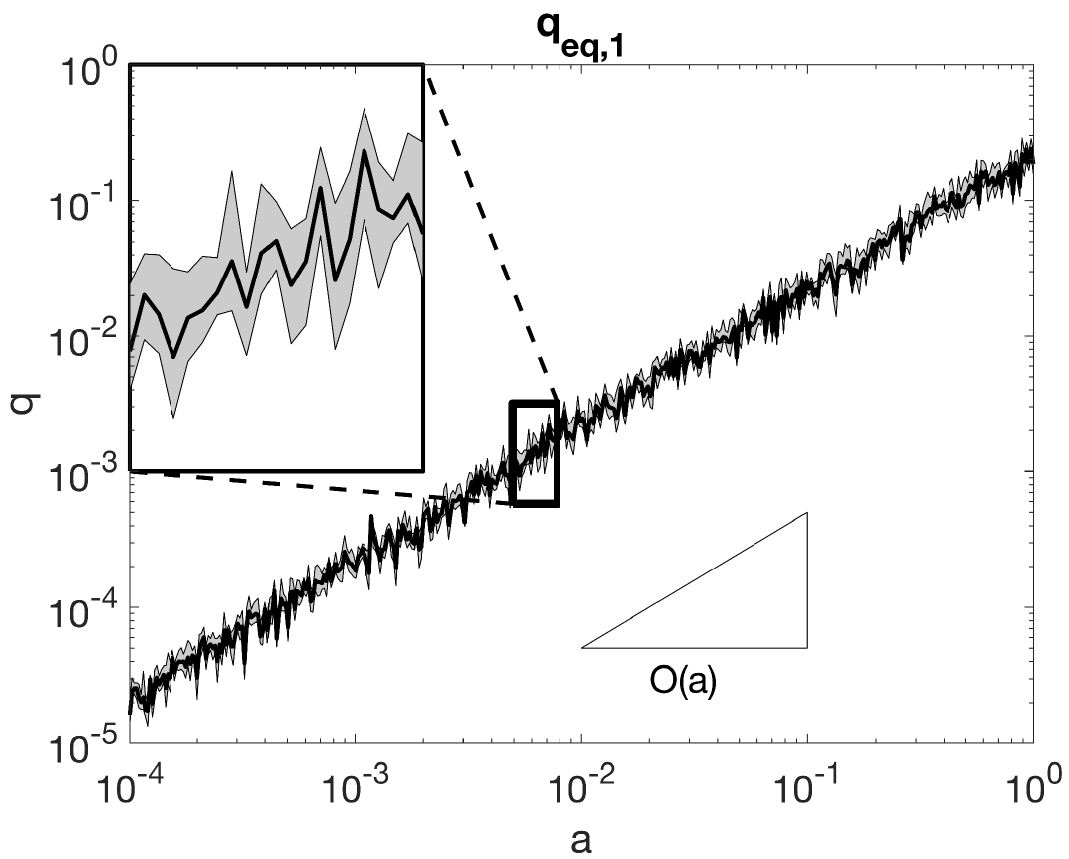} \; \includegraphics[width=4cm]{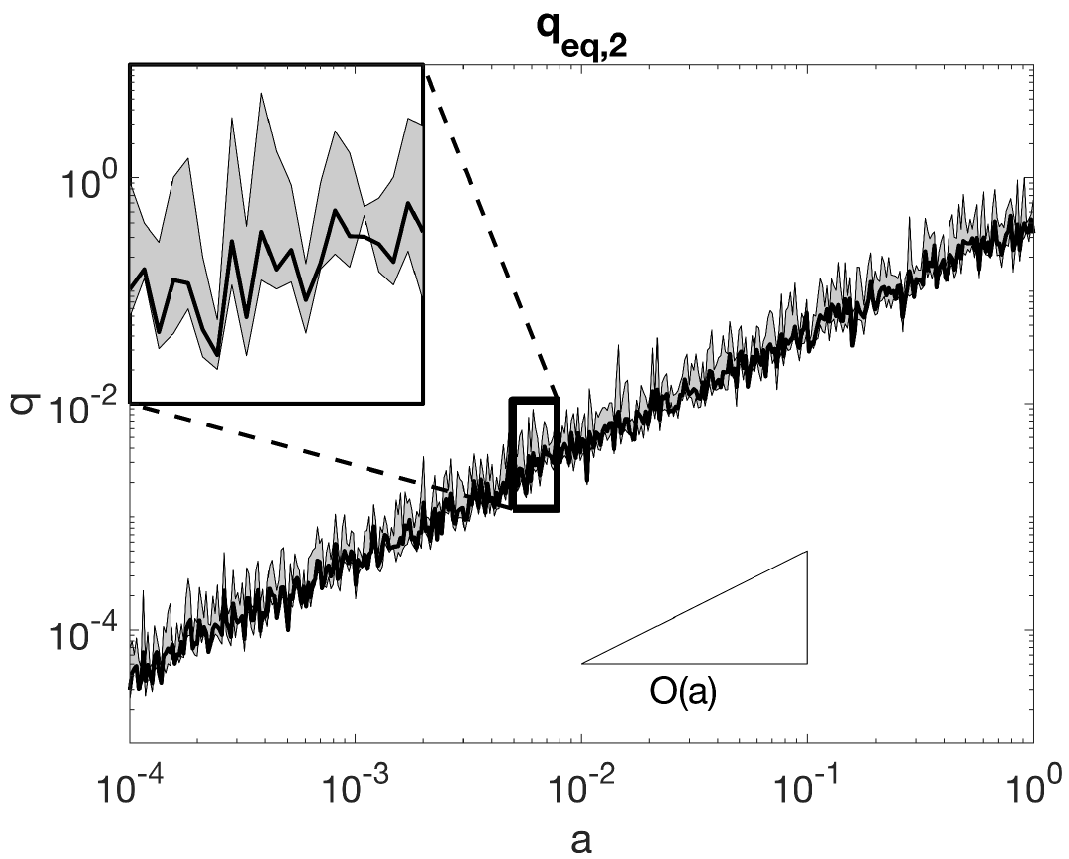} \; \includegraphics[width=4cm]{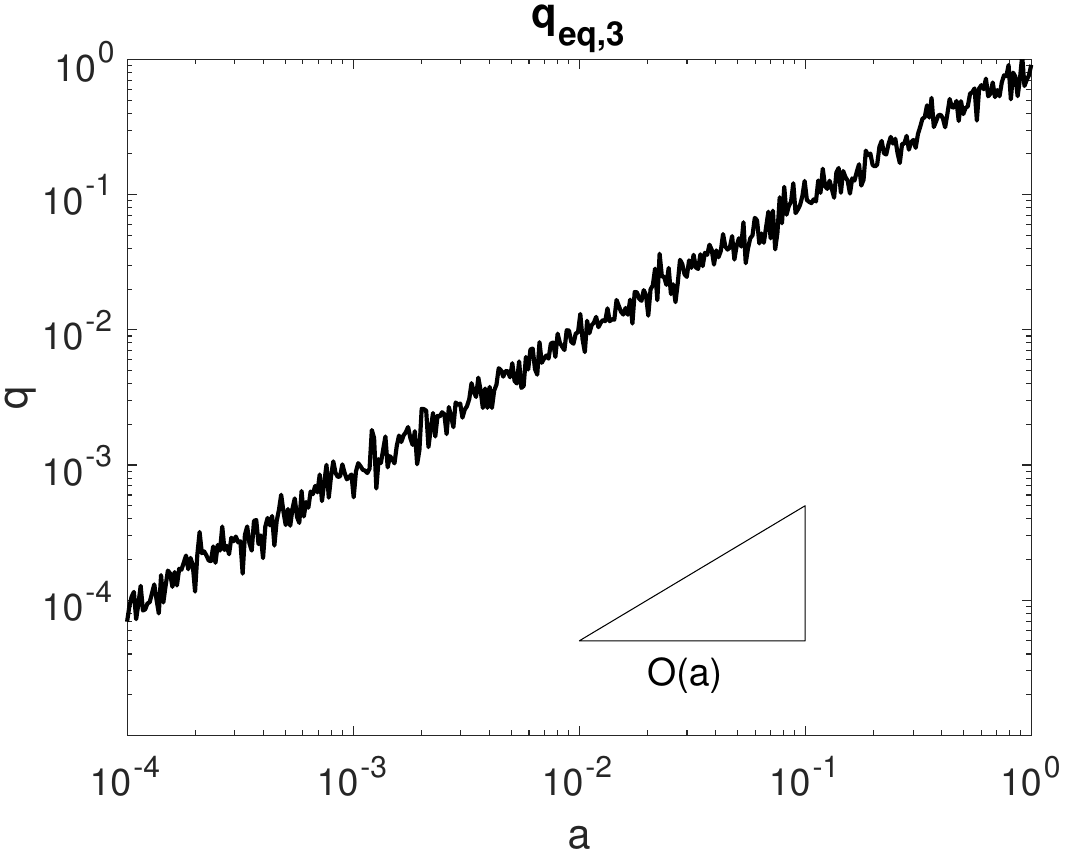}
\caption{Equidistribution measure vs. $a$ in the first test, computed with $\M_T=I$. See the caption of Fig. \ref{fig:randAli} for more details.}
\label{fig:randEq}
\end{center}
\end{figure}

\begin{figure}[ht]
\begin{center}
\includegraphics[width=4cm]{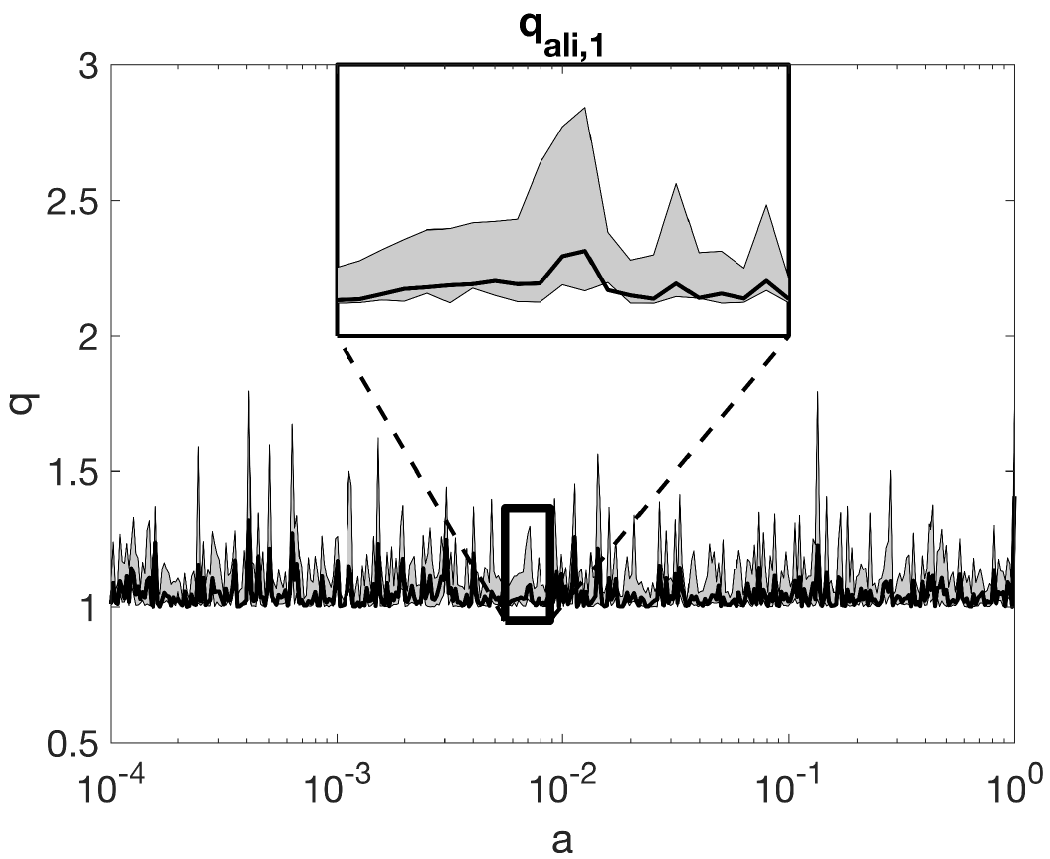} \; \includegraphics[width=4cm]{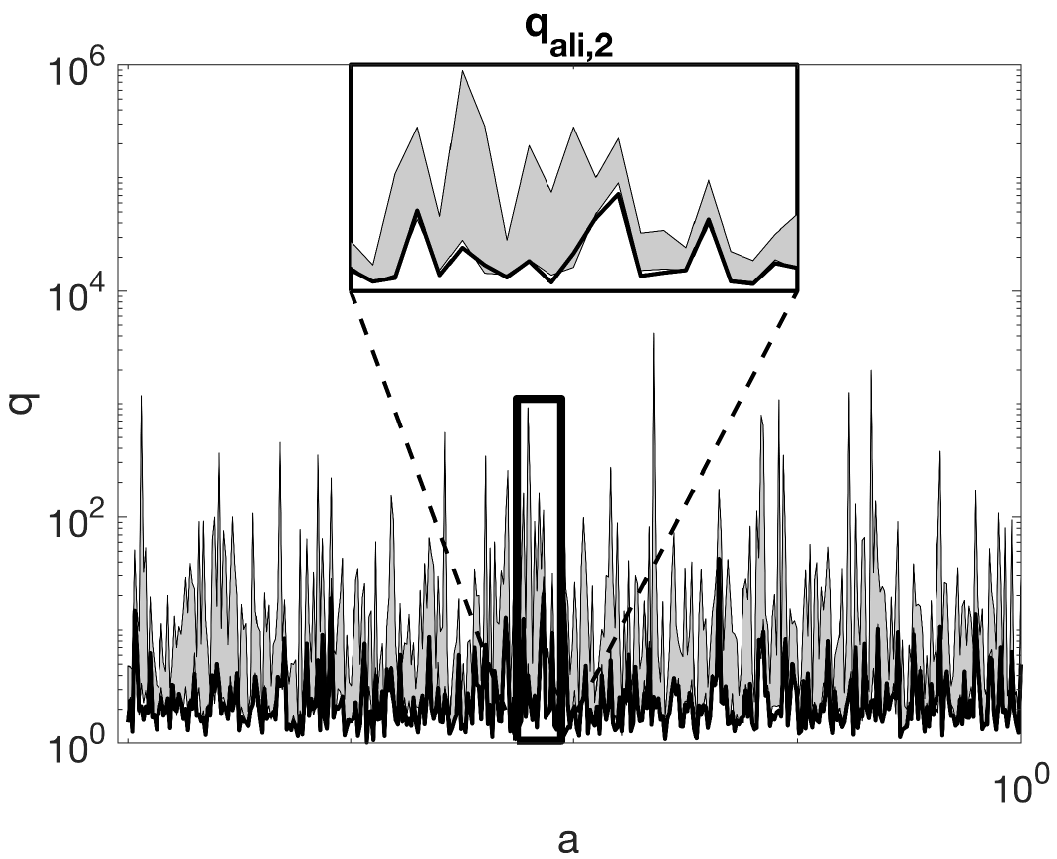} \; \includegraphics[width=4cm]{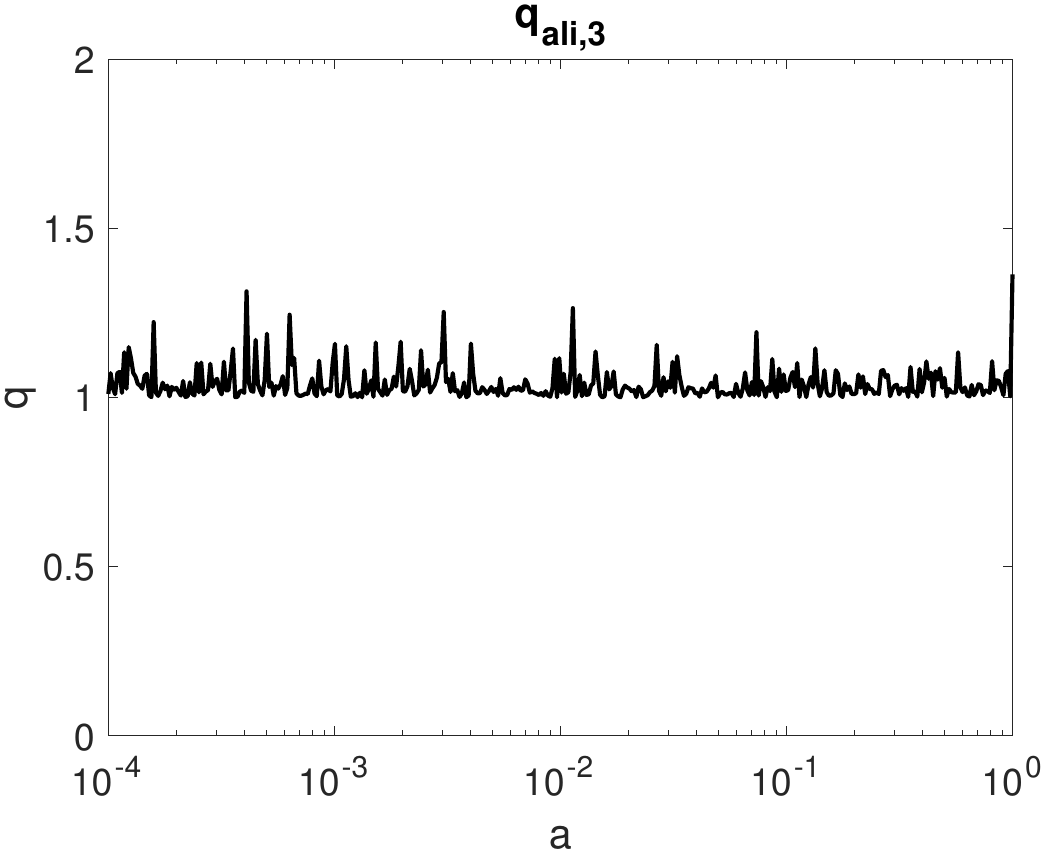}
\caption{Alignment measure vs. $a$ in the first test, computed with the metric in \eqref{eq:test1metric}. See the caption of Fig. \ref{fig:randAli} for more details.}
\label{fig:randMAli}
\end{center}
\end{figure}

\begin{figure}[ht]
\begin{center}
\includegraphics[width=4cm]{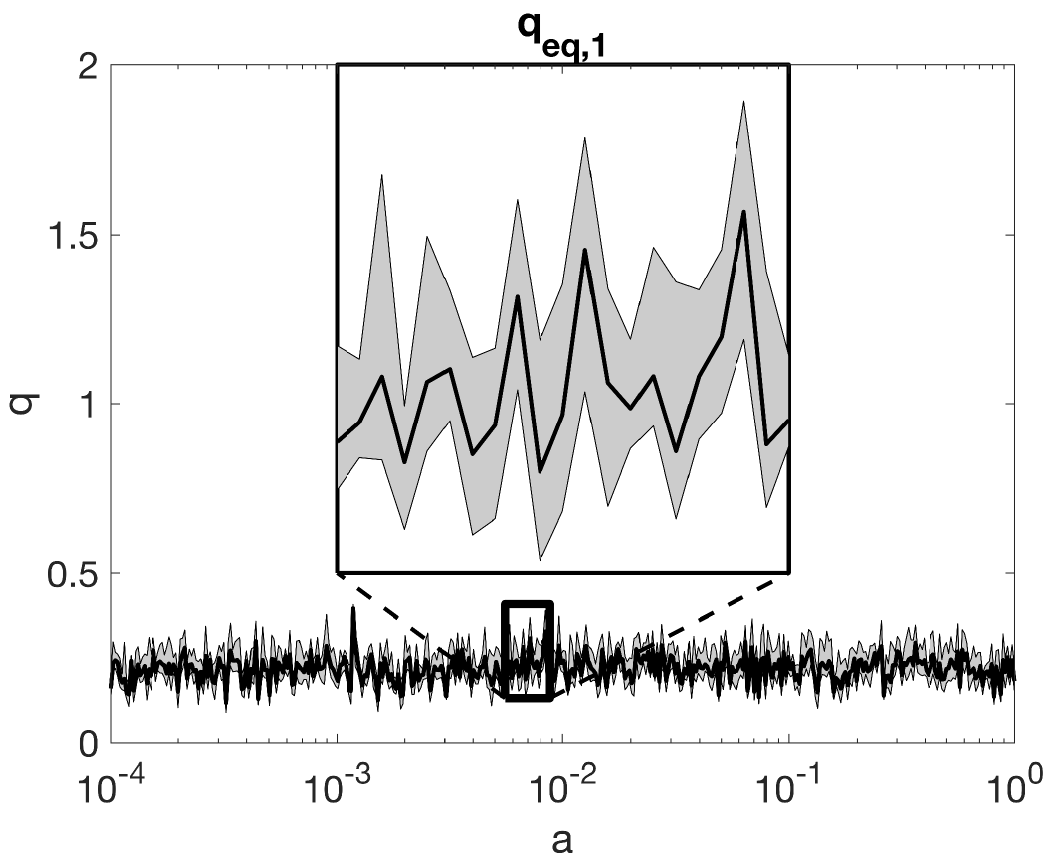} \; \includegraphics[width=4cm]{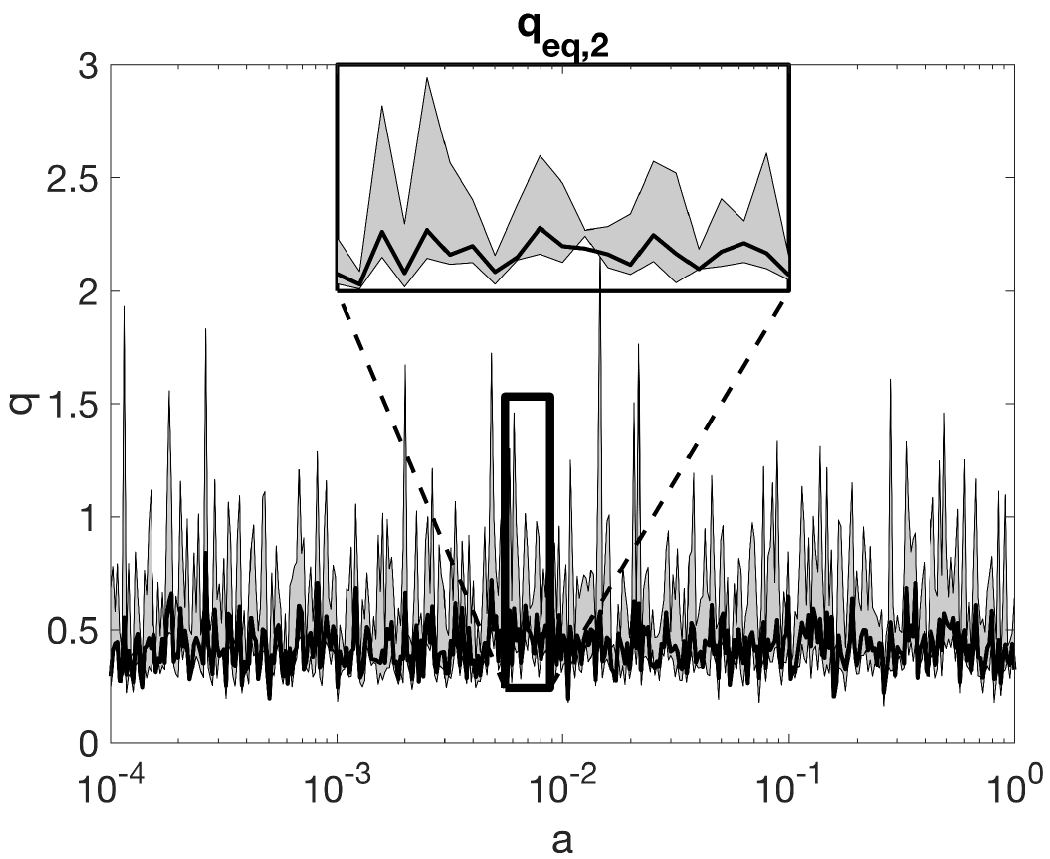} \; \includegraphics[width=4cm]{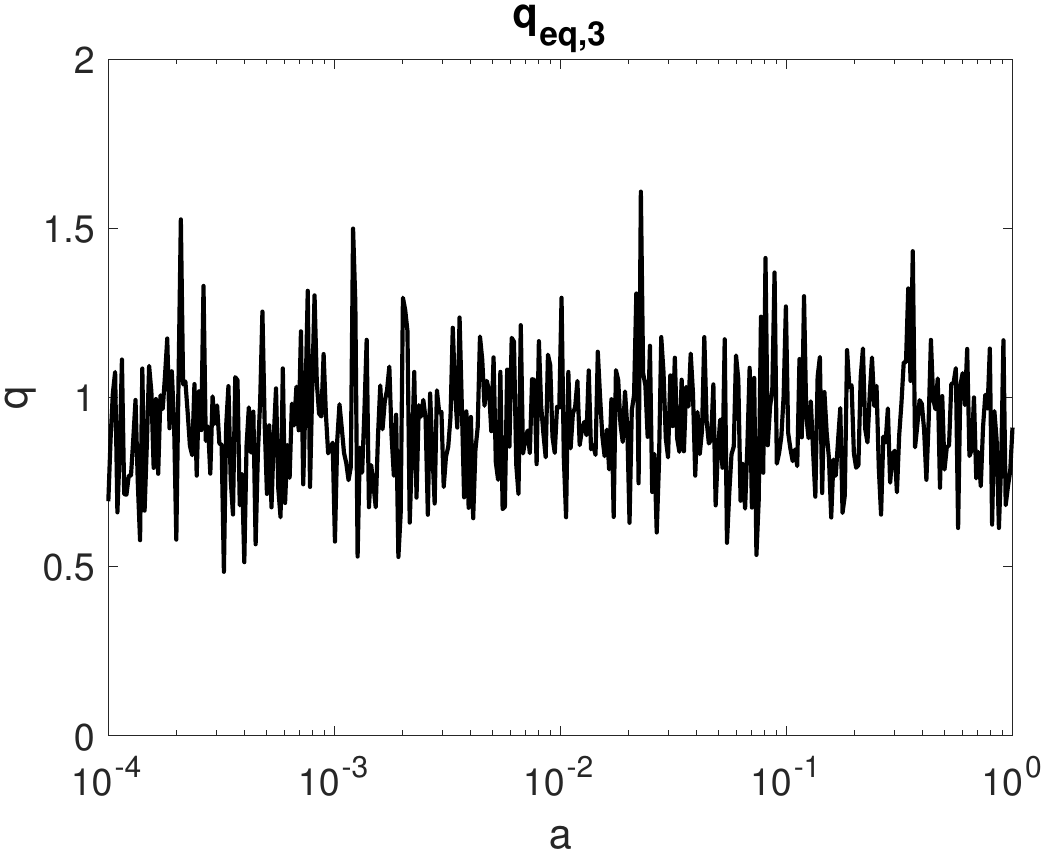}
\caption{Equidistribution measure vs. $a$ in the first test, computed with the metric in \eqref{eq:test1metric}. See the caption of Fig. \ref{fig:randAli} for more details.}
\label{fig:randMEq}
\end{center}
\end{figure}
 
 As expected, the quality measures under the Euclidean metric deteriorates as $a$ gets smaller, indicating that the quality of the polygons decreases. On the other hand,
the quality measures under the metric \eqref{eq:test1metric} appear to be independent of $a$. Moreover, we have the following interesting observations.
\begin{itemize}
\item Under the Euclidean metric, all three alignment measures have order $O(a^{-1})$ while all three equidistribution measures have order $O(a)$.
Here we emphasize that all the quality measures give reasonable {\em quantitative} measure of how ``bad" the polygon is.

\item The choice of anchor points has negligible impact on $q_{ali,1}$ and $q_{eq,1}$, whereas it does affect the values of $q_{ali,2}$ and $q_{eq,2}$ although the effect does not seem to alter the asymptotic order with respect to $a$. 

\item $q_{ali,2}$ and $q_{eq,2}$ computed with subdivision (b) have values comparable to the minimum value of $q_{ali,2}$ and $q_{eq,2}$ computed with subdivision (a),
i.e., the lower bound of the gray region in Figs. \ref{fig:randAli}-\ref{fig:randMEq}. Though they are not identical. 
Moreover, $q_{ali,2}$ and $q_{eq,2}$ computed with subdivision (b) have similar behavior as the other two sets of quality measures.

\item It is also interesting to notice that the dark curves, i.e., the first quality measure using the arithmetic center as the anchor point, the second using subdivision (b), and the third ones,
have relatively narrow numerical ranges for the test polygons. All three alignment measures in Fig. \ref{fig:randAli} are in approximately the range of $[1,10^{4}]$, 
while all three equidistribution measures in Fig. \ref{fig:randEq} in $[10^{-5},1]$. 
In Fig. \ref{fig:randMAli},  all three alignment measures lie approximately in the range of $[1,10]$, indicating good alignment property.
In Fig. \ref{fig:randMEq}, there is a small discrepancy among the values of equidistribution measures, i.e., $q_{eq,1}\approx 0.25$, $q_{eq,2}\approx 0.5$, and $q_{eq,3}\approx 1$.
This is because the random polygons are first generated in $[0,1]\times [0,1]$ and hence have original size of approximately $1\times 1$, 
while the unitary regular $n$-gon used as reference polygon for the first two sets of measures is defined using vertices $[\cos(2\pi i/n),\sin(2\pi i/n)]$ for $i=1,\ldots,n$
and hence has size of approximately $2\times 2$.
\end{itemize}

From the first test, we see that all quality measures, except for the second type using subdivision (a), provide {\em accurate and robust quantitative measures}.
The relatively less robust performance of $q_{ali,2}$ and $q_{eq,2}$ is probably related to the presence of short edges in the polygons. 
This may not be a bad thing in the sense that all other quality measures fail to capture the presence of short edges while the
second set of measures can. To further explore this, we design the second test,
\revZ{where the polygons are generated} as follows:
\begin{enumerate}
\item Randomly pick $n\in [3,10]$ and consider the unitary regular $n$-gon centered at the origin, with vertex $\vx_1$ located at $(1,0)$.
\item Insert a new vertex between $\vx_n$ and $\vx_1$ at $(1,-a)$. When $a$ is small, this generates an $(n+1)$-gon with the shortest edge length $a$. 
\end{enumerate}
We compute the three sets of mesh quality measures, all calculated under the Euclidean metric $\M_T=I$. 
In the test run, $200$ polygons are generated, with $n+1$ evenly distributed in $[4,11]$ and $a\in [10^{-5},10^{-1}]$.
The results are reported in Fig. \ref{fig:shortEdgeAli}-\ref{fig:shortEdgeEq}. 

\begin{figure}[ht]
\begin{center}
\includegraphics[width=4cm]{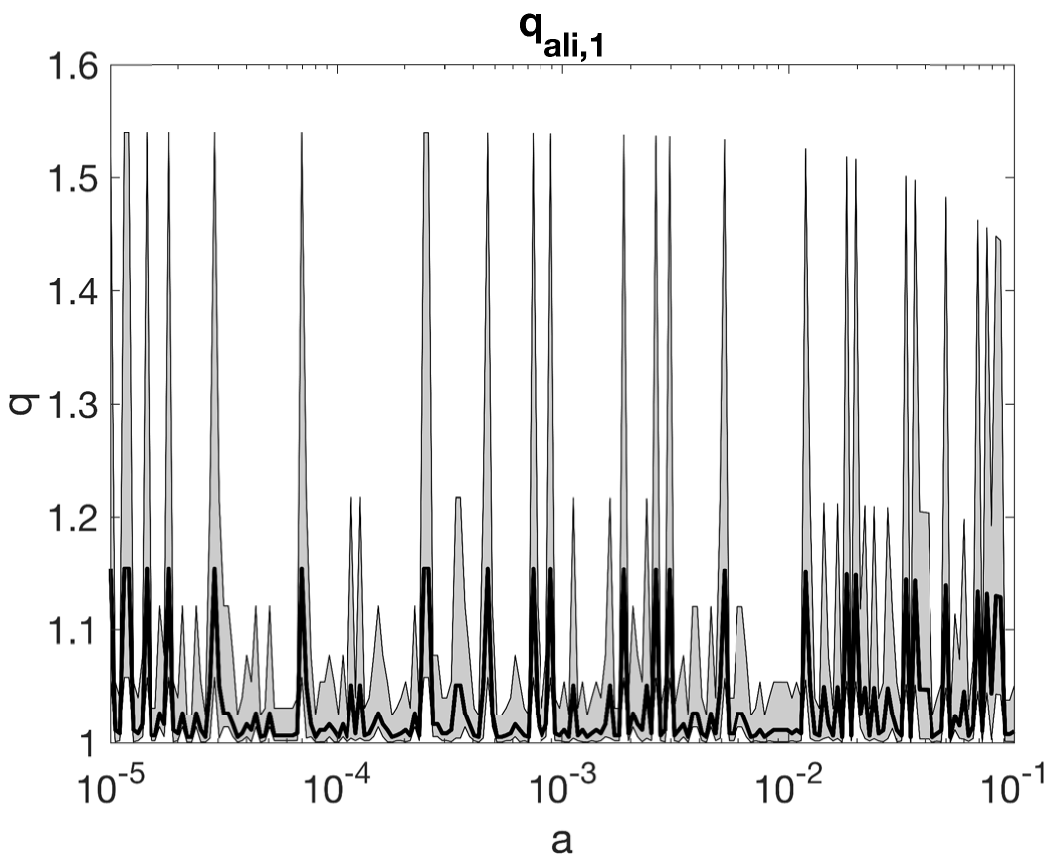} \; \includegraphics[width=4cm]{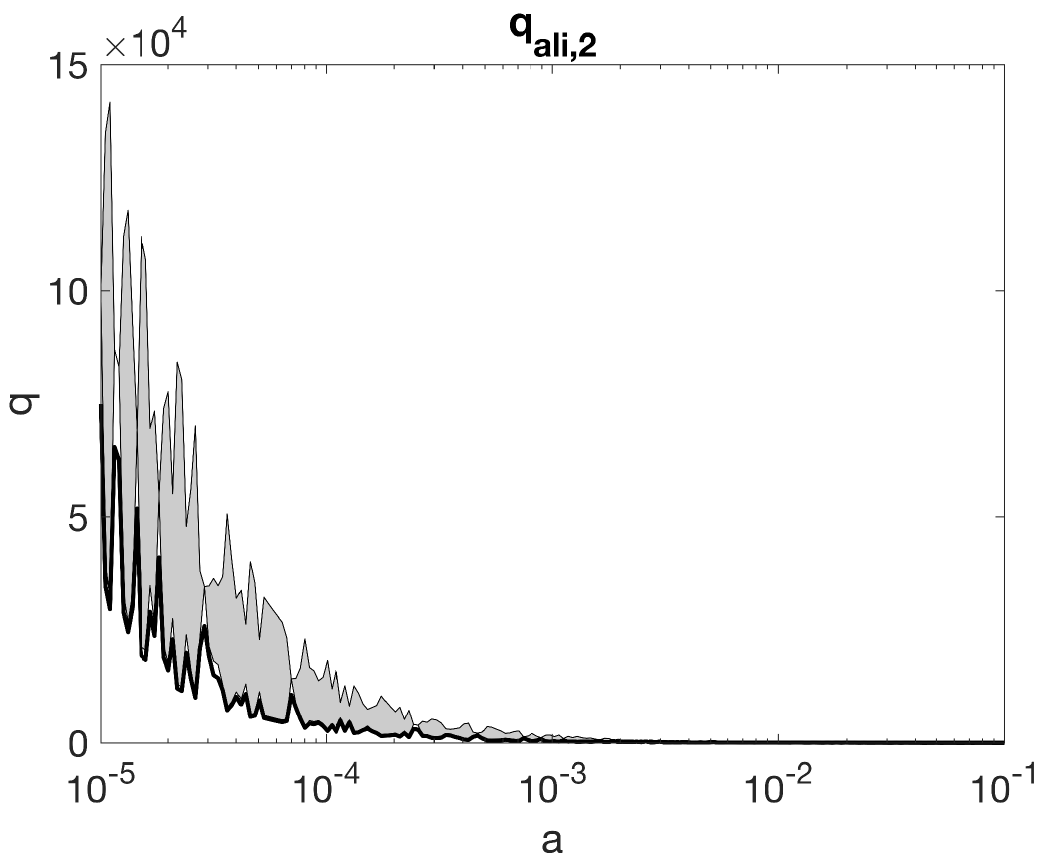} \; \includegraphics[width=4cm]{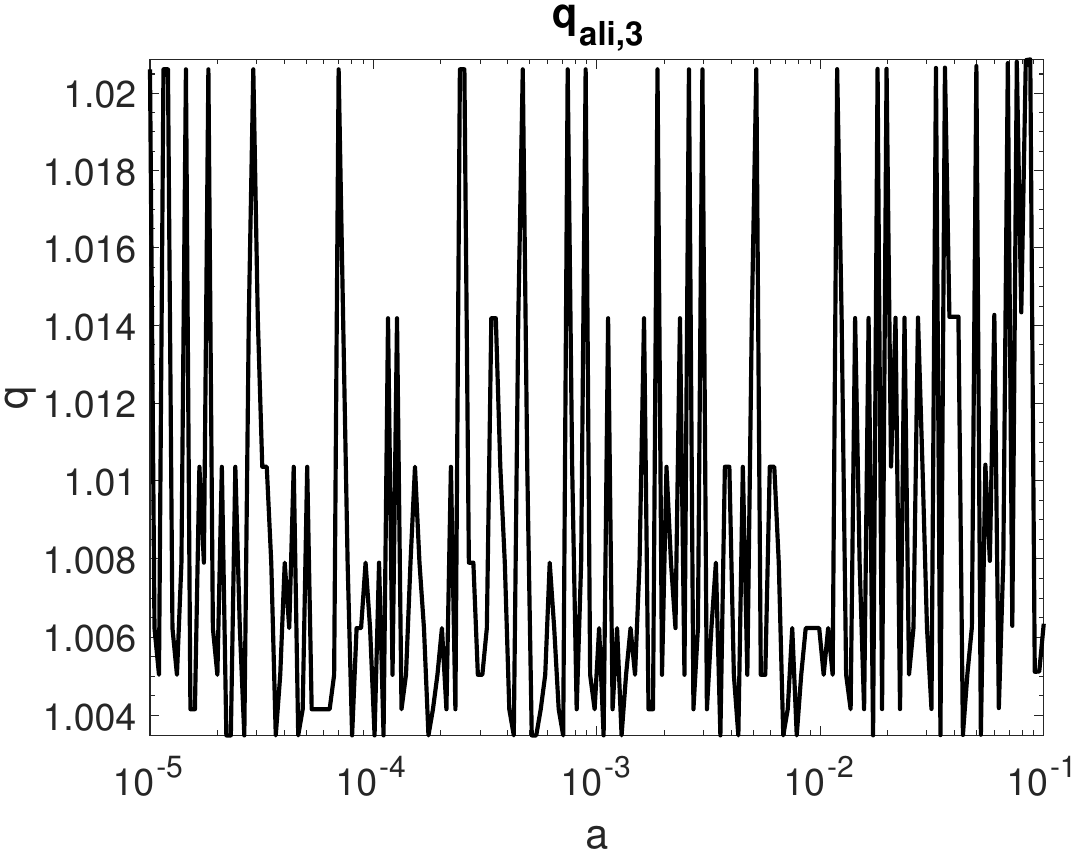}
\caption{Alignment measure vs. $a$ in the second test. See the caption of Fig. \ref{fig:randAli} for the meaning of shaded regions and dark curves.}
\label{fig:shortEdgeAli}
\end{center}
\end{figure}

\begin{figure}[ht]
\begin{center}
\includegraphics[width=4cm]{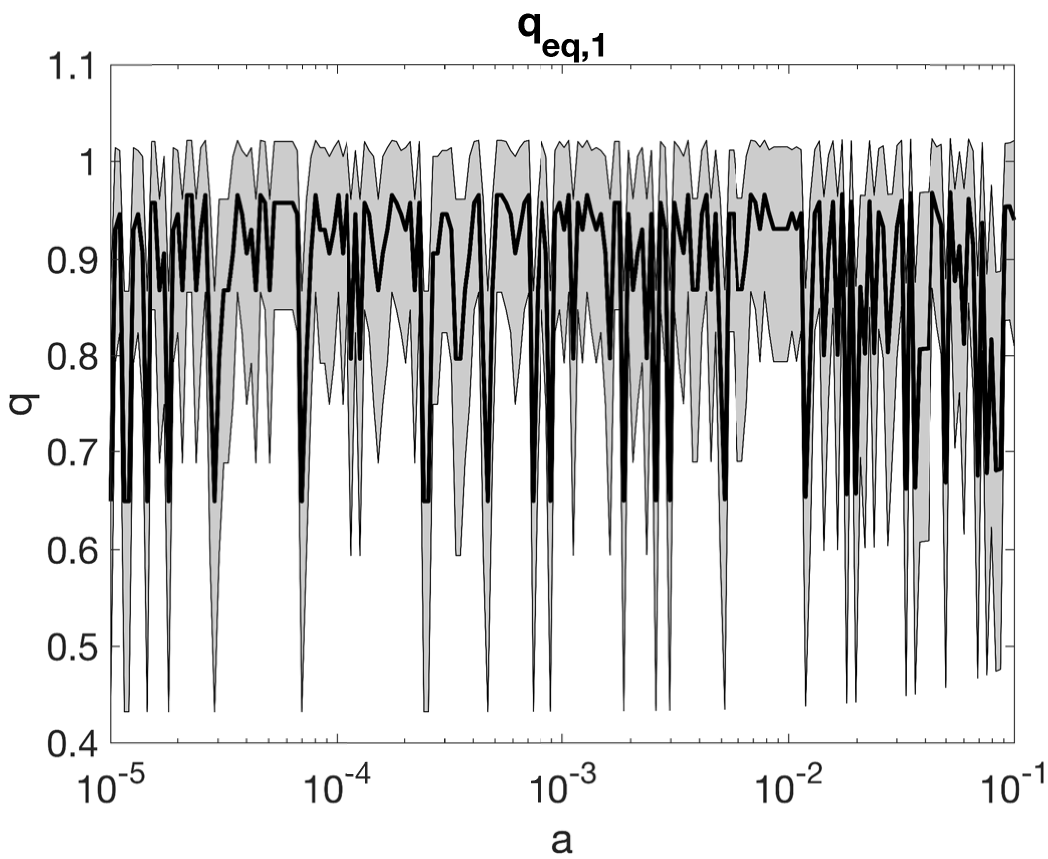} \; \includegraphics[width=4cm]{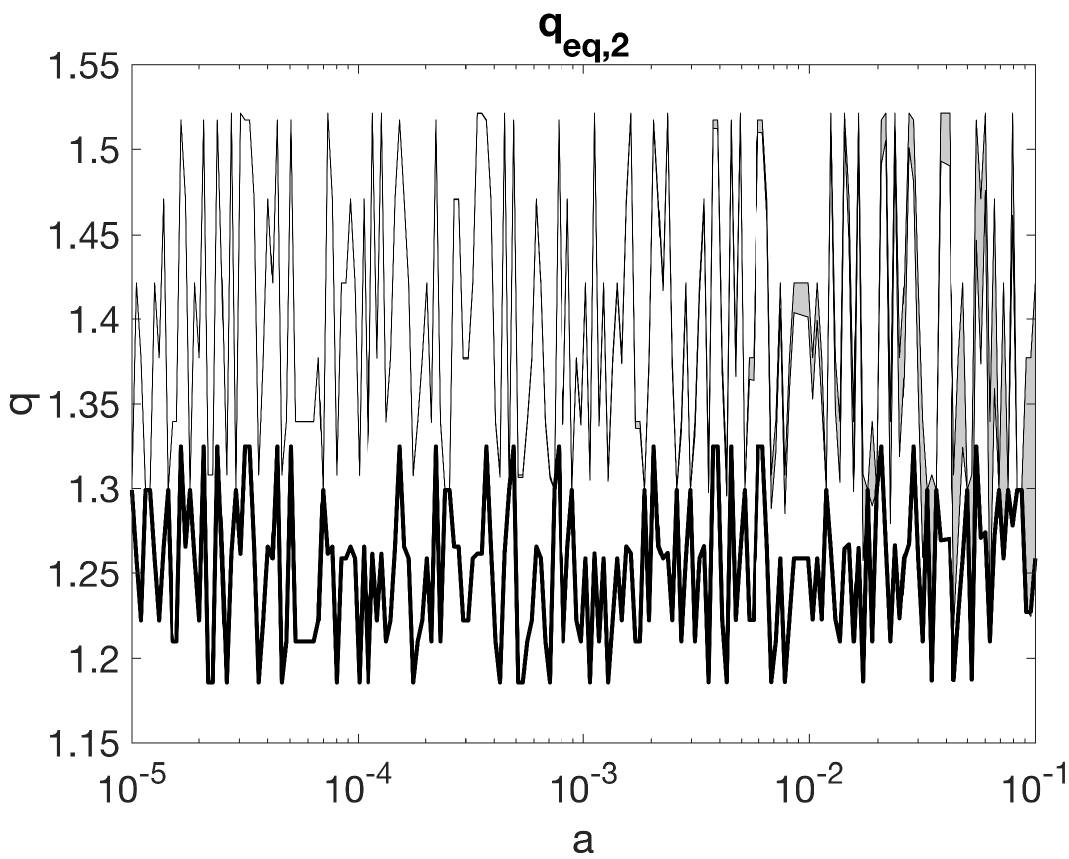} \; \includegraphics[width=4cm]{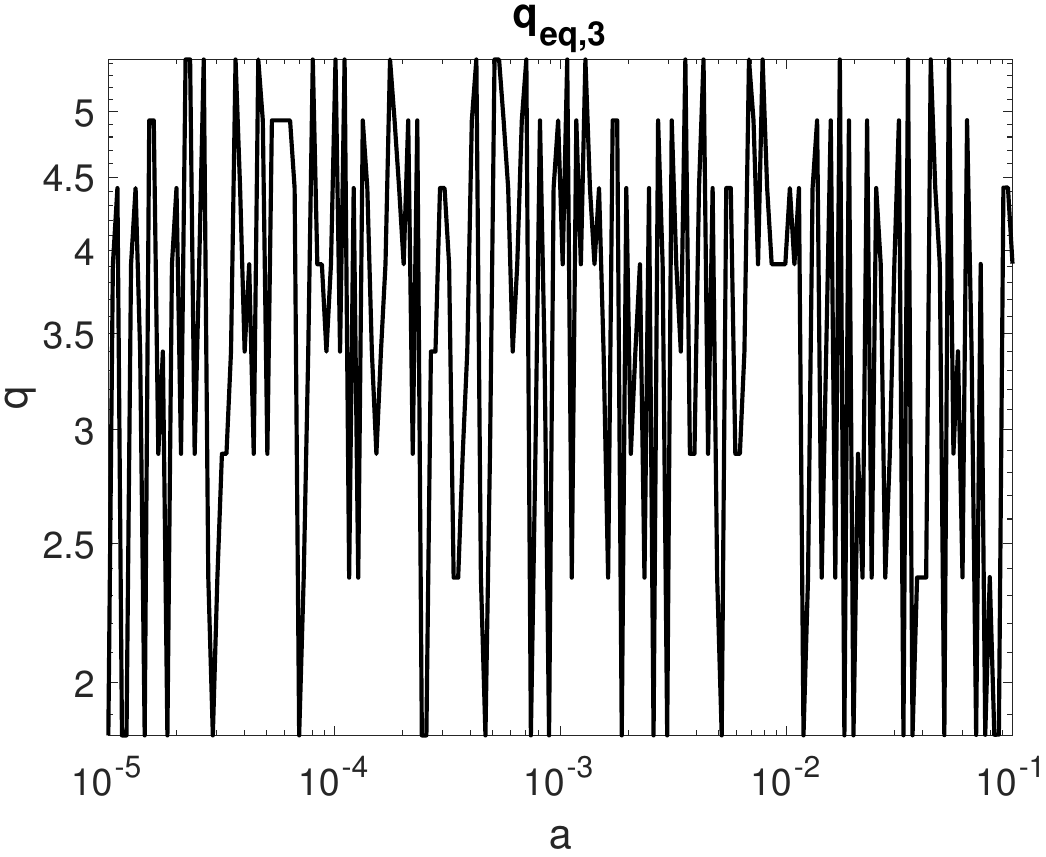}
\caption{Equidistribution measure vs. $a$ in the second test. See the caption of Fig. \ref{fig:randAli} for the meaning of shaded regions and dark curves.
\revAA{In the middle panel, the shaded region is thin and almost looks like a gray curve.}
}
\label{fig:shortEdgeEq}
\end{center}
\end{figure}

From Fig. \ref{fig:shortEdgeAli}-\ref{fig:shortEdgeEq}, we see that all quality measures except $q_{ali,2}$ appear to be (asymptotically) insensitive to the short edge. 
This also means that they cannot detect short edges.
The exception is $q_{ali,2}$, which is understandable since it is defined using the maximum value on all sub-triangles.
This can be a useful feature since the presence of short edges is considered harmful in many numerical methods.

A simple solution for now is try to avoid short edges in given polygonal meshes. 
One way is to combine the vertices into one if they are too close to each other.
For example, all CVT meshes used in this paper has been processed this way to ensure  that there \revZ{are} no short edges.
From the numerical results of the first and second tests, it is probably safe to claim 
that the effect of short edges on $q_{ali,2}$, with subdivison (b), is relatively small and negligible when the edge length
is controlled within around $10^{-2}\times$(diameter of polygon). 
This gives a practical threshold for the vertex merging process.
One may even raise the  threshold to $10^{-1}\times$(diameter of polygon) for better performance.
}

\subsection{Summary}
\label{sec:summary}

The three sets of mesh quality measures discussed in the previous subsections can be
viewed as discretizations or numerical approximations of the continuous alignment and equidistribution
quality measures defined in (\ref{eq:contGlobalMeasurements}). Moreover, they all adopt the idea of
evaluating the quality of a polygonal mesh by comparing its elements to their counterparts
in the computational mesh which can be a conventional mesh or a collection of reference polygons.
Numerical experiment shows that they
all provide correct measures for the quality of Voronoi meshes generated by Lloyd's algorithm.
\revi{They also provide accurate and robust quantitative measures on randomly generated polygons.
All measures except for $Q_{ali,2}$ appear to be insensitive to the presence of short edges.
}
Some of the main features of these mesh quality measures are elaborated in the following.

\begin{itemize}
\item $Q_{ali,3}$ (\ref{ali-3}) and $Q_{eq,3}$ (\ref{eq-3})
	are fully determined by the mesh $\T$. Specifically,
	the computational mesh $\T_C$ is a collection of $K_T$'s which are shown to have
	reasonably good quality and determined by
	the coordinates of the vertices of $T$. The mapping $\cF_T$ is an affine mapping
	from $K_T$ to $T$, which is also determined by the coordinates of the vertices of $T$.
\item $Q_{ali,1}$ (\ref{ali-1}) and $Q_{eq,1}$ (\ref{eq-1}) are not fully determined
	by the mesh $\T$. The computational mesh $\T_C$ can be chosen by the user
	to be a mesh or a collection of reference $n$-gons. The measurement of the quality of $\T$
	depends on the choice of $\T_C$. The mapping $\cF_T$ is approximated by
	an affine mapping that is obtained by least squares fitting to the correspondence
	between the vertices of $T\in \T$ and those of $T_C\in \T_C$ and thus fully determined by
	$\T$ and $\T_C$.
\item $Q_{ali,2}$ (\ref{ali-2}) and $Q_{eq,2}$ (\ref{eq-2}) are not fully determined
	by the mesh $\T$. The computational mesh $\T_C$ can be chosen by the user
	to be a mesh or a collection of reference $n$-gons. \revi{The mapping $\cF_T$ is specified using
	generalized barycentric mappings, for example, \revZ{the piecewise linear barycentric mapping}.}
	\revZ{This} measurement of the quality of $\T$ depends on the choice of $\T_C$
	as well as the choice of $\cF_T$. 
	\revi{
	Numerical tests show that $Q_{ali,2}$ is sensitive to the presence of short edges.
	But when the shortest edge length is controlled within around $10^{-2}\times$(diameter of polygon),
	$Q_{ali,2}$ and $Q_{eq,2}$, using subdivision (b), have similar performance \revZ{as} the other two \revZ{sets} of quality measures.
	}
\end{itemize}

\begin{table}[ht]
  \caption{$Q_{ali}$ and $Q_{eq}$ at selected Lloyd's iteration steps for the $32\times 32$ mesh.
  Subdivision (b) was used for $Q_{ali,2}$ and $Q_{eq,2}$.}
  \label{tab:LloydQC}
\begin{center}
  \begin{tabular}{|c||c|c||c|c||c|c|}
    \hline
    Iter. & $Q_{ali,1}$ & $Q_{eq,1}$ & $Q_{ali,2}$ & $Q_{eq,2}$ & $Q_{ali,3}$ & $Q_{eq,3}$ \\ \hline
    0    & 3.4362	 & 3.0130	 & 3.4362	 & 7.3202	 & 3.4362	 & 3.6641 \\ \hline
    2    & 1.8519	 & 2.5239	 & 2.9940	 & 3.3286	 & 1.8851	 & 2.6679 \\ \hline
    8    & 1.3927	 & 1.5355	 & 2.9190	 & 2.5647	 & 1.3869	 & 1.9022 \\ \hline
    43 & 1.1394	 & 1.3771	 & 2.7847	 & 2.1333	 & 1.1370	 & 1.4703 \\ \hline
  \end{tabular}
\end{center}
\end{table}

\revA{
Finally, we briefly discuss how to extend the three sets of mesh quality measures into three dimensions (3D).
For the first set, the extension itself is straightforward since the \revZ{least-squares} fittings work readily in 3D.
The difficulty lies in how to obtain reference polyhedra.
  In 3D, mappings between two polyhedra rely not only on their number of vertices, but also on their topological
  type, i.e., \revB{the relation of vertices, edges and faces.} This is different from the 2D case, where a regular $n$-gon
  serves as a good reference element for all convex $n$-gons. Constructing 3D ``regular'' reference polyhedra
  is not trivial in general. Nevertheless, the first set of mesh quality measures is still useful especially
  in cases where a reference mesh $\T_C$ with the same topological structure as $\T$ is available.
  An example is moving mesh algorithms where meshes are different only in location of their vertices.

  For the second set of mesh quality measures, in addition to well-defined reference polyhedra,
  one also needs well-defined generalized barycentric mappings.
  For the piecewise linear barycentric mapping, we know that it exists for convex polyhedra.
  \revC{Note that in a Voronoi diagram, all polyhedra are convex.}
  Some non-convex polyhedra, for example the Sch\"{o}nhardt polyhedron, may not have a simplicial subdivision.
  Hence the second set of mesh quality measures cannot be extended to such polyhedra.

  The third set of mesh quality measures can be extended to 3D similarly using the singular value decomposition
  of the matrix $B_T$. Moreover, one does not need to worry about the reference polyhedron since it is defined
  automatically by $P_T$.
  
  Finally, the above discussion only points out that an extension to 3D is possible. 
  The actual extension is highly non-trivial and many details remain to be explored in the future.
}

\section{Anisotropic polygonal mesh adaptation}
\label{sec:MMPDE}

In this section, we study the anisotropic adaptation for polygonal meshes
through a moving mesh method
based on the MMPDE (moving mesh PDE) \cite{HRR94a,HR11}.
Notice that a mesh is completely determined by two data structures:
the coordinates and connectivity of vertices that form polygons.
In a certain range, one can move the vertices without changing
mesh topology or tangling the mesh.
The moving polygonal mesh method uses this idea and implements it in an iterative manner.
It should be pointed out that the MMPDE method is only a method for generating adaptive,
anisotropic polygonal meshes. Other methods, such as those based on refinement, \revB{can also be used}.

The procedure for the moving polygonal mesh method \revi{for solving a given PDE problem is presented below.}
\begin{enumerate}
\item Initialization: Given an initial physical mesh $\T^{(0)}$ for $\Omega$;
\item Outer iteration ($k=0, 1, ...$):
  \begin{description}
	\item{\revA{2a}} Update the metric tensor $\M^{(k)}$ based on the information
		available at the current iteration.
		The information includes the current mesh $\T^{(k)}$ and the physical solution $u^{(k)}$
		that is obtained by solving the underlying PDE on the current mesh $\T^{(k)}$.
		\revi{For example, the metric tensor $\M^{(k)}$ can be computed using various formulae involving the reconstructed Hessian of $u^{(k)}$; e.g., see \cite{HS03}.}
	\item{\revA{2b}} Find a way to move the vertices of the physical mesh so that the new mesh $\T^{(k+1)}$
		has a better quality under the metric $\M^{(k)}$,
		\revi{which will be further explained below.}
		\label{step-2b}
	\end{description}
\end{enumerate}

\revi{
We first briefly describe the main idea behind Step~2b.
When starting Step~2b, the metric tensor $\M^{(k)}$ is given and we assume that the mesh topology will not change, i.e., $\T^{(k+1)}$ has
the same topological structure as $\T^{(k)}$. Therefore, to compute $\T^{(k+1)}$ is the same as to compute the coordinates of vertices for $\T^{(k+1)}$.
Note that if a reference computational mesh is given, the mesh quality measures of $\T^{(k+1)}$, as defined in Section \ref{sec:qualitymeasures}, depend
solely on the Jacobian matrix and the metric tensor, which in turn depend solely on the coordinates of vertices of $\T^{(k+1)}$. 
This turns the problem of finding $\T^{(k+1)}$ into an optimization problem: 
finding the coordinates of vertices in order to minimize the alignment measure, the equidistribution measure, or a combination of both.
Standard techniques for solving the optimization problem such as the gradient descent method or the gradient flow approach can then be applied.

In this work, the moving mesh strategy \revZ{is} based on $Q_{ali,2}$ and $Q_{eq,2}$
(with piecewise \revZ{linear} barycentric mappings and subdivision (b)).
There are several advantages of using the second set of mesh quality measures.
First of all, $Q_{ali,2}$ and $Q_{eq,2}$ are associated with a triangulation of each
polygon in the mesh and the union of those triangulations forms a triangular mesh itself.
This allows us to reuse part of the code previously developed in \cite{HK2014}
for moving triangular meshes. \revB{As an additional benefit, the moving mesh
method of \cite{HK2014} has been shown both theoretically and numerically \cite{HK2017}
to maintain the nonsingularity of the triangular mesh during the mesh movement,
i.e., there is no mesh tangling as long as the initial mesh is not tangled.
Thus we expect that the polygonal moving mesh method based on the sub-triangulation can avoid mesh tangling
as long as it has a nonsingular sub-triangulation, which is true if the mesh elements remain convex.}
Moreover, from Table~\ref{tab:LloydQC} one can see that $Q_{ali,2}$ and $Q_{eq,2}$
have \revZ{larger} values than the first and third sets of quality measures. In this sense,
they can be viewed as the toughest measure among the three. In addition,
$Q_{ali,2}$ is sensitive to the presence of short edges. Thus, algorithms based on
(and minimizing) $Q_{ali,2}$ and $Q_{eq,2}$ more likely produce meshes of better quality and
particularly avoid short edges than algorithms based on the other sets of quality measures.

We should emphasize that in principle, the same procedure can also be applied \revZ{to} the first and third sets
of mesh quality measures.
However, since these two sets of measures deal directly with polygonal
meshes, there are at least two things that cannot be achieved as for $Q_{ali,2}$ and $Q_{eq,2}$
that are associated with triangular meshes. The first is that there is no theoretical guarantee at least so far
that the minimization process can avoid mesh tangling \revDD{even if} the initial mesh is nonsingular and the mesh elements remain convex.
The other is that \revZ{an} effective minimization process typically requires the use of analytic gradients of
a meshing function that can be cumbersome to get for general polygonal meshes.
Analytic formulae for the gradients of meshing functions have been derived 
in compact matrix form for triangular meshes in \cite{HK2017}.
One may argue that numerical approximations of the gradients can be used.
However, our limited numerical experience shows that mesh adaptation can lead to
very small mesh spacing and numerical gradients can lead to inaccurate location of vertices
and thus \revZ{the resulting algorithms are less reliable}.
}

\revi{
The quality measures $Q_{ali,2}$ and $Q_{eq,2}$ are defined by taking maximum values, which do not make suitable objective functions
for an optimization problem. We take a variational approach that is commonly used in the moving mesh community.}
Assume that a reference computational mesh $\hat{\T}_C$ has been chosen
for the mesh movement purpose \revDD{such that $\hat{\T}_C$ has the same topology as $\T^{(k+1)}$
and there is a one-to-one correspondence between mesh elements in these two meshes.} 
We will discuss how to set up $\hat{\T}_C$ later.
Denote the triangular meshes resulting from the triangulation associated
with $Q_{ali,2}$ and $Q_{eq,2}$ for $\hat{\T}_C$ and $\T^{(k+1)}$
by $\T_{\hat{\T}_C}$ and $\T_{\T^{(k+1)}}$, respectively. 
\revDD{Since $\hat{\T}_C$ and $\T^{(k+1)}$ are assumed to have the same topology
and subdivision (b) (which triangulates each polygonal element
into triangular elements by connecting the arithmetic center (the anchor point) to all vertices of the element)
is used for elements of both $\hat{\T}_C$ and $\T^{(k+1)}$,
$\T_{\hat{\T}_C}$ and $\T_{\T^{(k+1)}}$ have the same topology and there is also a one-to-one correspondence
between their triangular elements.}
Consider the function
\begin{equation}
\label{Ih-1}
\revDD{\hat{I}_h( \{\hat{\vxi}_i\}, \{ \vx_i^{(k+1)}\})  = \sum_{K \in \T_{\T^{(k+1)}}} |K| G(\hat{J}_K, \det(\hat{J}_K),\hat{\M}_K) = \sum_{T \in \T^{(k+1)}} \sum_{K\in \T_T} |K| G(\hat{J}_K, \det(\hat{J}_K),\hat{\M}_K),}
\end{equation}
where $\{\hat{\vxi}_i\}$ and $\{ \vx_i^{(k+1)}\}$ denote the coordinates of the vertices of
$\T_{\hat{\T}_C}$ and $\T_{\T^{(k+1)}}$, respectively,
\revi{$\hat{\M}_K$} is the average of $\M^{(k)}$ on $K$,
\revi{$\hat{J}_K$} is the inverse of the Jacobian
matrix $\J_K$ of the affine mapping from $K_C \in \T_{\hat{\T}_C}$ to $K \in \T_{\T^{(k+1)}}$,  and
\revi{
\begin{align}
G(\hat{J}_K, \det(\hat{J}_K), \hat{\M}_K)  & =
\frac{1}{3} \sqrt{\det(\hat{\M}_K)} \left( \text{trace}(\hat{J}_K \hat{\M}_K^{-1} \hat{J}_K^T) \right)^2
 + \frac{4}{3}  \sqrt{\det(\hat{\M}_K)} \left( \frac{\det(\hat{J}_K)}{\sqrt{\det(\hat{\M}_K)}} \right)^2 .
\label{G}
\end{align}
}
The function (\ref{Ih-1}) is a \revi{Riemann sum} of a continuous meshing functional \cite{Hua01b,HK2014}
and it is known that minimizing \revi{$\hat{I}_h$}
will tend to make the mesh to satisfy the alignment and equidistribution conditions
associated with $\M^{(k)}$
and thus to have a better quality under the metric $\M^{(k)}$.

\revi{
The reference mesh $\hat{\T}_C$ is assumed to have the same topological structure as $\T^{(k+1)}$. 
Recall that the moving mesh iteration does not alter the topological structure of the mesh, i.e., 
$\T^{(0)},\,\T^{(1)},\, \ldots, \, \T^{(k+1)}$ all have the same topological structure.
One can simply choose $\T^{(0)}$ as a CVT and then take $\hat{\T}_C = \T^{(0)}$.
Now, since $\T_{\hat{\T}_C}$ is known, $\hat{I}_h$ is a function of $\{ \vx_i^{(k+1)}\}$.
Thus, $\{ \vx_i^{(k+1)}\}$ or $\T_{\T^{(k+1)}}$ can be obtained by minimizing $\hat{I}_h$.
Solving the minimization problem usually involves an iterative method because $\hat{I}_h$ is highly nonlinear.
This forms the inner iteration.
}

\revi{
Now we have a complete and implementable Step~2b. In the following we shall elaborate the inner iteration.}
Recall that $\M^{(k)}$ is defined on $\T^{(k)}$. During the inner iteration, the metric tensor
needs to be updated constantly (through interpolation) on approximate meshes of $\T^{(k+1)}$
that will have different vertex locations than $\T^{(k)}$.
This can be expensive even if linear interpolation is used.

\revDD{
To avoid this difficulty, we use an indirect approach to find $\T^{(k+1)}$.
Since the computational mesh $\hat{\T}_C$ is a conventional mesh,
we denote by $\Omega_C$ the domain that $\hat{\T}_C$ occupies.
We notice that $\hat{I}_h$ defined in (\ref{Ih-1}) is a function of the triangulation $\T_{\hat{\T}_C}$
for $\Omega_C$ and the triangulation $\T_{\T^{(k+1)}}$ for $\Omega$.
If we replace $\T_{\T^{(k+1)}}$ by $\T_{\T^{(k)}}$ and $\T_{\hat{\T}_C}$ by $\T_{\T_C}$ in $\hat{I}_h$,
where $\T_C$ is a polygonal mesh for $\Omega_C$ with the same topology
as $\T^{(k)}$, we obtain a function
\begin{equation}
\label{lh-1a}
I_h ( \{\veta_i\}, \{ \vx_i^{(k)}\}) = \sum_{K \in \T_{\T^{(k)}}} |K| G(J_K, \det(J_K), \M_K) = \sum_{T \in \T^{(k)}} \sum_{K\in\T_T} |K| G(J_K, \det(J_K), \M_K), 
\end{equation}
where $\{\veta_i\}$ are the vertices of $\T_{\T_C}$, $J_K$ is the inverse of the Jacobian matrix $\J_K$ of the affine mapping from $K_C \in \T_{\T_C}$
to $K \in \T_{\T^{(k)}}$, $\M_K$ is the average of $\M^{(k)}$ on $K$, and function $G$ is defined as in \eqref{G}.
If we minimize $I_h$ with respect to $\{\veta_i\}$ while fixing $\{ \vx_i^{(k)}\}$ (and fixing the mesh topology),
we can obtain an optimal solution
$\{\veta_i\}$ or $\T_{\T_C}$. A piecewise linear mapping $\cF_h$ from $\Omega_C$ to $\Omega$, satisfying
\begin{equation}
\{ \vx_i^{(k)}\} = \cF_h (\{\veta_i\}) ,
\label{linear-mapping}
\end{equation}
can be defined using the affine mappings between the elements in $\T_{\T_C}$ and their counterparts
in $\T_{\T^{(k)}}$.
We can then define the new mesh $\T_{\T^{(k+1)}}$ (and then $\T^{(k+1)}$) by setting
\[
\{ \vx_i^{(k+1)}\} = \cF_h (\{\hat{\vxi}_i\}) ,
\]
which can be computed through (\ref{linear-mapping}) by linear interpolation.

To justify this indirect approach, we recall that $\hat{I}_h$ defined in (\ref{Ih-1}) is a Riemann sum
of a continuous meshing functional \cite{Hua01b,HK2014}. Minimizing (\ref{Ih-1}) with respect
to $\{ \vx_i^{(k+1)}\}$ for fixed $\{\hat{\vxi}_i\}$ leads to a new physical mesh $\T_{\T^{(k+1)}}$
which, together with $\T_{\hat{\T}_C}$, defines a piecewise linear mapping (denoted by $\tilde{\cF}_h$)
from $\Omega_C$ to $\Omega$. It is reasonable to expect that $\tilde{\cF}_h$ is an approximation
to the minimizer of the continuous meshing functional when the mesh is sufficiently fine.
On the other hand, $I_h$ defined in (\ref{lh-1a}) is also a Riemann sum of the continuous
meshing functional. Similarly, we can expect that $\cF_h$ is also an approximation to
the minimizer of the continuous functional. As a result, we can expect that $\cF_h$ is close to $\tilde{\cF}_h$
when the mesh is sufficiently fine.
}

\revi{We choose the gradient flow approach for minimizing $I_h( \{\veta_i\}, \{ \vx_i^{(k)}\})$.}
\revD{The mesh equation is presented below without derivation.}
The interested reader is referred to \cite{HK2014} for detailed derivation.
\begin{equation}
   \begin{cases}
   \frac{d \revi{\V{\eta}_i}}{d t}
      = \frac{P_i}{\tau} \sum\limits_{K \in \omega_i} |K| \V{v}_{i_K}^K,
      & \quad i = 1, \dotsc, N_v, \quad t > 0,
   \\
   \revi{\veta_i}(0) = \hat{\V{\xi}}_i,
   & \quad i = 1, \dotsc, N_v,
   \end{cases}
   \label{mmpde-1}
\end{equation}
\revC{where $N_v$ is the number of vertices of the sub-triangulation $\T_{\T^{(k)}}$},
$\omega_i$ is the patch of triangles associated with vertex $\vx_i^{(k)}$ in $\T_{\T^{(k)}}$,
$i_K$ is the local index of $\vx_i^{(k)}$ in $K$, $\V{v}_{i_K}^K$ is the local mesh velocity associated
with the $i_K^{\text{th}}$ vertex of $K$,
$\tau > 0$ is a constant parameter used to adjust
the time scale of mesh movement, and $P= (P_1, \dotsc, P_{N_v})$ is a positive function used to
make the mesh equation to have desired invariance properties.
The local velocities are given by
\begin{equation}
   \begin{bmatrix} (\V{v}_1^K)^t\\  (\V{v}_2^K)^t \end{bmatrix}
      = - E_K^{-1} \frac{\partial G}{\partial J_K}
         - \frac{\partial G}{\partial \det(J_K)}
            \frac{\det(E_{K_c})}{\det(E_K)} E_{K_c}^{-1},
   \qquad
   \V{v}_0^K =  - \sum_{j=1}^2 \V{v}_j^K ,
   \label{mmpde-2}
\end{equation}
where
\begin{align*}
\frac{\partial G}{\partial J_K}  & =  \frac{4}{3} \sqrt{\det(\M_K)}  \,
      {\text{trace}(J_K \M_K^{-1} J_K^T )} \;  \M_K^{-1} J_K^T,
\\
\frac{\partial G}{\partial \det(J_K)} & = \frac{8}{3}  \frac{\det{(J_K)}}{\sqrt{\det(\M_K)}} .
\end{align*}
The balancing function in (\ref{mmpde-1}) is chosen to be $P_i = \det(\M(\vx_i^{(k)}))^{\frac{1}{2}}$
such that (\ref{mmpde-1}) is invariant under the scaling transformation $\M \to c \M$.

The mesh equation (\ref{mmpde-1}) should be modified properly for boundary vertices.
For fixed boundary vertices, the corresponding equations should be replaced by
\[
\frac{d \revi{\V{\eta}_i}}{d t}  = 0.
\]
For boundary vertices on a curve represented by $ \phi(\revi{\V{\eta}}) = 0$,
the corresponding mesh equations should be modified such that its normal component along
the curve is zero, i.e.,
\[
   \nabla \phi (\revi{\V{\eta}_i}) \cdot \frac{d \revi{\V{\eta}_i}}{d t} = 0.
\]

The mesh equation (\ref{mmpde-1}), along with proper modification for boundary vertices,
can be solved by any ODE solver. (Matlab's ODE solver ode15s is used in our computation.)
In principle, it should be integrated until a steady state is obtained. Since finding $\revi{\veta_i}$
just represents one step of the outer iteration, we integrate (\ref{mmpde-1}) only up to
$t = 1$ (with $\tau = 1/300$) to save CPU time.

We now discuss the choice and computation of the metric tensor.
We choose the metric tensor based on optimizing the $L^2$ norm of error for piecewise
linear interpolation \cite{Hua05b,HS03}. The main reason for this choice is that it is simple,
problem independent, and effective. It reads as
\begin{equation}
\M = {\det \left(\alpha_h I +  |H(u_h)| \right)}^{- \frac{1}{6}}
\left [ \alpha_h I  +  |H(u_h)| \right ] ,
\label{M-1}
\end{equation}
where $u_h$ is an approximate solution, $H(u_h)$ is the recovered Hessian of $u_h$, $|H(u_h)|$
is the eigen-decomposition of $H(u_h)$ with the eigenvalues being replaced by their absolute values,
and the regularization parameter $\alpha_h > 0$ is chosen such that
\begin{equation}
\label{alpha-1}
\int_\Omega \sqrt{\det(\M)} d \V{x}
= 2 \int_\Omega {\det\left( |H(u_h)|\right)}^{\frac{1}{3}} d \V{x} .
\end{equation}
It has been shown in \cite{KaHu2013} that when the recovered Hessian
satisfies a closeness assumption, a linear finite element solution of an elliptic boundary value problem on
a simplicial mesh computed using the moving mesh algorithm converges at a second order rate
as the mesh is refined.
Numerical examples presented later show that the same strategy seems to work well for polygonal meshes too.
We use a Hessian recovery method based
on a least squares fit. More specifically, a quadratic polynomial is constructed locally for each vertex
via least squares fitting to neighboring nodal function values and an approximate Hessian at the vertex
is then obtained by differentiating the polynomial.

\revDD{
It is worth pointing out that, while the current polygonal moving mesh algorithm is essentially the (triangular)
moving mesh algorithm \cite{HK2014} applied to a sub-triangulation of the polygonal mesh,
there is a difference in the computation of the metric tensor in the current algorithm
(referred to as the polygonal MMPDE algorithm) and a purely triangular moving mesh
method (referred to as a triangular MMPDE algorithm). In the latter case, the metric tensor
is commonly computed as a piecewise constant function
on the triangular elements. On the other hand, in each MMPDE outer iteration of the current polygonal MMPDE
algorithm, the metric is computed as a piecewise constant function on the polygonal elements. More specifically,
the metric is first computed at all vertices of the polygonal mesh $\T^{(k)}$.
Then, $\M^{(k)}|_T$ for each polygonal element $T\in \T^{(k)}$ is defined by taking average
of the metric at all vertices of $T$, and $\M_K$ is set to be equal to $\M^{(k)}|_T$
for any sub-triangle $K$ in the triangulation of $T$. In this way, each polygonal element
is considered as a whole. On the contrary, the metric tensor is generally discontinuous
on each polygonal element if it is computed as piecewise constant on sub-triangular elements.
Therefore, while these two approaches lead to similar meshes especially when the mesh is sufficiently fine,
they can result in polygonal meshes with different properties (as to be shown below).


Another difference between polygonal and triangular MMPDE algorithms lies in
the choice of the reference element(s). A triangular MMPDE algorithm typically uses
a single reference element $\hat{K}$. Then, by minimization, the algorithm tempts to make every physical element $K$
(measured in the metric $\M_K$) as similar to $\hat{K}$ as possible. Thus, the shape of each individual element
basically is independent of the shape of other elements.
On the other hand, for any polygon $T \in \T^{(k+1)}$, the polygonal MMPDE algorithm described above uses
the triangles in $\T_{\hat{T}_C}$ as the reference elements for their counterparts in $\T_{T}$,
and by minimization, the algorithm tempts to make each triangle (measured in the metric $\M_T$) in $\T_{T}$
as similar to its corresponding triangle in $\T_{\hat{T}_C}$ as possible. Since $\M_T$ is constant on $T$, this means
that $T$ is made to be similar to $\hat{T}_C$ as possible. In this sense, the polygonal MMPDE algorithm is actually
polygon-oriented.


The polygonal MMPDE algorithm inherits the non-tangling feature of the triangular algorithm.
By design, the inner iteration of the current algorithm is the same as a triangular MMPDE algorithm
on the sub-triangulation.  According to \cite{HK2017}, the triangular mesh generated by the MMPDE algorithm
during the inner iteration, $\T_{\T^{(k+1)}}$, will not tangle as long as $\T_{\T^{(k)}}$
does not tangle. Recall that the non-tangling feature of $\T_{\T^{(k+1)}}$ implies that none
of its edges crosses other edges at an interior point. Since the edges of $\T^{(k+1)}$ are also
edges of $\T_{\T^{(k+1)}}$, we know that none of the edges of $\T^{(k+1)}$ crosses any other edges
at an interior point and thus $\T^{(k+1)}$ does not tangle.
Recalling that the sub-triangulation $\T_{\T^{(k)}}$ is obtained from $\T^{(k)}$
via subdivision (b) (which triangulates each polygonal element
into triangular elements by connecting the arithmetic center (the anchor point) to all vertices of the element),
$\T_{\T^{(k)}}$ is guaranteed not to tangle when all of the polygonal elements of $\T^{(k)}$ are convex.
This, on one hand, shows the importance to maintain the convexity of the polygonal elements.
On the other hand, we should emphasize that the polygonal MMPDE algorithm works
as long as the sub-triangulation of each polygonal element (that is not necessarily convex)
does not tangle. Subdivision (b) works when polygons are not very far from being convex.
We can also use more sophisticated polygon partitioning algorithms that work for general polygons;
e.g., see \cite{deBerg2000}. This will be an interesting research topic for the near future.

We now explain the difference between the polygonal and triangular MMPDE algorithms by a numerical example.
For simplicity, assume that there is a continuous metric tensor $\M$ given in $\Omega$,
so that in each outer iteration the discrete metric tensor is computed by interpolating $\M$ instead of solving a PDE.
In Fig. \ref{fig:convexity}, we plot the mesh for the polygonal MMPDE algorithm
after 10 MMPDE outer iterations and meshes for the triangular MMPDE algorithm after 2 and 10 outer iterations.
Notice that for the latter case we only plot the polygonal counterpart instead of the sub-triangulation
in order to compare with the former case.
After 2 outer iterations, the triangular MMPDE starts to generate non-convex polygons,
shaded in gray in Fig. \ref{fig:convexity}.
The number of non-convex polygons increases as the iteration goes on.
On the other hand, the polygonal MMPDE does not generate any non-convex polygons for this example.
Although only the mesh after 10 iterations is plotted here, we have tested up to 100 iterations and
all polygons have remained convex.
Indeed, the mesh movement has become very small after about 5 iterations,
indicating a stationary solution has been reached.

The difference shown in Fig. \ref{fig:convexity} is understandable, and even expected. 
The triangular MMPDE moves individual triangles with their own metric,
while the polygonal MMPDE tends to move each individual polygon as a whole.
Because of this, the polygons in a polygonal MMPDE appear to be more `rigid'.
In Fig. \ref{fig:convexity}, we observe that the triangular MMPDE is slightly more flexible
as it generates thinner (better aligned) elements while the polygonal MMPDE leads to polygons
of convex shape. 

\begin{figure}[ht]
\begin{center}
\includegraphics[width=3cm]{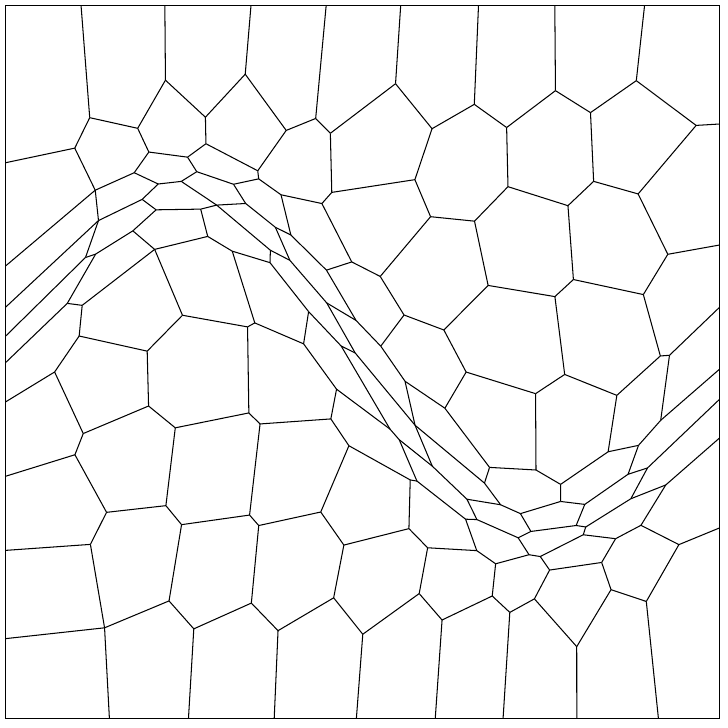}\quad
\includegraphics[width=3cm]{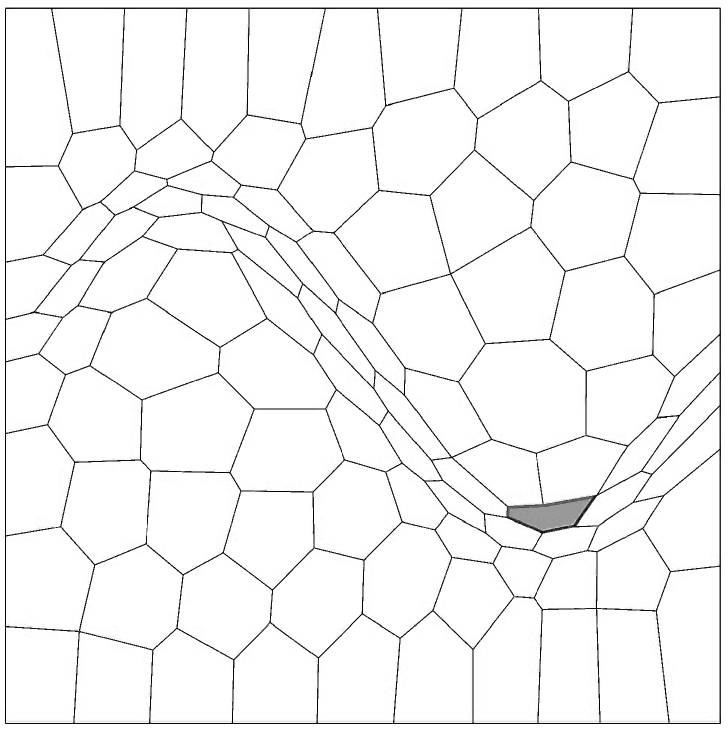}
\includegraphics[width=3cm]{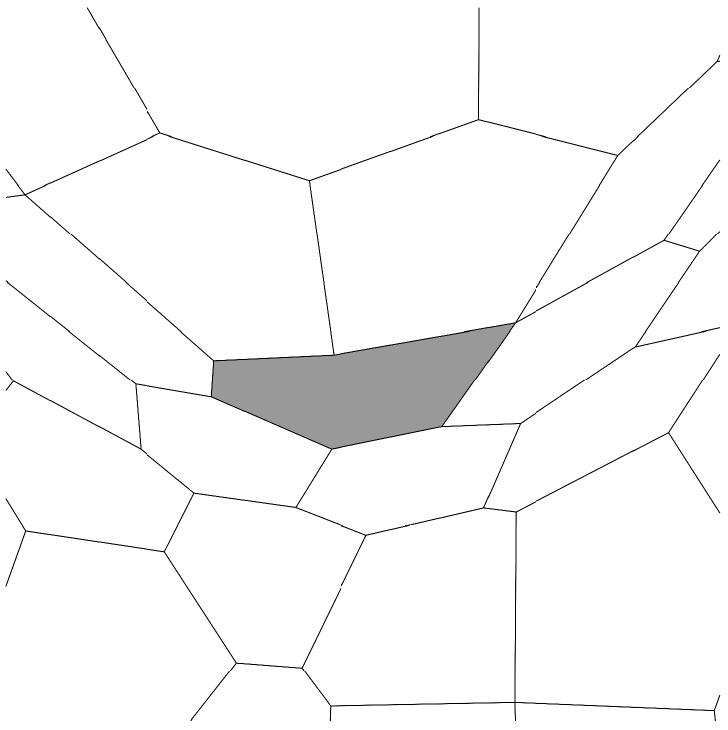}\quad
\includegraphics[width=3cm]{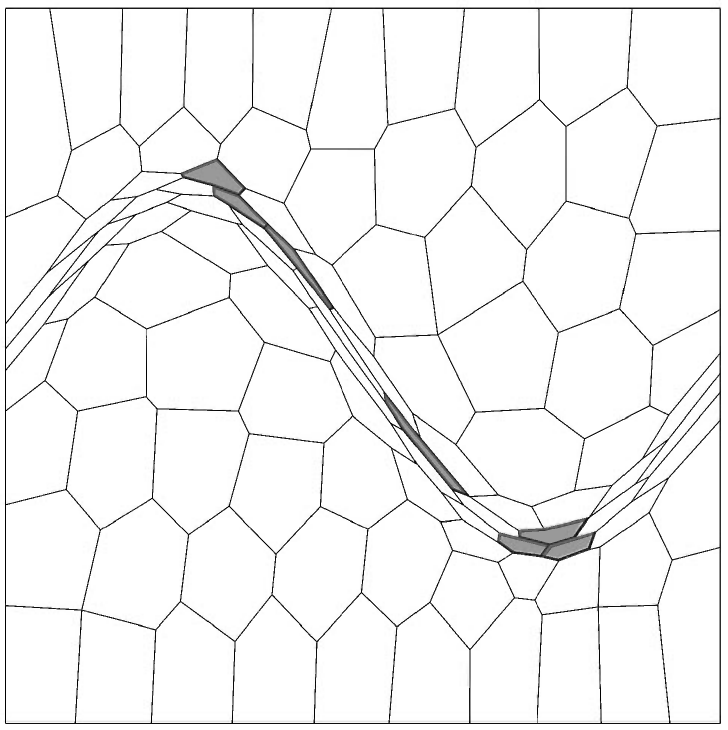}
\caption{The difference between the polygonal MMPDE algorithm and the triangular one applied to the sub-triangulation of the same initial mesh.
The left-most panel: mesh after 10 polygonal MMPDE outer iterations, with all polygons remaining convex;
the middle two panels: mesh after 2 triangular MMPDE outer iterations starts to have non-convex polygons (shaded in gray);
the right-most panel: mesh after 10 triangular MMPDE outer iterations has more non-convex polygons.}
\label{fig:convexity}
\end{center}
\end{figure}

Having all polygons remaining convex in a polygonal mesh is a much appreciated property.
Some numerical tools such as the Wachspress coordinates can only be defined on convex polygons.
Moreover, as pointed out earlier, the sub-triangulation of an all convex polygonal mesh is nonsingular
and hence it is guaranteed \cite{HK2017} that the mesh will not tangle in the next inner iteration.
However, currently there is no built-in mechanism in the polygonal MMPDE algorithm to prevent mesh elements from becoming non-convex,
although numerical results shown in Fig. \ref{fig:convexity} seem promising.
Such issues, as well as the convergence of the algorithm,
are generally difficult to analyze theoretically \cite{BuddHuangRussell09}.
On the other hand, the continuous meshing functional corresponding to
$I_h( \{\revi{\veta_i}\}, \{ \vx_i^{(k)}\})$ is known coercive and polyconvex and has a minimizer \cite{HR11}.
Moreover, numerical examples to be given in Section \ref{sec:numericalResults}
show that the algorithm is efficient and robust. 
}

\revi{
  As mentioned at the end of Section \ref{sec:compareMeasures}, we eliminate short edges in the initial CVT mesh $\T^{(0)}$.
  This ensures that the reference mesh $\hat{\T}_C$ is free of short edges.
  In the subsequent moving mesh iterations, one should not eliminate short edges since this will alter the mesh topology.
  Besides, short edges in highly anisotropic meshes may not be ``short'' when measured under the correct anisotropic measure.
  They need to be distinguished from the true ``short edges'', which are still ``short'' even when measured under the anisotropic measure.
  This work is left to $Q_{ali,2}$, which is able to detect the true ``short edges'' as shown in Section \ref{sec:compareMeasures}.
  We expect that by keeping $Q_{ali,2}$ and $Q_{eq,2}$ small, the mesh will contain few true ``short edges''.
  So far there is no theoretical analysis of this, but numerical results to be presented in Section \ref{sec:numericalResults}
  \revZ{show} that the polygonal moving mesh algorithm appears to handle the short edge issue well.
  }

Finally, we mention that the entire moving mesh algorithm can be extended to 3D
  straightforwardly. To this end, one simply needs to combine the discussion on 3D polyhedral mesh quality measures
  (presented at the end of Section \ref{sec:qualitymeasures}) and a 3D MMPDE framework (see for example \cite{HK2014}).

\section{Numerical results} \label{sec:numericalResults}
In this section, we present numerical results obtained with the moving mesh algorithm given in Section \ref{sec:MMPDE} for a \revZ{selection} of five examples.
We first consider 
the Poisson's equation $-\Delta u = f$ in $\Omega = (0,1)\times (0,1)$ subject to the Dirichlet boundary condition.
Two examples with the following exact solutions are tested,
\begin{align*}
& \text{\bf Example 1: }\qquad  u=\tanh(40y-80x^2) - \tanh(40x-80y^2);\\
&\text{\bf Example 2: } \qquad u =\sqrt{0.5(r-x)} - 0.25 r^2,\quad r=\sqrt{x^2+y^2}.
\end{align*}
These examples are solved on polygonal meshes using the Wachspress finite element method \cite{Wachspress75},
an $H^1$ conforming finite element method using the Wachspress barycentric coordinates as basis functions.
It is known that for smooth exact solutions and sufficiently fine shape-regular quasi-uniform polygonal meshes,
the Wachspress finite element method has the asymptotic convergence order $O(h)$ in $H^1$ semi-norm
and $O(h^2)$ in $L^2$ norm, where $h$ is the characteristic size of the mesh.
If one considers a quasi-uniform polygonal mesh with $N\times N$ polygons, the characteristic mesh size $h$
roughly equals $N^{-1}$. Consequently, the optimal asymptotic order of the approximation error
for the Wachspress finite element method can be expressed into $O(N^{-1})$ in $H^1$ semi-norm
and $O(N^{-2})$ in $L^2$ norm.

\begin{figure}[ht]
\begin{center}
\includegraphics[width=4cm]{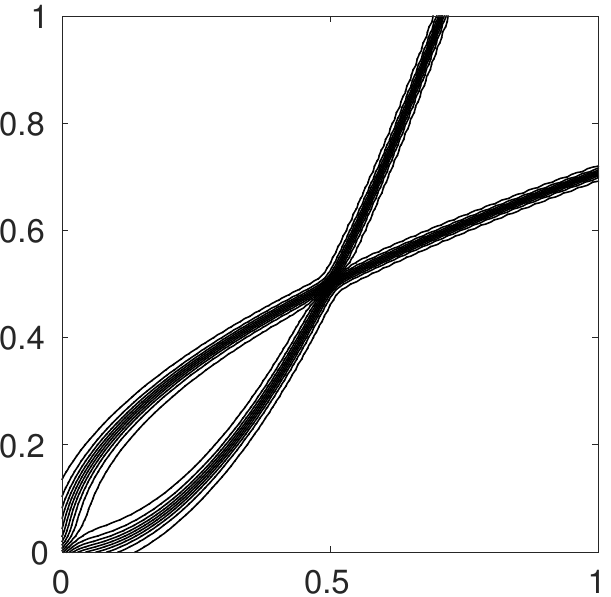}
\caption{Contour plot of the exact solution for Example 1.}
\label{fig:ex1contour}
\end{center}
\end{figure}

Let us briefly explain the reason why we pick these two examples.
For Example 1, the value of $u$ ranges from $-2$ to $2$ in $\Omega$.  An equidistant contour plot of $u$
is shown in Fig.~\ref{fig:ex1contour}. Apparently, $u$ changes rapidly near the curves
$y-2x^2 = 0$ and $x-2y^2 = 0$, while it is almost constant \revZ{elsewhere}. It exhibits a strong anisotropic behavior in the gradient and Hessian near those curves.
When a uniform isotropic mesh is used to discretize Example 1, the number of the elements has to be very large
to resolve the rapid changes of $u$ near the curves.
On the other hand, less elements can be used for the same accuracy when using an adaptive anisotropic mesh,
which is dense around the regions with the rapid changes of $u$ and coarse in places where $u$ is almost constant.
Here, we will show that the moving mesh algorithm described in Section
\ref{sec:MMPDE} is capable of generating high quality anisotropic adaptive polygonal meshes optimized for this problem in terms of
the $L^2$ norm of the approximation error.

Example 2 is a well-known example with a corner singularity at $(0,0)$, and the exact solution $u$ is in $H^{\frac{3}{2}-\varepsilon}(\Omega)$
for arbitrarily small $\varepsilon >0$.
When discretized using quasi-uniform meshes, the best approximation error that can be reached
has asymptotic order $O(h^{0.5})=O(N^{-0.5})$ in $H^1$ semi-norm and $O(h^{1.5})=O(N^{-1.5})$ in $L^2$ norm.
We emphasize that, no matter how fine the uniform mesh is, the asymptotic order of approximation error
cannot be improved due to the intrinsic low regularity of the exact solution.
Later on we will show that adaptive meshes generated by the moving mesh method
not only can lead to smaller errors but also improves the convergence order to the optimal $O(N^{-2})$
for the $L^2$ norm.

For both examples, we set $\Omega_C = \Omega$ and use CVTs generated on $\Omega_C$
by Lloyd's algorithm as the reference computational mesh $\hat{\T}_C$.
\revi{\revZ{This} mesh is processed so that short edges are eliminated by combining nearby vertices into one, 
as discussed in the last paragraph of Section \ref{sec:compareMeasures}.}
According to the numerical results given in Section \ref{sec:qualitymeasures},
the mesh $\hat{\T}_C$ has good quality under the Euclidean metric.
We take the initial physical mesh $\T^{(0)} = \hat{\T}_C$.

We start from a reference computational mesh $\hat{\T}_C$ with $32\times 32$ polygons and apply the moving mesh algorithm with 10 outer iterations
to Example 1. The initial mesh and the physical meshes after $1$ and $10$ outer iterations,
i.e., $\T^{(0)}$, $\T^{(1)}$ and $\T^{(10)}$, are shown in Fig.~\ref{fig:ex1-32-mesh}.
One can see that the meshes correctly capture the rapid changes of the solution (cf. Fig.~\ref{fig:ex1contour}).
In addition, a close view of $\T^{(10)}$ near $(0.5, 0.5)$ (shown in Fig.~\ref{fig:ex1-32-mesh})
clearly shows the anisotropic behavior of the mesh elements.
The history of alignment and equidistribution measures is reported in Fig.~\ref{fig:ex1-32-QQhistory}.
Here, $Q_{ali,1}$, $Q_{ali,2}$ and $Q_{eq,1}$, $Q_{eq,2}$ are computed by comparing
the physical mesh with the reference computational mesh $\hat{\T}_C$,
while $Q_{ali,3}$ and $Q_{eq,3}$ are computed by comparing each polygon $T$ in the physical
mesh with its own reference $K_T$.
One can see that the moving mesh algorithm reduces the mesh quality measures
although the reduction is not monotone. Moreover,
$Q_{ali,2}$ is slightly bigger than $Q_{ali,1}$ and $Q_{ali,3}$
and $Q_{eq,3}$  is slightly bigger than $Q_{eq,1}$.
These are consistent with what we have observed in Section \ref{sec:qualitymeasures}.
Fig.~\ref{fig:ex1-32-QQhistory} also shows that all three sets of mesh quality measures
have very similar evolution patterns.


\begin{figure}[ht]
\begin{center}
\includegraphics[width=3cm]{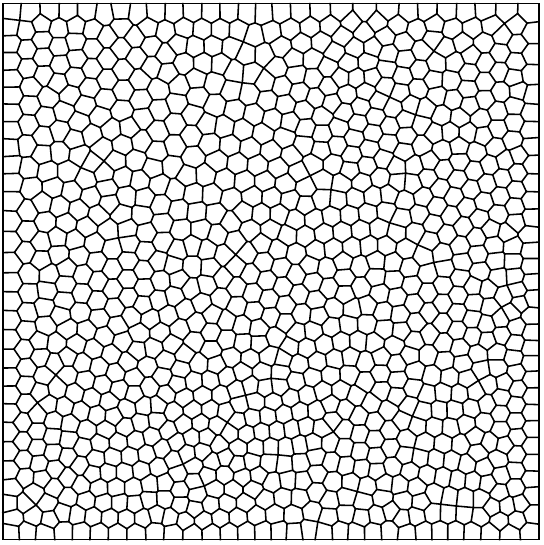}
\includegraphics[width=3cm]{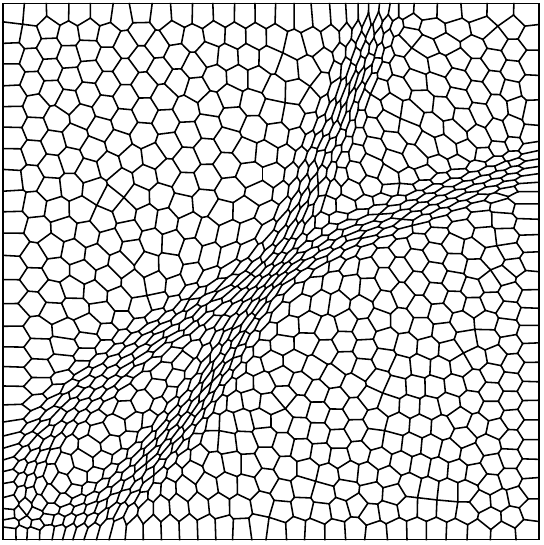}
\includegraphics[width=3cm]{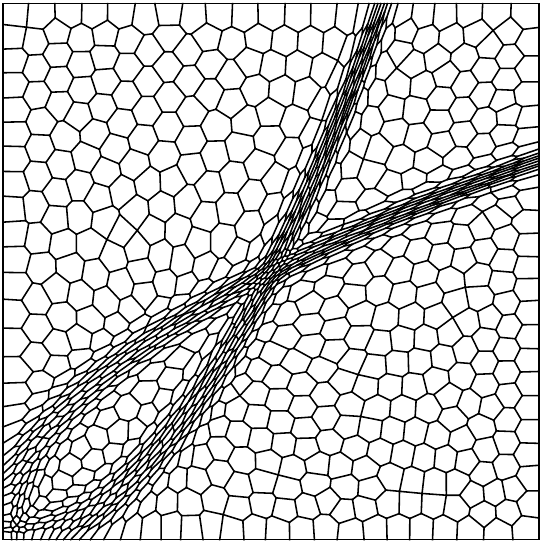}
\includegraphics[width=3cm]{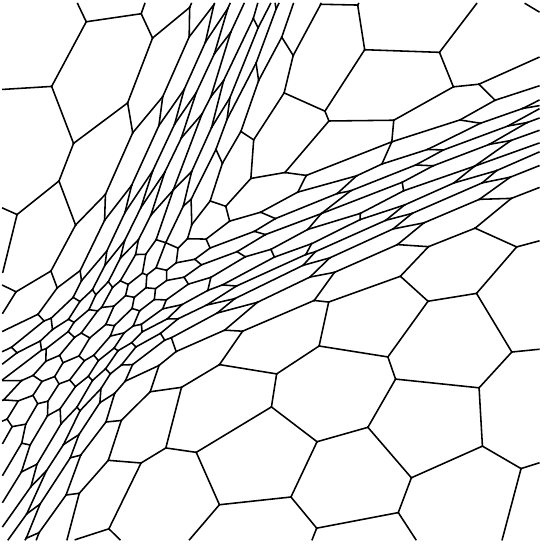}
\caption{Example 1, mesh with $32\times 32$ cells. From left to right:
$\T^{(0)}$, $\T^{(1)}$, $\T^{(10)}$, a close view of $\T^{(10)}$ near (0.5, 0.5).}
\label{fig:ex1-32-mesh}
\end{center}
\end{figure}

\begin{figure}[ht]
\begin{center}
\includegraphics[width=4.5cm]{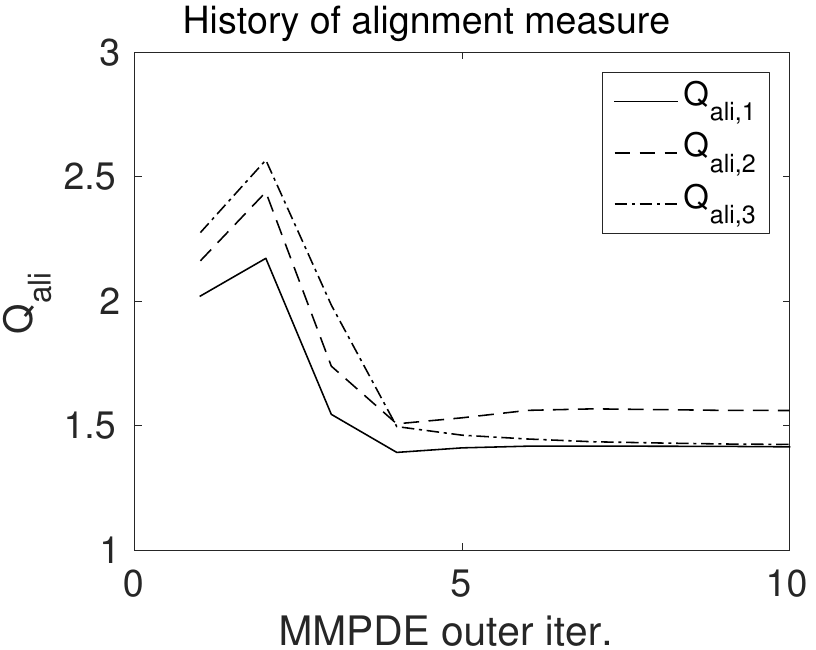}\quad\quad
\includegraphics[width=4.5cm]{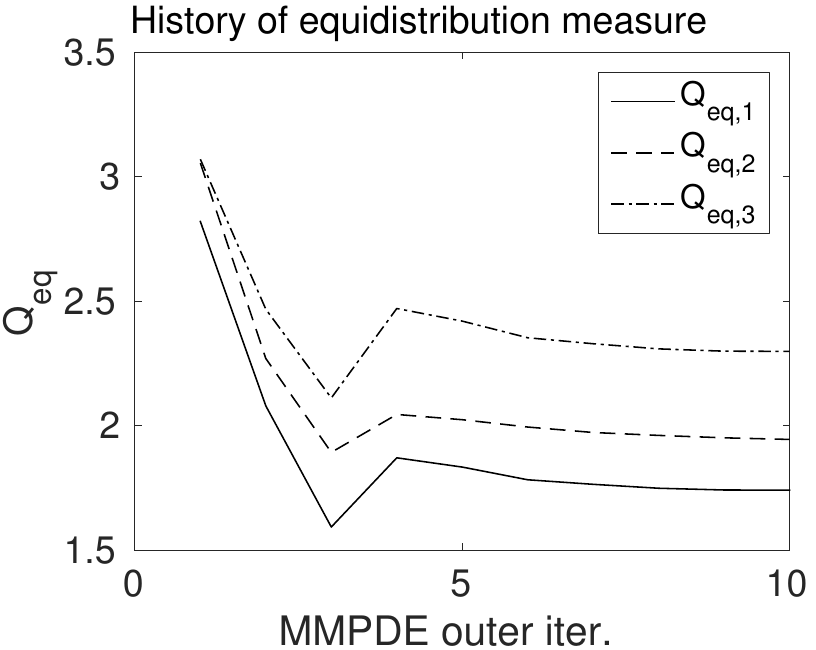}
\caption{Example 1, mesh with $32\times 32$ cells. History of $Q_{ali}$ and $Q_{eq}$.
Subdivision (b) was used for computing $Q_{ali,2}$ and $Q_{eq,2}$.} \label{fig:ex1-32-QQhistory}
\end{center}
\end{figure}

\begin{figure}[ht]
\begin{center}
\includegraphics[width=4.5cm]{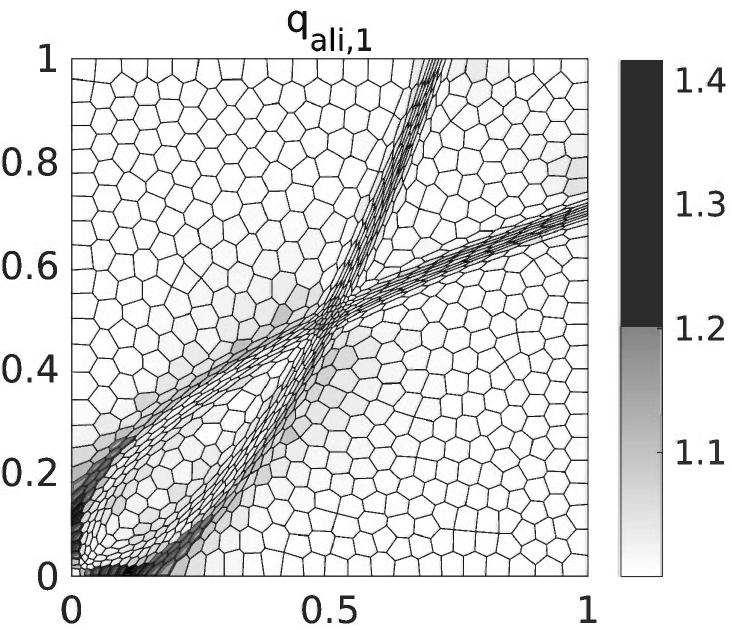}\quad\quad
\includegraphics[width=4.5cm]{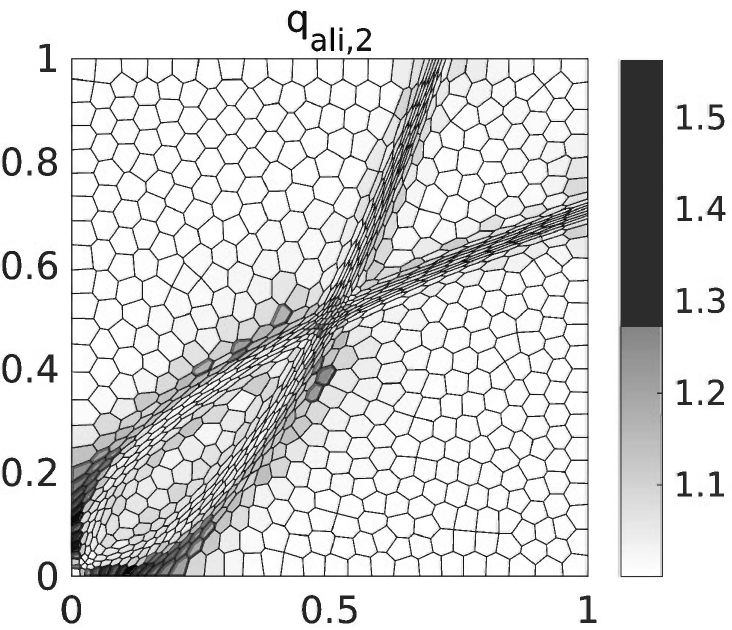}\quad\quad
\includegraphics[width=4.5cm]{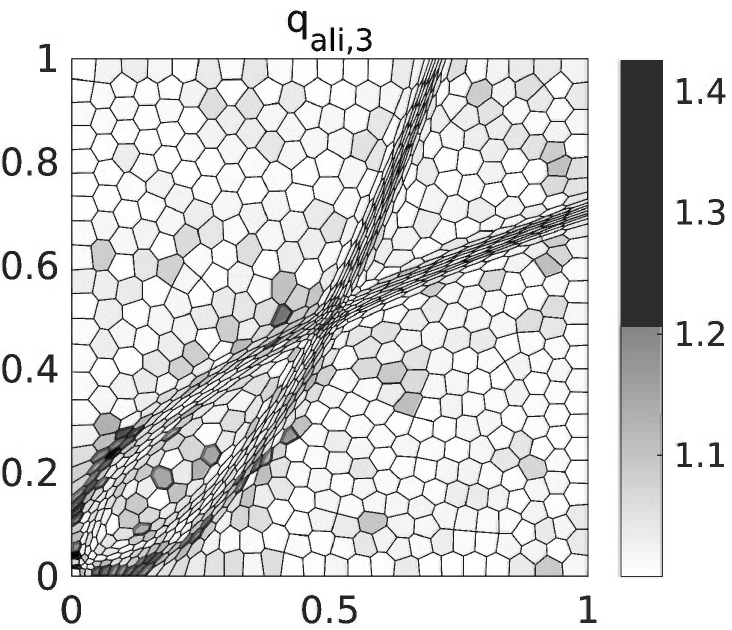}
\caption{Example 1, distribution of $q_{ali,1}$, $q_{ali,2}$ and $q_{ali,3}$ one mesh with $32\times 32$ cells after 10 MMPDE outer iterations.}
\label{fig:ex1-32-qali-dist}
\end{center}
\end{figure}

\begin{figure}[ht]
\begin{center}
\includegraphics[width=4.5cm]{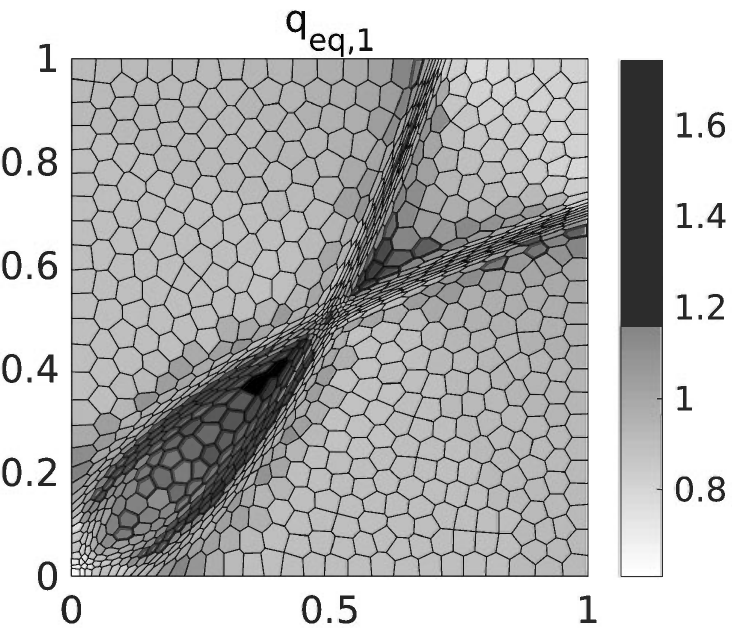}\quad\quad
\includegraphics[width=4.5cm]{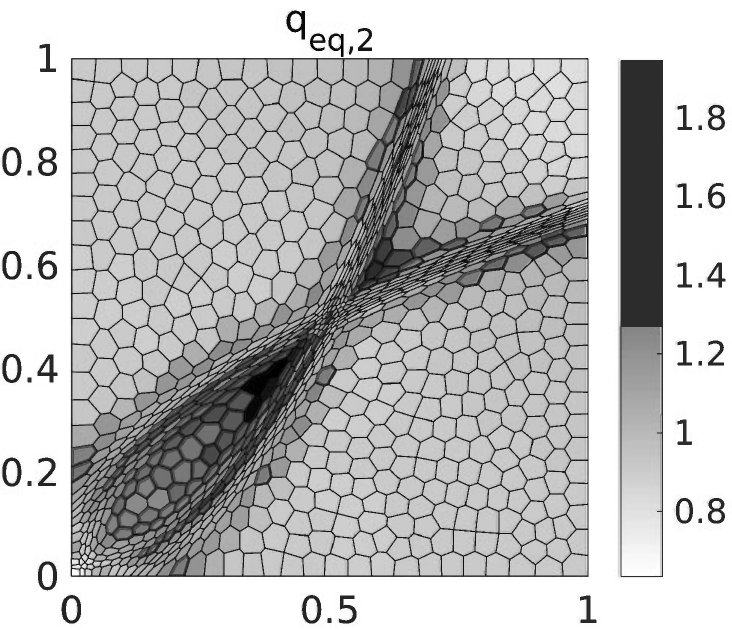}\quad\quad
\includegraphics[width=4.5cm]{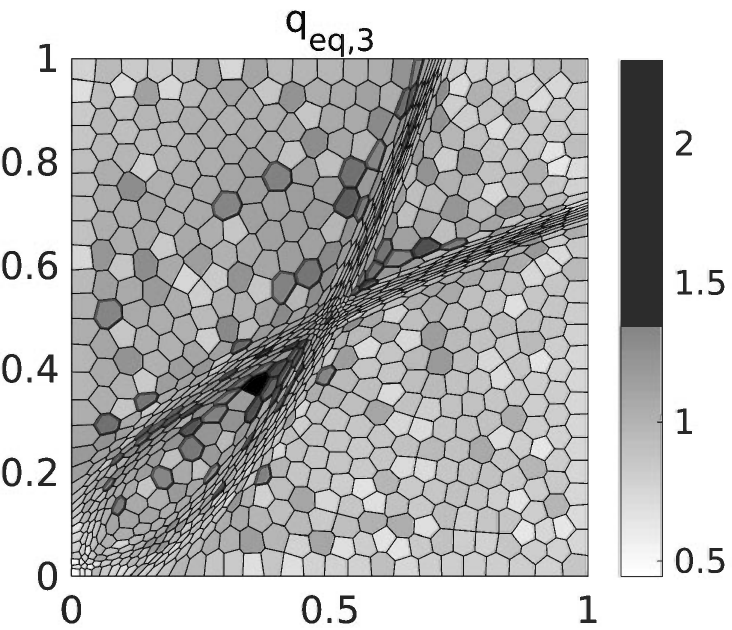}
\caption{Example 1, distribution of $q_{eq,1}$, $q_{eq,2}$ and $q_{eq,3}$ one mesh with $32\times 32$ cells after 10 MMPDE outer iterations.}
\label{fig:ex1-32-qeq-dist}
\end{center}
\end{figure}

\begin{figure}[ht]
\begin{center}
\includegraphics[width=4.5cm]{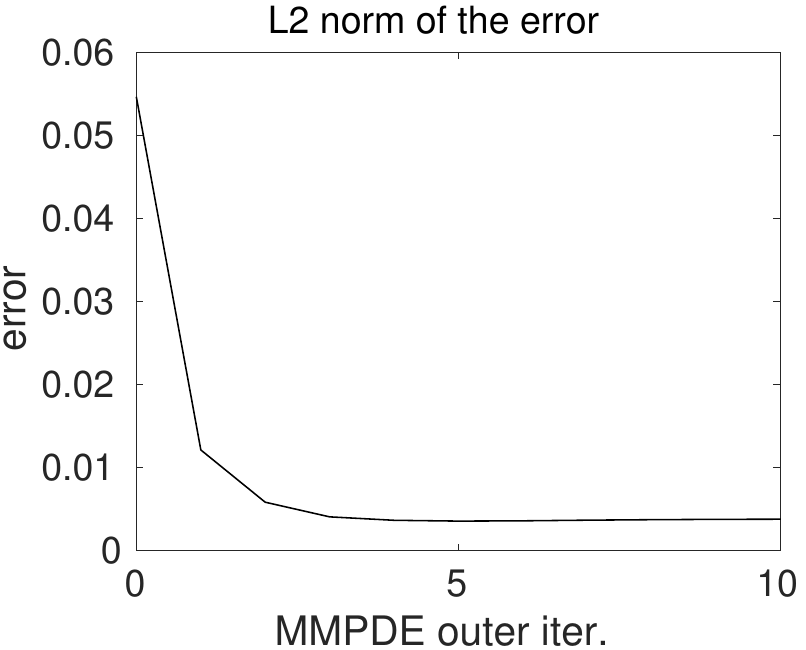}\quad\quad
\includegraphics[width=4.5cm]{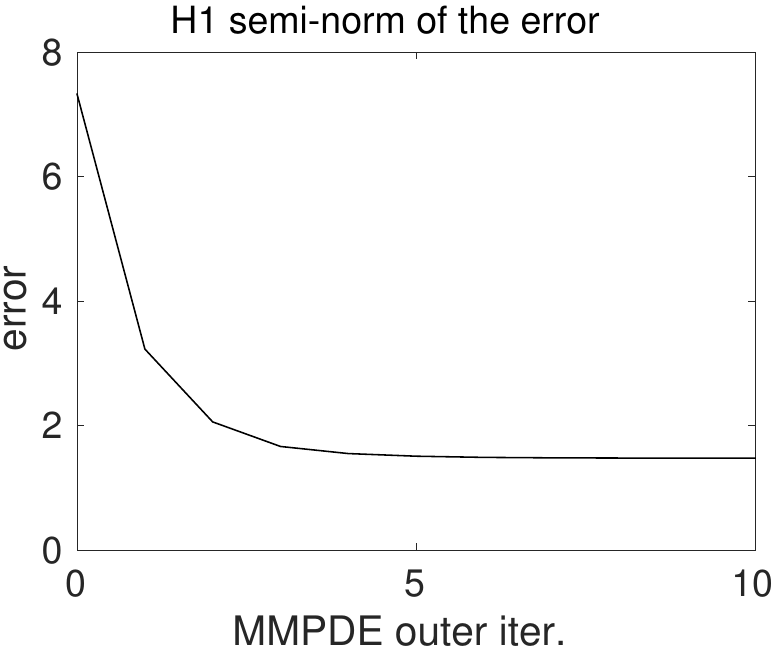}
\caption{Example 1, a mesh with $32\times 32$ cells. History of $L^2$ norm and $H^1$ semi-norm of the error $u-u_h$
is plotted as a function of the outer iteration number.}
\label{fig:ex1-32-errhistory}
\end{center}
\end{figure}

\revi{
  Although we have eliminated short edges from the initial CVT mesh,
  one may wonder whether \revZ{or not} new short edges, measured under the anisotropic metric \eqref{M-1}, are generated during the moving mesh iterations.
  The answer is negative because \revZ{$Q_{ali,2}$},
  which according to Section \ref{sec:compareMeasures} is sensitive to short edges, remains small \revZ{in Fig.~\ref{fig:ex1-32-QQhistory}}.
  To further illustrate this and also to compare three sets of mesh quality measures,
  we compute the piecewise constant functions $q_{ali,k}$ and $q_{eq,k}$, for $k=1,2,3$,
  on the $32\times 32$ mesh after $10$ outer iterations.
  Here
$$
\begin{aligned}
q_{ali,1} &= \frac{\trace ([E_T E_{T_C}^t (E_{T_C} E_{T_C}^t)^{-1}]^t \M_T [E_T E_{T_C}^t (E_{T_C} E_{T_C}^t)^{-1}]) }{2 \det ([E_T E_{T_C}^t (E_{T_C} E_{T_C}^t)^{-1}]^t \M_T [E_T E_{T_C}^t (E_{T_C} E_{T_C}^t)^{-1}])^{1/2}}, \\
q_{eq,1} &= \frac{\det(E_T E_{T_C}^t (E_{T_C} E_{T_C}^t)^{-1} ) \sqrt{\det(\M_T)} }{ \sigma_{h,1}},
\end{aligned}
$$
with $\sigma_{h,1}$ defined as in (\ref{sigma-1}),
and $q_{ali,k}$ and $q_{eq,k}$ for $k=2,3$ are defined similarly.
Note that the ranges of $q_{ali,k}$ and $q_{eq,k}$ are $[1,\infty)$ and $(0, \infty)$, respectively,
and $Q_{ali,k}$, $Q_{eq,k}$ are the maximum norm of $q_{ali,k}$, $q_{eq,k}$.
We report the distributions of $q_{ali,k}$ and $q_{eq,k}$ in Figs.~\ref{fig:ex1-32-qali-dist} \revZ{and} \ref{fig:ex1-32-qeq-dist}.
Clearly $q_{ali,2}$ is bounded above by $1.6$.
The mesh after 10 iterations contains no short edges, measured under the anisotropic metric \eqref{M-1}.
We also notice that $q_{ali,2}$ and $q_{eq,2}$ have patterns very similar to $q_{ali,1}$ and $q_{eq,1}$,
while $q_{ali,3}$ and $q_{eq,3}$ are slightly different.
This is an interesting phenomenon worthy of investigation in the future.
Based on the above observation, in the rest of this section we will only plot $q_{ali,1}$ and $q_{eq,1}$ in the distribution graphs.
}

A more important question is whether \revZ{or not} these adaptive meshes actually help reduce the approximation error.
To examine this, we computed the $L^2$ norm and the $H^1$ semi-norm of $u-u_h$, where $u_h$ is the finite element solution using the Wachspress
finite element method, on these adaptive meshes. The results are reported in Fig.~\ref{fig:ex1-32-errhistory}.
The figure clearly shows the effectiveness of the moving mesh method in reducing the approximation error,
as both the $L^2$ norm and the $H^1$ semi-norm of the error \revB{are reduced to about one-tenth} after applying 10 outer iterations.
Interestingly, although the algorithm does not reduce $Q_{ali}$ or $Q_{eq}$ monotonically, it seems to reduce
the $L^2$ norm and the $H^1$ semi-norm of the error monotonically for Example 1.
We also notice that the majority of reduction occurs within the first few outer iterations.

With the above observations, we continue testing the method for Example 1 \revZ{on} meshes with different numbers of cells.
Consider meshes with $N\times N$ polygonal cells, for $N=8$, 16, 32, 64, and 128.
Again, the reference computational meshes are taken as CVTs generated by Lloyd's algorithm.
Optimal rates of convergence, $O(N^{-1})$ in $H^1$ semi-norm and $O(N^{-2})$ in $L^2$ norm,
usually cannot be achieved when the mesh is not fine enough to resolve all detail of the solution.
In Table \ref{tab:ex1-conv}, it can be seen that on quasi-uniform mesh $\T^{(0)}$, asymptotic order
of errors in $H^1$ semi-norm
is less than $O(N^{-1})$ and improves as $N$ increases. The asymptotic order of the error in $L^2$ norm  on $\T^{(0)}$
appears to be larger than $O(N^{-2})$ at the beginning, which is indeed an indication that very coarse meshes cannot
fully resolve the detail of the solution and thus result in bad approximations. Again, when $N$ increases, the asymptotic order of the error in $L^2$ norm
improves until it reaches $O(N^{-2})$.

\begin{figure}[ht]
\begin{center}
\includegraphics[width=4.5cm]{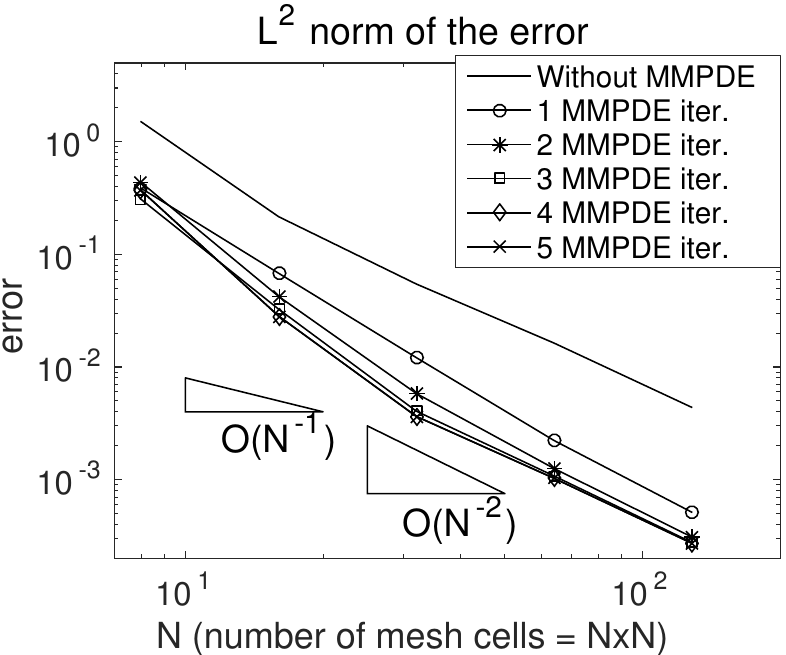}\quad\quad
\includegraphics[width=4.5cm]{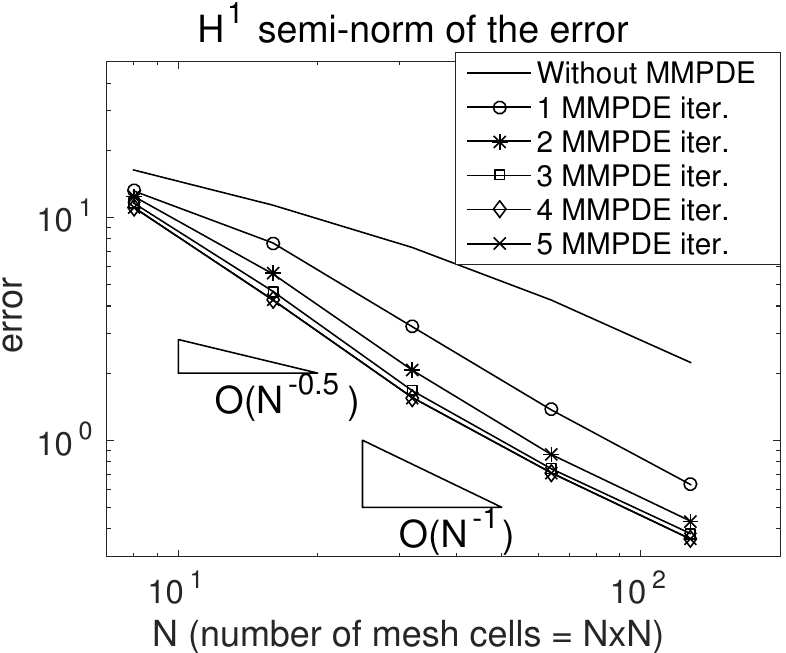}
\caption{Example 1, mesh with $N\times N$ cells, for $N=8$, 16, 32, 64, and 128.
The $L^2$ norm and the $H^1$ semi-norm of the error $u-u_h$ after different numbers of MMPDE outer iterations
are plotted as functions of $N$.} \label{fig:ex1-conv}
\end{center}
\end{figure}

\begin{table}[ht]
  \caption{Example 1, mesh with $N\times N$ cells, for $N=8$, 16, 32, 64, and 128.
The $L^2$ norm and the $H^1$ semi-norm of the error on $\T^{(0)}$, i.e., no MMPDE iteration, and $\T^{(5)}$, i.e., $5$ MMPDE iterations.
Here, the asymptotic order of the error is computed using two consecutive meshes with respect to $\frac{1}{N}$.}
\label{tab:ex1-conv}
  \begin{center}
    \begin{tabular}{|c|c|c|c|c|c|c|c|c|}
      \hline
      & \multicolumn{4}{|c|}{On mesh $\T^{(0)}$} & \multicolumn{4}{|c|}{On mesh $\T^{(5)}$} \\ \hline
      & \multicolumn{2}{|c|}{$L^2$ norm} & \multicolumn{2}{|c|}{$H^1$ semi-norm} & \multicolumn{2}{|c|}{$L^2$ norm} & \multicolumn{2}{|c|}{$H^1$ semi-norm} \\ \hline
      $N$ & error & order & error & order & error & order & error & order \\ \hline
      8  & 1.50e+0 &&  1.63e+1 && 4.15e-1 && 1.16e+1 &  \\ \hline
      16 & 2.16e-1 & 2.8 & 1.13e+1 & 0.5 & 2.65e-2 & 4.0 & 4.17e+0 & 1.5 \\ \hline
      32 & 5.45e-2 & 2.0 & 7.32e+0 & 0.6 & 3.54e-3 & 2.9 & 1.51e+0 & 1.5 \\ \hline
      64 & 1.63e-2 & 1.7 & 4.25e+0 & 0.8 & 1.01e-3 & 1.8 & 7.04e-1 & 1.1 \\ \hline
      128& 4.39e-3 & 1.9 & 2.23e+0 & 0.9 & 2.73e-4 & 1.9 & 3.51e-1 & 1.0 \\ \hline
    \end{tabular}
  \end{center}
\end{table}

Both the $H^1$ semi-norm and $L^2$ norm of the approximation error are significantly improved
by the anisotropic adaptive mesh algorithm.
In Fig.~\ref{fig:ex1-conv}, the approximation error is reported for different mesh sizes and
different numbers of MMPDE outer iterations.
We can see that the approximation error can be effectively reduced by
performing just a few MMPDE outer iterations.
To better compare the quantity of the error, we also list the results on the initial mesh
and on the physical mesh after 5 MMPDE outer iterations in Table~\ref{tab:ex1-conv}.
Asymptotic orders in terms of $\frac{1}{N}$ are reported in the table too.
Improvements in both the $H^1$ semi-norm and the $L^2$ norm and in convergence order
can be observed.

\revA{
  In the implementation, since there is no general quadrature rule on polygons,
  we split the polygons into sub-triangles and use Gaussian quadrature on the sub-triangles to evaluate integrals.
  The Gaussian quadratures are exact for polynomials of corresponding degrees.
  But the Wachspress finite element method uses rational functions as basis functions, which cannot be integrated exactly as polynomials.
  It is not clear how much the inexact numerical integration affects the convergence rates.
  This is a known problem to the community working on generalized barycentric coordinates, and is waiting to be investigated.
  So far, numerical results presented in \revZ{Table} \ref{tab:ex1-conv} appear to \revZ{meet} the expectations.
  }

Next, we test Example 2 under the same settings as for Example 1.
We start from an initial mesh with $32\times 32$ cells and apply the MMPDE algorithm.
The initial mesh and physical meshes after $1$ and $10$ outer iterations of MMPDE are shown in
Fig.~\ref{fig:ex2-32-mesh}.
The history of $Q_{ali}$ and $Q_{eq}$ and the $L^2$ norm and $H^1$ semi-norm of the error
is reported in Figs.~\ref{fig:ex2-32-QQhistory} and \ref{fig:ex2-32-errhistory}.
One may notice that $Q_{ali,1}$, $Q_{ali,2}$, and $Q_{ali,3}$ stay almost constant
while $Q_{eq,1}$, $Q_{eq,2}$, and $Q_{eq,3}$ increase slightly during the MMPDE iterations.
We first point out that the corner singularity in this example is essentially isotropic.
Therefore, any isotropic mesh has good alignment under any metric based
on the recovered Hessian of the solution. As a result, we expect that
$Q_{ali}$ remains small and constant during the MMPDE outer iterations.
\revi{To better illustrate this, in Fig.~\ref{fig:ex2-32-distribution} we plot the distribution of
$q_{ali,1}$ and $q_{eq,1}$.
The left panel of Fig.~\ref{fig:ex2-32-distribution} shows that
$q_{ali,1}$ stays almost constant on the entire domain and its distribution is independent of
the corner singularity.
}

On the other hand, we recall that the exact Hessian for this example is infinite at $(0,0)$.
Thus, \revC{the more adapted the mesh is}, more elements are moved toward the origin
and \revA{the computed Hessian becomes larger at the corner as it gets closer to the true Hessian, 
          which is infinite at the origin}.
\revC{Since the metric is computed using the recovered Hessian, it also becomes very large at the origin.}
As a consequence, it is harder to generate a mesh with perfect equidistribution
and we see that $Q_{eq}$ is getting bigger as the MMPDE outer iterations.
Moreover, $Q_{eq}$ is defined as the maximum norm of $q_{eq}$, which tells about
the mesh quality of worst polygons. As shown in the right panel of Fig.~\ref{fig:ex2-32-distribution},
worst polygons occur near the origin.
The $L^2$ norm of the quantities is shown in Fig.~\ref{fig:ex2-32-QQhistoryL2}.
The results show that the $L^2$ norm decreases almost monotonically,
indicating that the MMPDE algorithm has successfully improved the quality of the majority of mesh elements
although the worst polygons remain ``bad''.

The $L^2$ norm and $H^1$ semi-norm of the approximation error are shown in Fig.~\ref{fig:ex2-32-errhistory}.
Both of them decrease monotonically for Example 2,
which further suggests that the MMPDE algorithm works effectively for Example 2.

\begin{figure}[ht]
\begin{center}
\includegraphics[width=3cm]{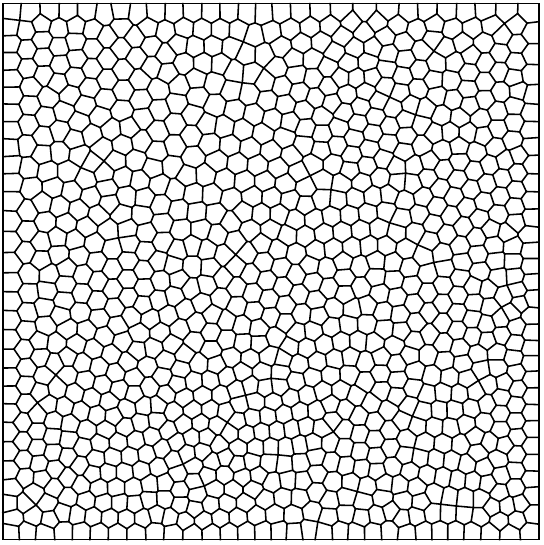}
\includegraphics[width=3cm]{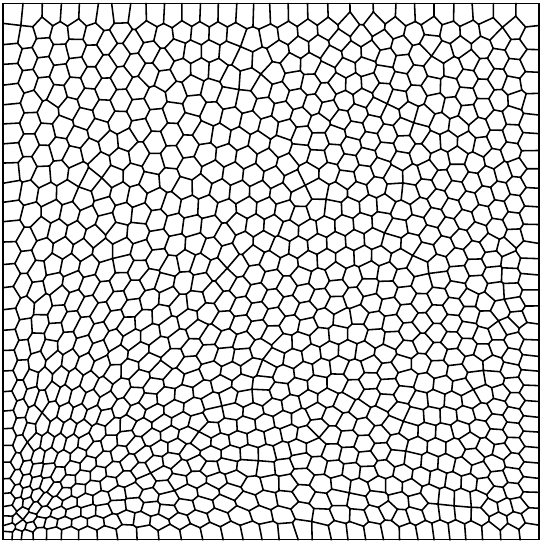}
\includegraphics[width=3cm]{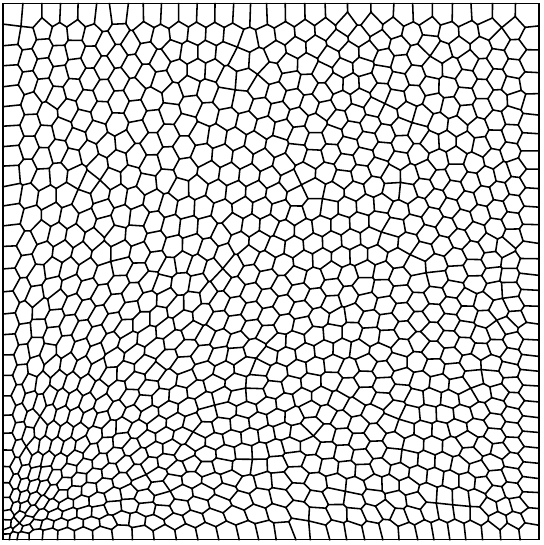}
\includegraphics[width=3cm]{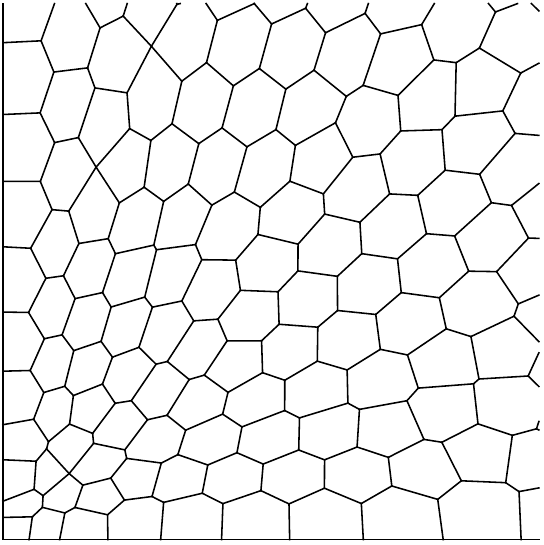}
\caption{Example 2, mesh with $32\times 32$ cells. From left to right:
$\T^{(0)}$, $\T^{(1)}$, $\T^{(10)}$, a close view of $\T^{(10)}$ near the origin.}
\label{fig:ex2-32-mesh}
\end{center}
\end{figure}

\begin{figure}[ht]
\begin{center}
\includegraphics[width=4.5cm]{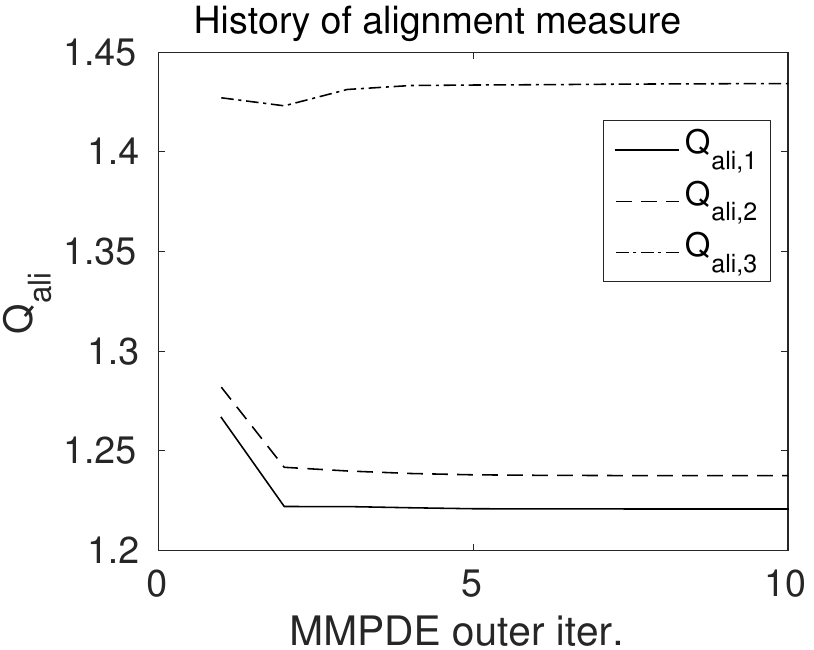}\quad\quad
\includegraphics[width=4.5cm]{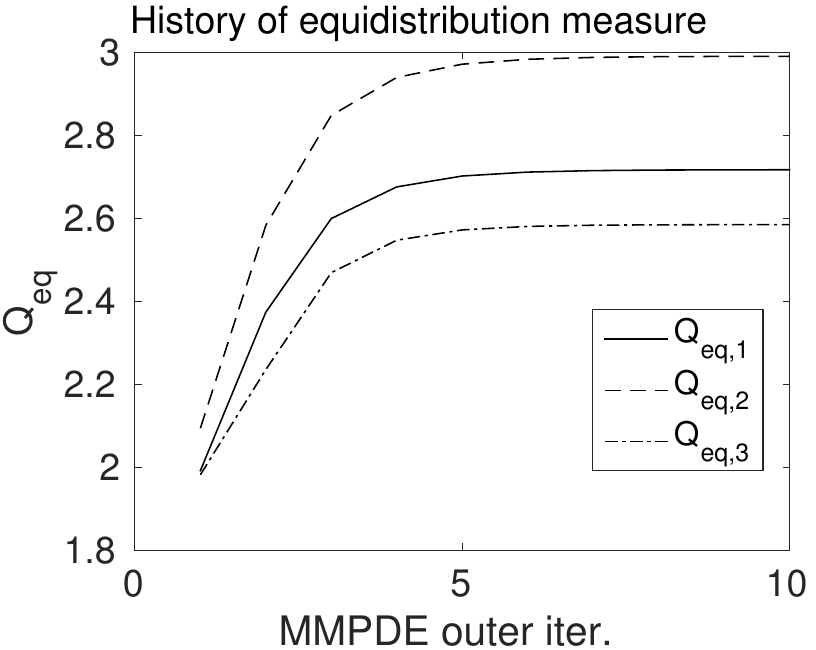}
\caption{Example 2, mesh with $32\times 32$ cells. History of $Q_{ali}$ and $Q_{eq}$.
Subdivision (b) was used for $Q_{ali,2}$ and $Q_{eq,2}$.} \label{fig:ex2-32-QQhistory}
\end{center}
\end{figure}

\begin{figure}[ht]
\begin{center}
\includegraphics[width=4.5cm]{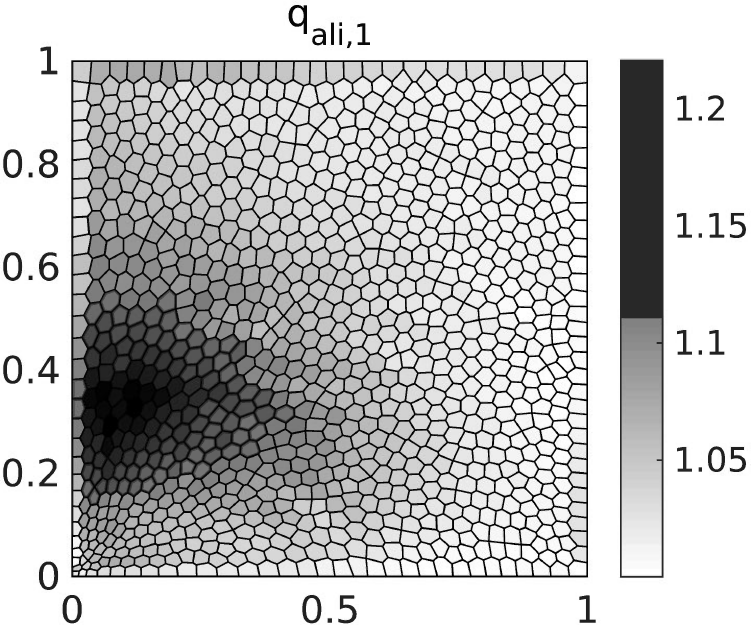}\quad\quad
\includegraphics[width=4.5cm]{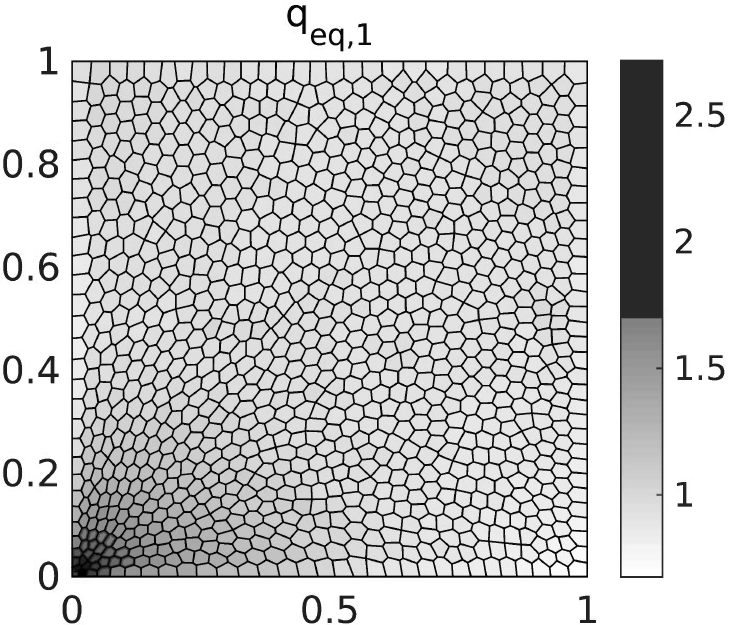}
\caption{Example 2, distribution of $q_{ali,1}$ and $q_{eq,1}$ on mesh with $32\times 32$ cells after 10 MMPDE outer iterations.}
\label{fig:ex2-32-distribution}
\end{center}
\end{figure}

\begin{figure}[ht]
\begin{center}
\includegraphics[width=4.5cm]{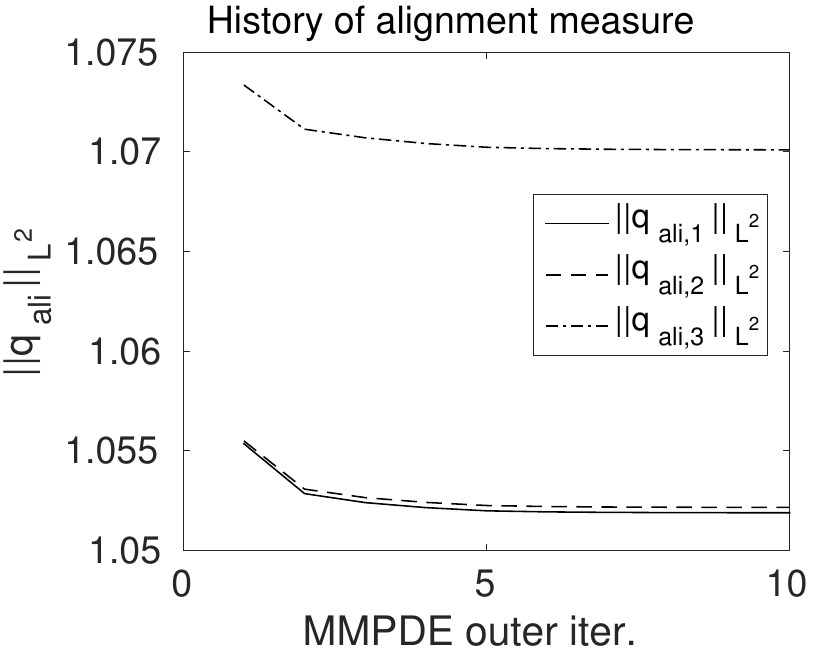}\quad\quad
\includegraphics[width=4.5cm]{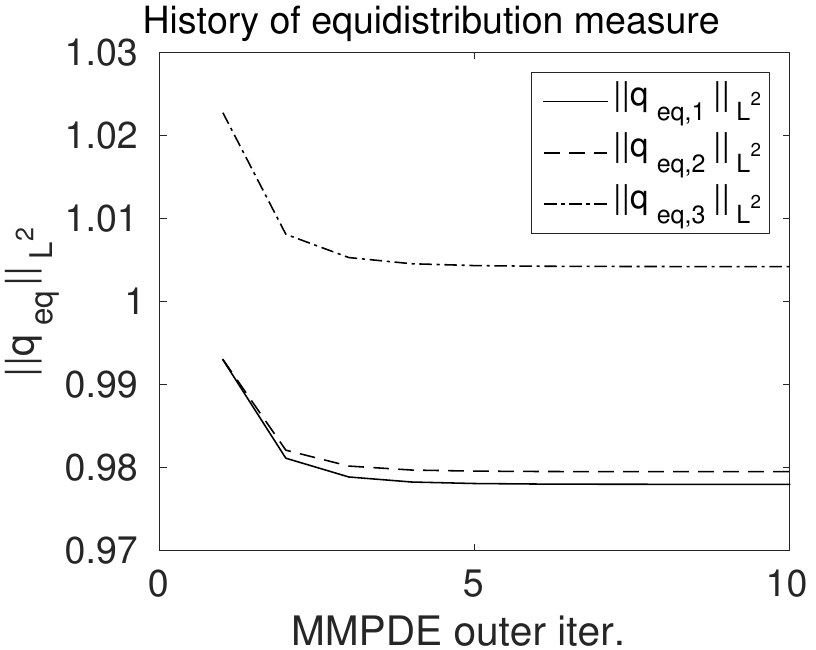}
\caption{Example 2, mesh with $32\times 32$ cells. History of $\|q_{ali}\|_{L^2}$ and $\|q_{eq}\|_{L^2}$.
Subdivision (b) was used for $Q_{ali,2}$ and $Q_{eq,2}$.} \label{fig:ex2-32-QQhistoryL2}
\end{center}
\end{figure}

\begin{figure}[ht]
\begin{center}
\includegraphics[width=4.5cm]{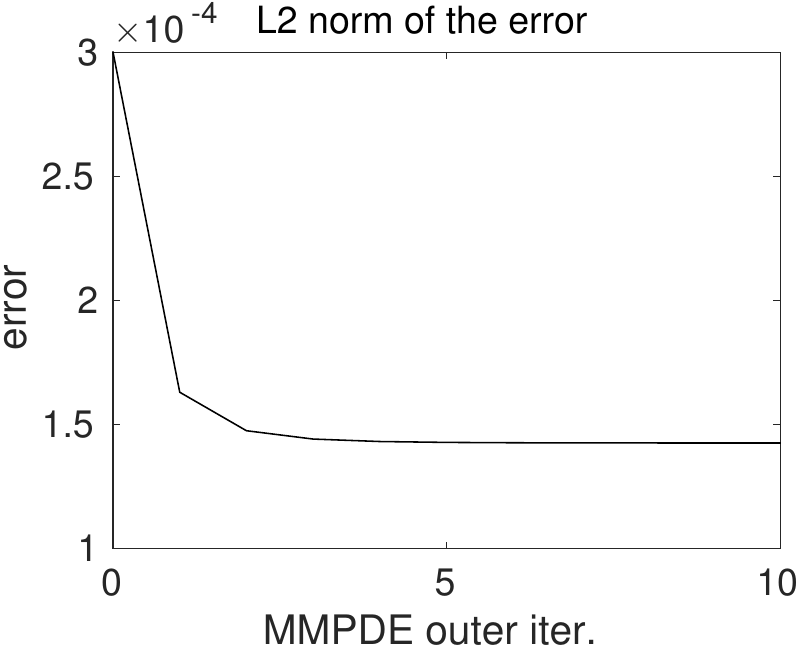}\quad\quad
\includegraphics[width=4.5cm]{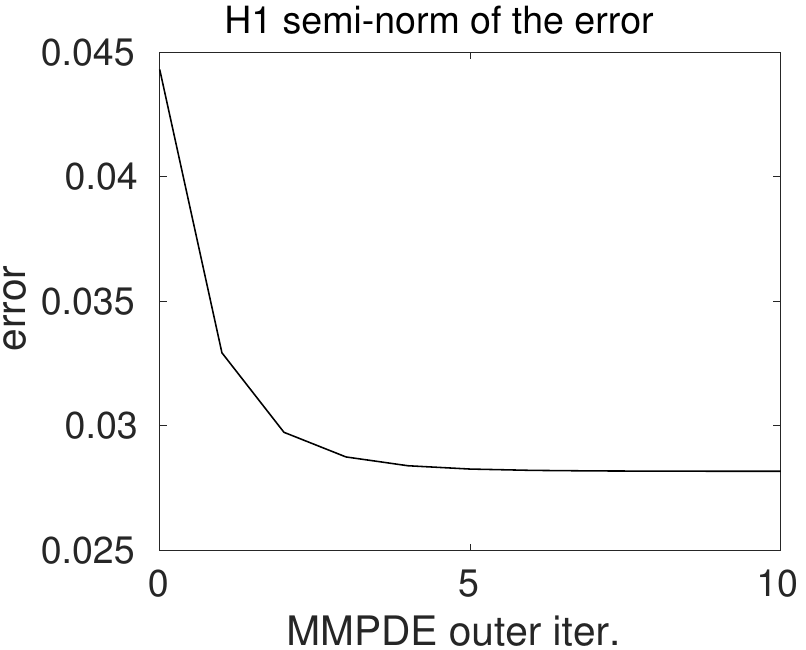}
\caption{Example 2, mesh with $32\times 32$ cells. History of $L^2$ norm and $H^1$ semi-norm of the error $u-u_h$.} \label{fig:ex2-32-errhistory}
\end{center}
\end{figure}

We have also tested the MMPDE algorithm for Example 2 on $N\times N$ meshes with different $N$.
As mentioned earlier, the error $u-u_h$ has at best the asymptotic order $O(N^{-0.5})$
in $H^1$ semi-norm and $O(N^{-1.5})$ in $L^2$ norm on $N\times N$ quasi-uniform meshes.
This can be seen clearly in Table~\ref{tab:ex2-conv}.
In Fig.~\ref{fig:ex2-conv} and Table~\ref{tab:ex2-conv}, the error for Example 2 under different mesh sizes
and MMPDE outer iterations is reported. From Fig.~\ref{fig:ex2-conv}, it is clear that increasing the number of MMPDE outer iterations affects the order of $L^2$ norm  and $H^1$ semi-norm of the approximation error.
The numerical values of the asymptotic order reported in Table \ref{tab:ex2-conv} give a clearer comparison.
After $5$ MMPDE outer iterations, the $L^2$ norm of the approximation error achieves almost $O(N^{-2})$.
The asymptotic order of $H^1$ semi-norm also improves, but not to the optimal $O(N^{-1})$ order.
This may be because the algorithm is designed to optimize the $L^2$ norm of the error
instead of the $H^1$ semi-norm of the error.
To see this, we use the metric tensor
\begin{equation}
\M = {\det \left(\alpha_h I +  |H(u_h)| \right)}^{- \frac{1}{4}}
\| \alpha_h I +  |H(u_h)| \|^{\frac{1}{2}}
\left [ \alpha_h I  +  |H(u_h)| \right ] ,
\label{M-2}
\end{equation}
where $\alpha_h$ is determined through 
$$
\revZ{\int_\Omega \sqrt{\det(\M)} d \V{x}= 2 \int_\Omega {\det\left( |H(u_h)|\right)}^{\frac{1}{4}} \big\| |H(u_h)| \big\|^{\frac{1}{2}} d \V{x} .}
$$
This metric tensor
is based on minimizing the $H^1$ semi-norm of linear interpolation error \cite{Hua05b}.
The numerical results are shown in Fig.~\ref{fig:ex2-convH1} and Table~\ref{tab:ex2-convH1}.
The convergence order in the $H^1$ semi-norm improves (around 0.9) while maintaining
the second order rate for the $L^2$ norm.

\begin{figure}[ht]
\begin{center}
\includegraphics[width=4.5cm]{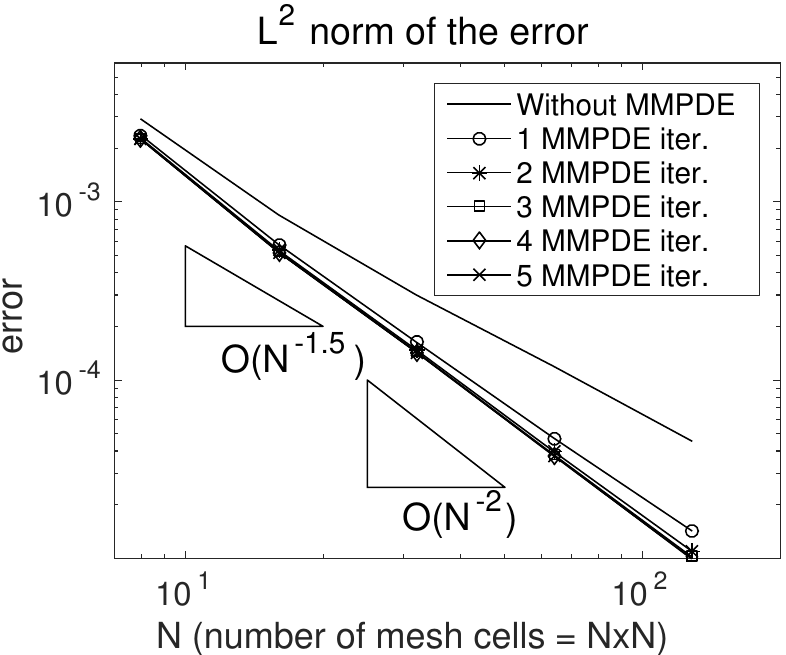}\quad\quad
\includegraphics[width=4.5cm]{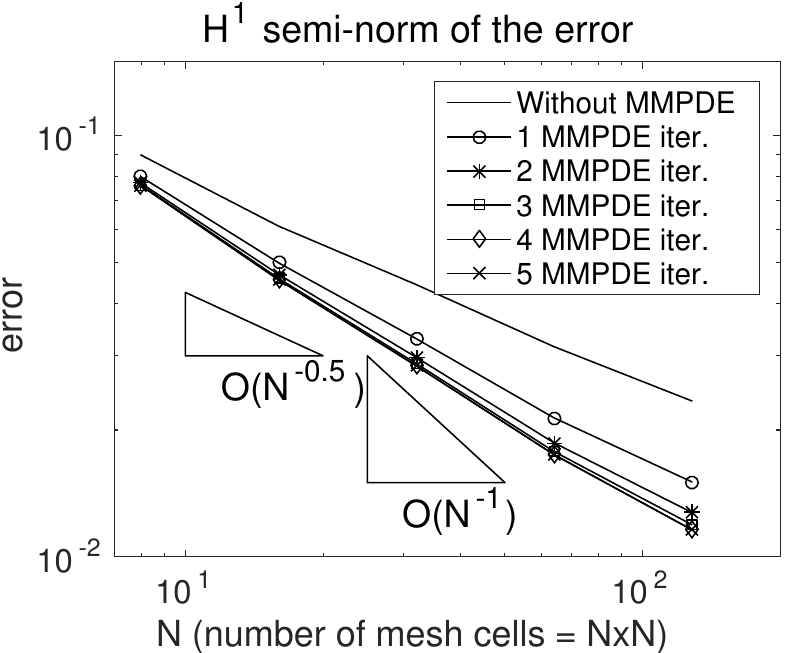}
\caption{Example 2, mesh with $N\times N$ cells, for $N=8,16,32,64,128$.
The $L^2$ norm and $H^1$ semi-norm of the error $u-u_h$ after different numbers of MMPDE outer iterations
are plotted as functions of $N$.}
\label{fig:ex2-conv}
\end{center}
\end{figure}

\begin{table}[ht]
  \caption{Example 2, mesh with $N\times N$ cells, for $N=8$, 16, 32, 64, and 128.
The $L^2$ norm and $H^1$ semi-norm of the error on $\T^{(0)}$, i.e., no MMPDE iteration,
and $\T^{(5)}$, i.e., $5$ MMPDE iterations.
The asymptotic order of the error is computed using two meshes with consecutive number of cells with respect to $\frac{1}{N}$.
}
\label{tab:ex2-conv}
  \begin{center}
    \begin{tabular}{|c|c|c|c|c|c|c|c|c|}
      \hline
      & \multicolumn{4}{|c|}{On mesh $\T^{(0)}$} & \multicolumn{4}{|c|}{On mesh $\T^{(5)}$} \\ \hline
      & \multicolumn{2}{|c|}{$L^2$ norm} & \multicolumn{2}{|c|}{$H^1$ semi-norm} & \multicolumn{2}{|c|}{$L^2$ norm} & \multicolumn{2}{|c|}{$H^1$ semi-norm} \\ \hline
      $N$ & error & order & error & order & error & order & error & order \\ \hline
      8  & 2.90e-3 &&  8.99e-2 && 2.23e-3 && 7.59e-2 &  \\ \hline
      16 & 8.43e-4 & 1.8 & 6.09e-2 & 0.6 & 5.19e-4 & 2.1 & 4.53e-2 & 0.7  \\ \hline
      32 & 3.00e-4 & 1.5 & 4.43e-2 & 0.5 & 1.43e-4 & 1.9 & 2.83e-2 & 0.7  \\ \hline
      64 & 1.19e-4 & 1.3 & 3.15e-2 & 0.5 & 3.74e-5 & 1.9 & 1.74e-2 & 0.7  \\ \hline
      128& 4.55e-5 & 1.4 & 2.35e-2 & 0.4 & 9.84e-6 & 1.9 & 1.14e-2 & 0.6  \\ \hline
    \end{tabular}
  \end{center}
\end{table}

\begin{figure}[ht]
\begin{center}
\includegraphics[width=4.5cm]{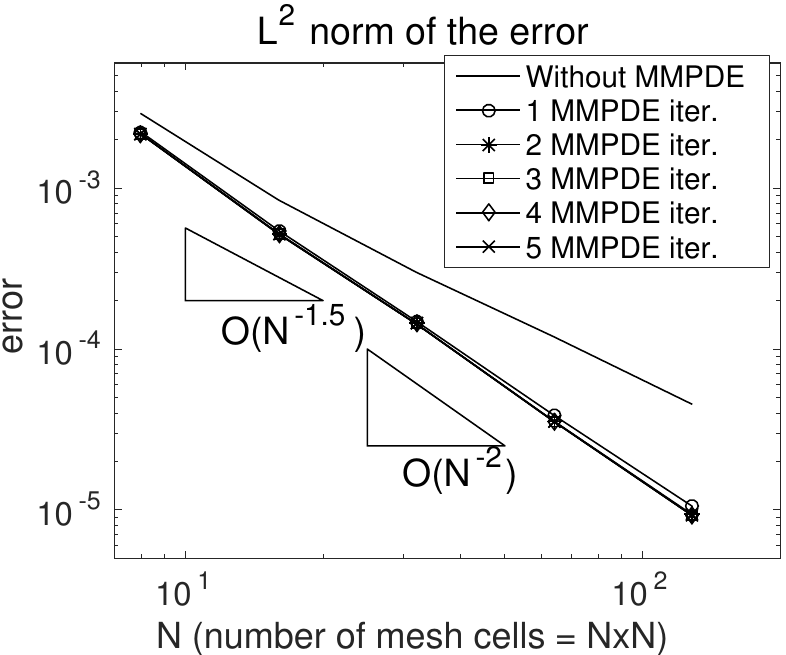}\quad\quad
\includegraphics[width=4.5cm]{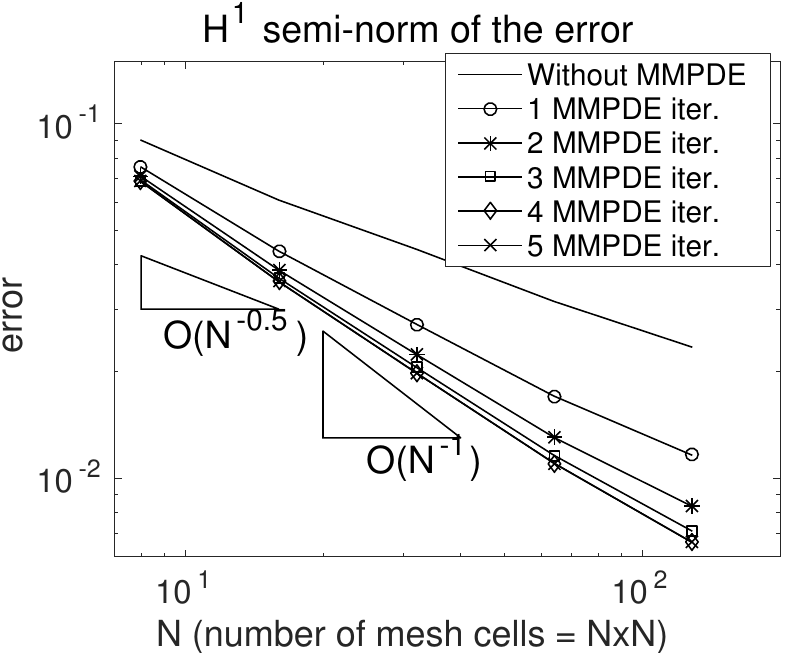}
\caption{Example 2, mesh with $N\times N$ cells, for $N=8,16,32,64,128$.
Optimized for $H^1$ semi-norm \revB{instead of} $L^2$ norm.
The $L^2$ norm and $H^1$ semi-norm of the error $u-u_h$ after different numbers of MMPDE outer iterations
are plotted as functions of $N$.
$H^1$ semi-norm based metric tensor (\ref{M-2}) is used.
} \label{fig:ex2-convH1}
\end{center}
\end{figure}

\begin{table}[ht]
  \caption{Example 2, mesh with $N\times N$ cells, for $N=8$, 16, 32, 64, and 128.
Optimized for $H^1$ semi-norm instead of for $L^2$ norm.
The $L^2$ norm and $H^1$ semi-norm of the error on $\T^{(0)}$, i.e., no MMPDE iteration,
and $\T^{(5)}$, i.e., $5$ MMPDE iterations.
The asymptotic order of the error is computed using two meshes with consecutive number
of cells with respect to $\frac{1}{N}$.
 $H^1$ semi-norm based metric tensor (\ref{M-2}) is used.
}
\label{tab:ex2-convH1}
  \begin{center}
    \begin{tabular}{|c|c|c|c|c|c|c|c|c|}
      \hline
      & \multicolumn{4}{|c|}{On mesh $\T^{(0)}$} & \multicolumn{4}{|c|}{On mesh $\T^{(5)}$} \\ \hline
      & \multicolumn{2}{|c|}{$L^2$ norm} & \multicolumn{2}{|c|}{$H^1$ semi-norm} & \multicolumn{2}{|c|}{$L^2$ norm} & \multicolumn{2}{|c|}{$H^1$ semi-norm} \\ \hline
      $N$ & error & order & error & order & error & order & error & order \\ \hline
      8  & 2.90e-03 &&  8.99e-02 && 2.17e-03 && 6.83e-02 &  \\ \hline
      16 & 8.43e-04 & 1.8 & 6.09e-02 & 0.6 & 5.18e-04 & 2.1 & 3.55e-02 & 0.9  \\ \hline
      32 & 3.00e-04 & 1.5 & 4.43e-02 & 0.5 & 1.43e-04 & 1.9 & 1.95e-02 & 0.9  \\ \hline
      64 & 1.19e-04 & 1.3 & 3.15e-02 & 0.5 & 3.55e-05 & 2.0 & 1.08e-02 & 0.9  \\ \hline
      128& 4.55e-05 & 1.4 & 2.35e-02 & 0.4 & 9.28e-06 & 1.9 & 6.36e-03 & 0.8  \\ \hline
    \end{tabular}
  \end{center}
\end{table}

  Next, we demonstrate the robustness of the proposed polygonal moving mesh PDE algorithm.
  In the previous numerical tests, the MMPDE algorithm starts from an initial CVT mesh, which is of high quality.
  One may wonder that, when starting from a general polygonal mesh, whether the MMPDE algorithm is still able to
  capture the anisotropic structure of the solution correctly and avoid mesh tangling.
  To this end, we tested Example 1 starting from a $32\times 32$ Voronoi (but not CVT) initial mesh,
  \revA{generated by applying $1$ Lloyd's iteration to a completely random initial mesh to make it a little smoother.}
  The rest of the settings are the same as before.
  \revC{Note that the reference mesh $\hat{\T}_C$ is still set as the non-CVT initial mesh, because
  we are not able to get a CVT with exactly the same topological structure as a given polygonal mesh.}
  The initial mesh and the physical meshes after $1$, $5$ and $10$ outer iterations are shown in Fig.~\ref{fig:ex3-32-mesh}.
  Again, the meshes correctly capture the rapid changes of the solution.
  \revC{One may notice that in areas where the solution is flat, the mesh keeps some pattern of the non-CVT initial mesh.
  This is understandable since the reference mesh is not a CVT. 
  But the MMPDE algorithm still focuses correctly on the rapid changing area instead of on the flat area.}
  To illustrate this, we draw the distribution of $q_{ali,1}$ and $q_{eq,1}$ for $\T^{(10)}$ in Fig.~\ref{fig:ex3-32-distribution}.
  Note that the equidistribution measure $q_{eq,1}$ has larger value around the two curves where the solution changes rapidly.
  Thus this area will be treated in priority while the mesh movements in the flat areas will be relatively small.

  We also report the history of the
  approximation error and mesh quality measures in Fig.~\ref{fig:ex3-32-errhistory} and Fig.~\ref{fig:ex3-32-QQhistoryL2}.
  It is interesting to see that there is a small bump in the history. This turns out to be the result of a few ``bad'' mesh cells
  on which the values of $q_{ali}$ and $q_{eq}$ become temporarily large. However, it seems that the MMPDE algorithm is in general
  quite robust and {\it there is no mesh tangling} despite of these few bad-shaped cells.

 \begin{figure}[htb]
\begin{center}
\includegraphics[width=3cm]{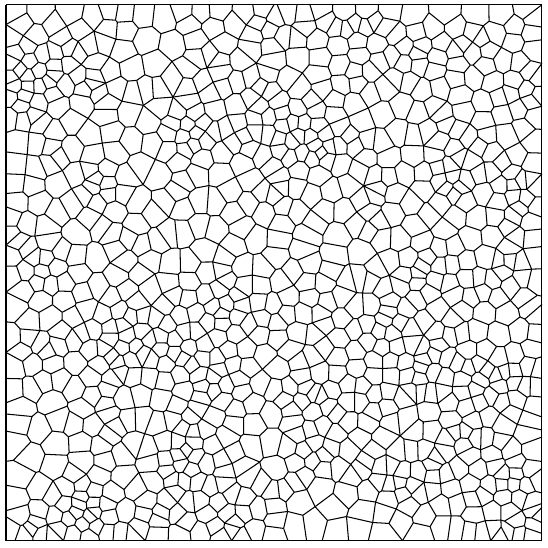}
\includegraphics[width=3cm]{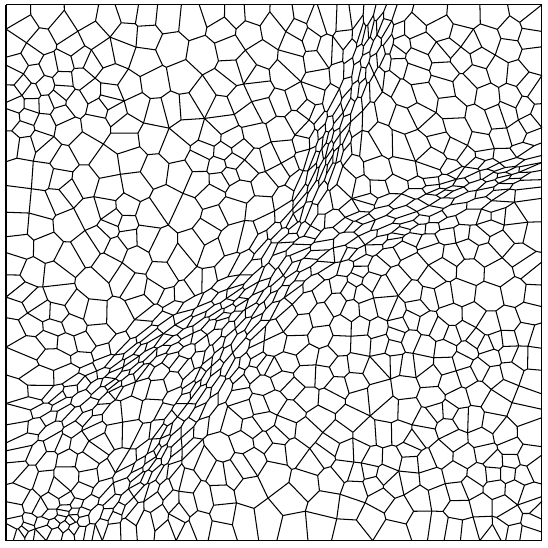}
\includegraphics[width=3cm]{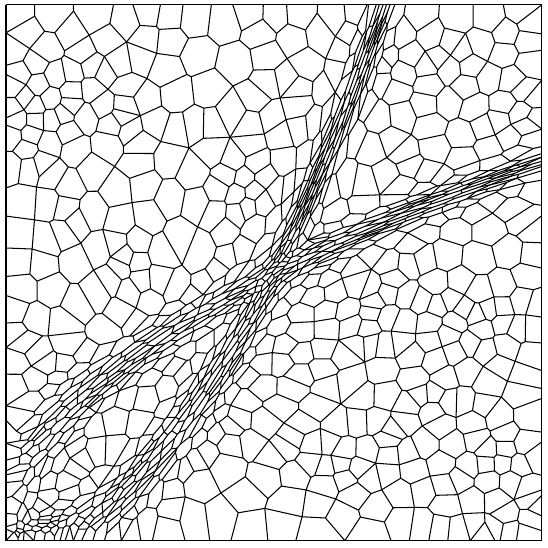}
\includegraphics[width=3cm]{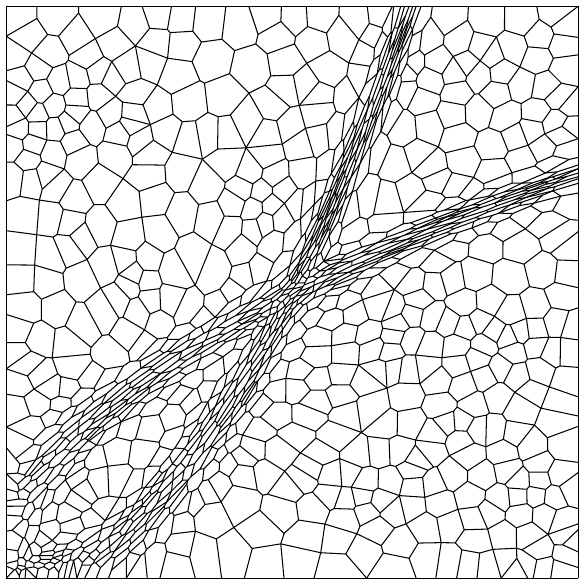}
\caption{Example 1 tested on $32\times 32$ non-CVT mesh. From left to right:
$\T^{(0)}$, $\T^{(1)}$, $\T^{(5)}$, and $\T^{(10)}$.}
\label{fig:ex3-32-mesh}
\end{center}
\end{figure} 

\begin{figure}[htb]
\begin{center}
\includegraphics[width=4.5cm]{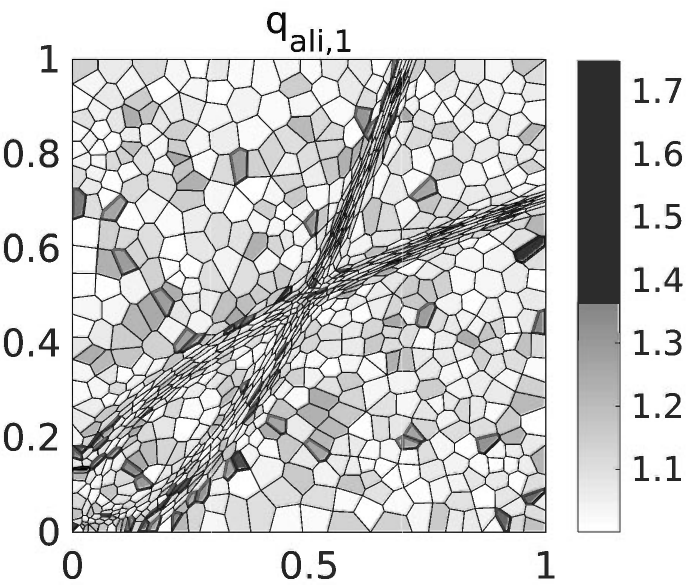}\quad\quad
\includegraphics[width=4.5cm]{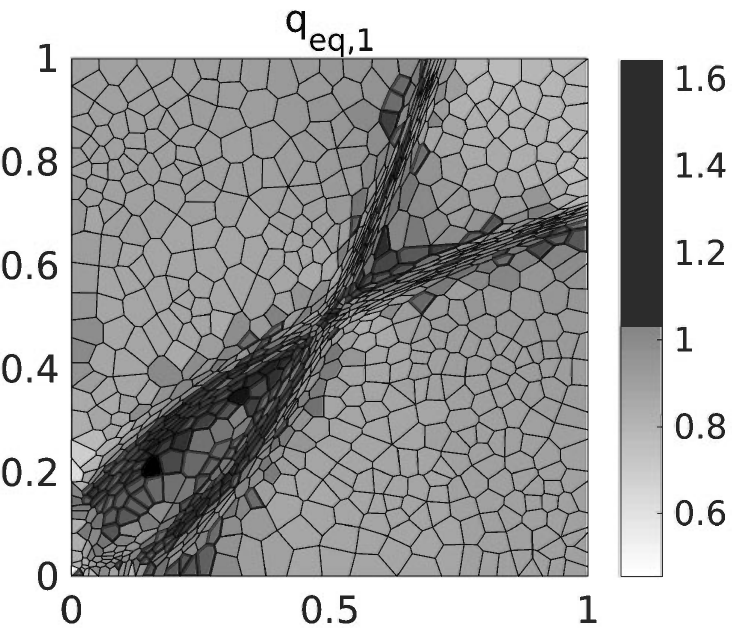}
\caption{Example 1 tested on $32\times 32$ non-CVT mesh. Distribution of $q_{ali,1}$ and $q_{eq,1}$ after 10 MMPDE outer iterations.}
\label{fig:ex3-32-distribution}
\end{center}
\end{figure}
 
\begin{figure}[htb]
\begin{center}
\includegraphics[width=4.5cm]{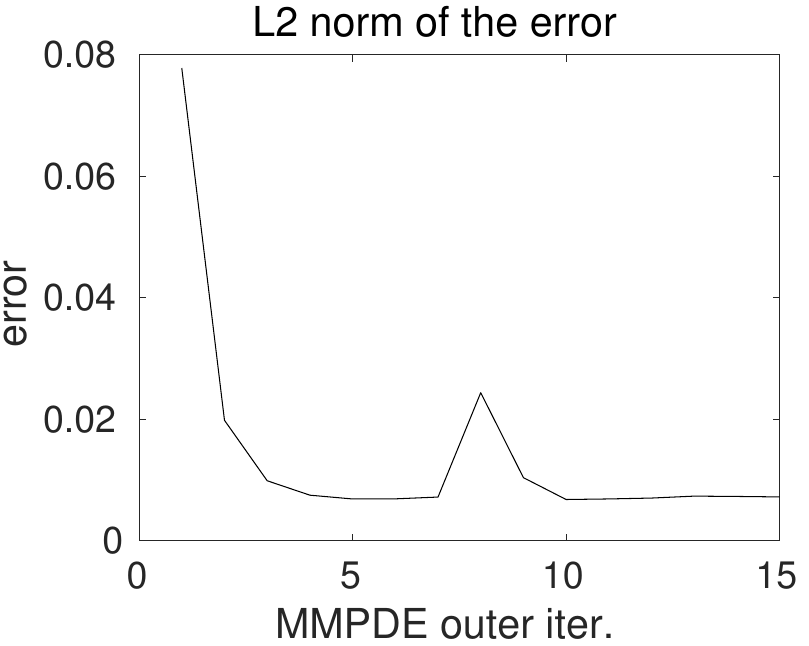}\quad\quad
\includegraphics[width=4.5cm]{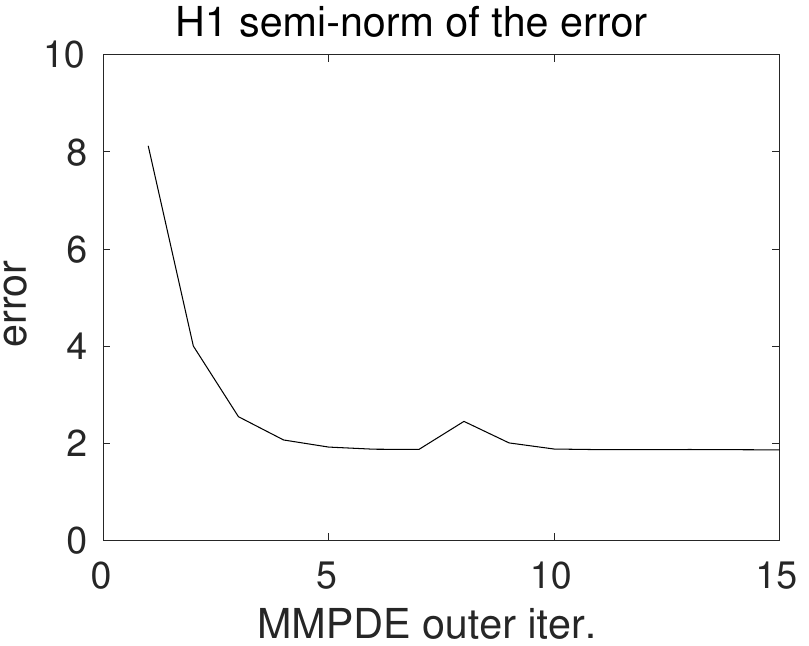}
\caption{Example 1 tested on $32\times 32$ non-CVT mesh. History of $L^2$ norm and $H^1$ semi-norm of the error $u-u_h$
is plotted as a function of the outer iteration number.}
\label{fig:ex3-32-errhistory}
\end{center}
\end{figure}

\begin{figure}[htb]
\begin{center}
\includegraphics[width=4.5cm]{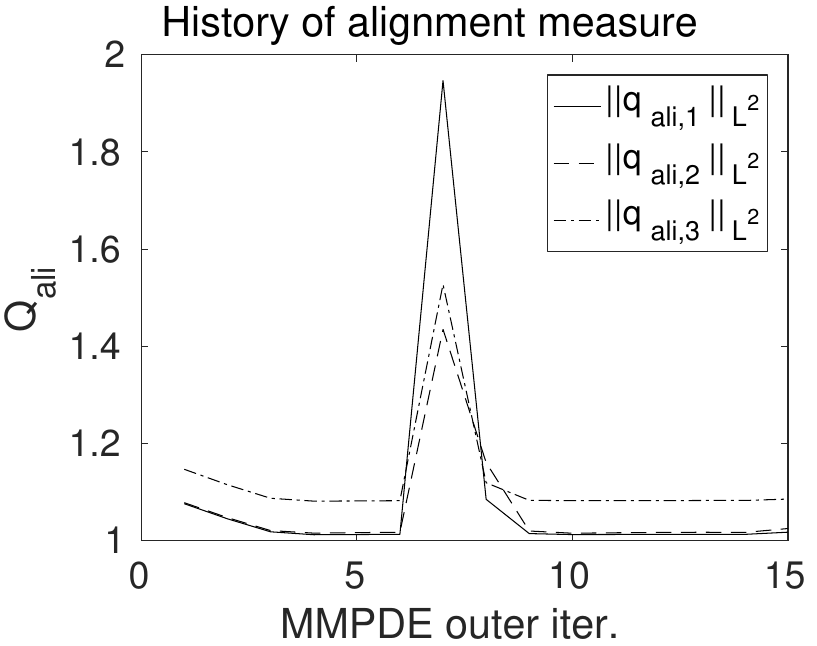}\quad\quad
\includegraphics[width=4.5cm]{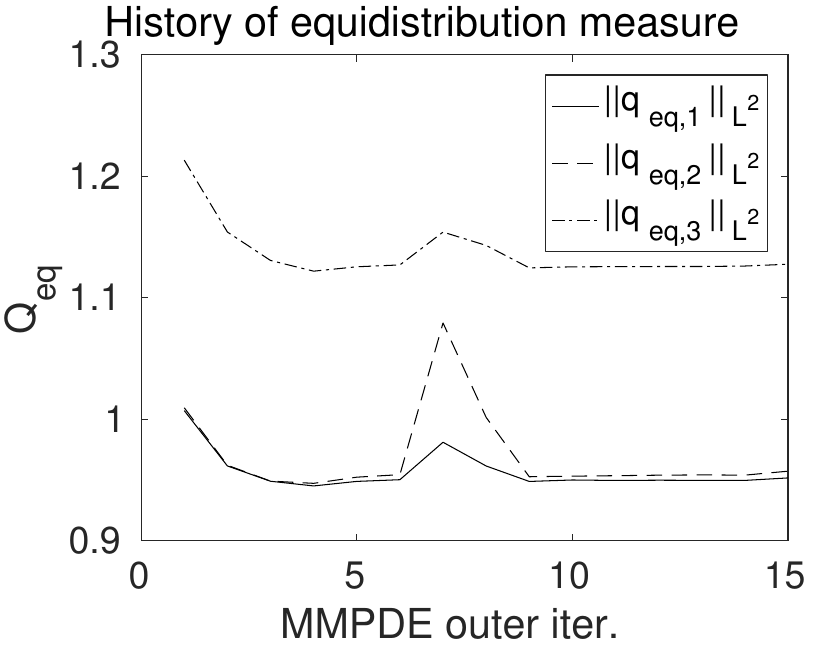}
\caption{Example 1 tested on $32\times 32$ non-CVT mesh. History of $\|q_{ali}\|_{L^2}$ and $\|q_{eq}\|_{L^2}$.
Subdivision (b) was used for $Q_{ali,2}$ and $Q_{eq,2}$.} \label{fig:ex3-32-QQhistoryL2}
\end{center}
\end{figure}

\revB{
   In examples 1 and 2, we have manually stopped the outer iteration after 10 steps, 
   which is long enough to get a good overview of the convergence history.
  From the history of mesh quality measures in Figs. \ref{fig:ex1-32-QQhistory} and \ref{fig:ex2-32-QQhistory},
   we see that the mesh quality measures do not decrease monotonically in the outer iterations. 
   This poses a problem on how to set the stopping criteria for the outer iteration.
   Here we suggest two possibilities based on the numerical results presented above.
   One may stop the outer iteration after three consecutive steps where the changes in the mesh quality measures stay in a given range,
   or one may stop when the $L^2$ norm or the $H^1$ norm of the error stops decreasing. 
   In practice, the above criteria can also be combined with other practical considerations. 
   Keep in mind that the MMPDE outer iterations, once interrupted, can restart from the interruption point instead of the initial step,
   as long as one \revZ{saves} the reference mesh.
   }

  The numerical results from examples 1 and 2 have sufficiently demonstrated the behavior of the MMPDE method \revZ{for} Poisson's equation.
  Based on these observations, we further test the method on three more general second-order elliptic equations of different types.
  For all three examples, the domain is still chosen as $\Omega=(0,1)\times (0,1)$ and the initial meshes as CVT meshes.
  The metric tensor $\M$ is set as in (\ref{M-1}).
  All other settings of the MMPDE solver remain the same as in the previous tests.
  \begin{description}
  \item{\bf Example 3:} This example is the boundary value problem of a strongly anisotropic diffusion equation,
    $$
    \begin{cases}
      -\nabla\cdot ( A\nabla u) = 0,\qquad &\textrm{in }\Omega\revZ{,}\\
      u = g, &\textrm{on }\partial\Omega\revZ{,}
      \end{cases}
    $$
    where
    $$
    \begin{aligned}
      A &= \begin{bmatrix}500.5 & 499.5\\ 499.5 & 500.5\end{bmatrix} 
        \quad \textrm{ and } \quad g = \begin{cases}1, &\textrm{on } [0, \frac{7}{8}]\times\{1\} \revZ{,}\\
                              8-8x, \quad &\textrm{on } (\frac{7}{8},1]\times\{1\} \revZ{,}\\
                              1, &\textrm{on } \{0\}\times[\frac{1}{8}, 1] \revZ{,}\\
                              8y, &\textrm{on } \{0\}\times[0,\frac{1}{8}) \revZ{,}\\
                              0, &\textrm{otherwise}.
        \end{cases}                                                                                  
    \end{aligned}
    $$               
  \item{\bf Example 4:} This example is a convection-dominant problem, i.e., 
    $$
    \begin{cases}
      -\varepsilon \Delta u + \boldsymbol{\beta}\cdot\nabla u  = 0,\qquad &\textrm{in }\Omega\revZ{,}\\
      u = g, &\textrm{on }x=0,\,x=1,\textrm{ and }y=0 \revZ{,}\\
      \frac{\partial u}{\partial n} = 0, \qquad  &\textrm{on }y=1 \revZ{,}
      \end{cases}
    $$
    where $\varepsilon$ is a small positive number,
    $$
    \boldsymbol{\beta} = \begin{bmatrix}10y^2-12x+1\\y+1\end{bmatrix},\qquad
      \quad \textrm{ and } \quad g =
      \begin{cases}
        0, &\textrm{on } \{0\}\times (0.5,1)\textrm{ and }(0.5,1]\times \{0\} \revZ{,}\\
        1, &\textrm{on } \{0\}\times (0,0.5]\textrm{ and }[0,0.5]\times \{0\} \revZ{,}\\
        sin^2(\pi y), &\textrm{on } \{1\}\times [0,1] .
      \end{cases}   
      $$
      Note that $g$ is discontinuous at the points $(0.5,0)$ and $(0,0.5)$, and hence does not belong to $H^{\frac{1}{2}}(\partial\Omega)$.
      Thus it is not a legitimate Dirichlet boundary condition.
      However, this type of Dirichlet boundary condition can still be implemented in the numerical discretization, as will be explained in details later.
      
    \item{\bf Example 5:} The last example is a diffusion-reaction problem with a thin boundary layer,
      $$
    \begin{cases}
      - \Delta u + \revZ{10^6} u  = \revZ{10^6}, \qquad &\textrm{in }\Omega \revZ{,}\\
      u = 0, &\textrm{on }\partial\Omega \revZ{.}
      \end{cases}
    $$
  \end{description}

  We examine these examples  one by one. Example 3 has been studied in \cite{Hua11, HW15, LH10}. Its exact solution is unknown but satisfies
  the maximum principle and its values lie strictly between $[0,1]$.
  Whether the numerical
  solution satisfies a discrete maximum principle or not depends heavily on the discretization \revZ{and mesh}, as demonstrated in \cite{Hua11, HW15, LH10}.
  So far \revA{there is no theoretical guarantee of the discrete maximum principle} of the Wachspress finite element on polygonal meshes.
  \revB{Numerical experiments have shown} that it does not hold on arbitrary polygonal meshes.
  Therefore, we want to test on Example 3 to check if the MMPDE method can capture the strongly anisotropic feature of the solution and if it helps to improve the solution in terms of the discrete maximum principle.
  In the test we use an initial $32\times 32$ CVT mesh.
  In Fig.~\ref{fig:ex3}, the mesh, \revZ{numerical solution}, distributions of $q_{ali,1}$ and $q_{eq,1}$ after 5 MMPDE outer iterations
  are reported, from which one can see that the MMPDE generates an {\it anisotropic polygonal mesh} aligned well with the structure
  of the solution. We also report the maximum and the minimum values of $u_h$ for each MMPDE iteration in Table \ref{tab:ex3}.
  From the table, one can see that the maximum and \revA{the minimum values of $u_h$ are outside of the range $[0,1]$}
  but the overshoot and undershoot quickly become very small after a few MMPDE iterations. This indicates that  
  the Wachspress finite element does not satisfy the discrete maximum principle
 on the polygonal mesh generated by the MMPDE method while the adaptation does improve
 the situation with much smaller overshoots/undershoots.
  It remains a future research topic to explore if it is possible to ensure the discrete maximum principle on polygonal meshes.
  On the other hand,  it can \revDD{readily} be seen that the MMPDE method correctly \revZ{captures
  the strongly anisotropic feature of the solution.}
  
\begin{figure}[htb]
\begin{center}
\includegraphics[width=3cm]{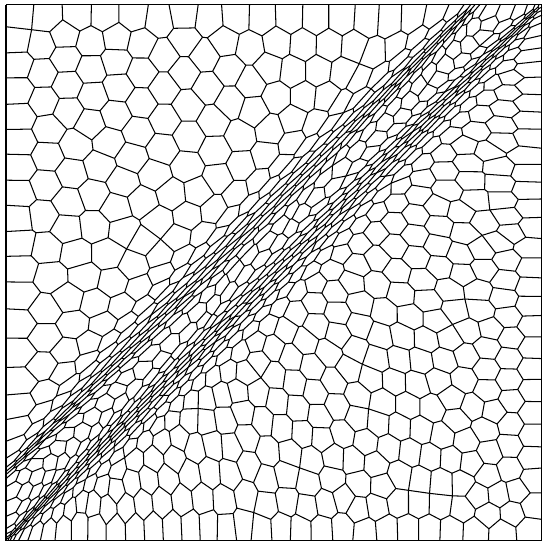}
\includegraphics[width=4cm]{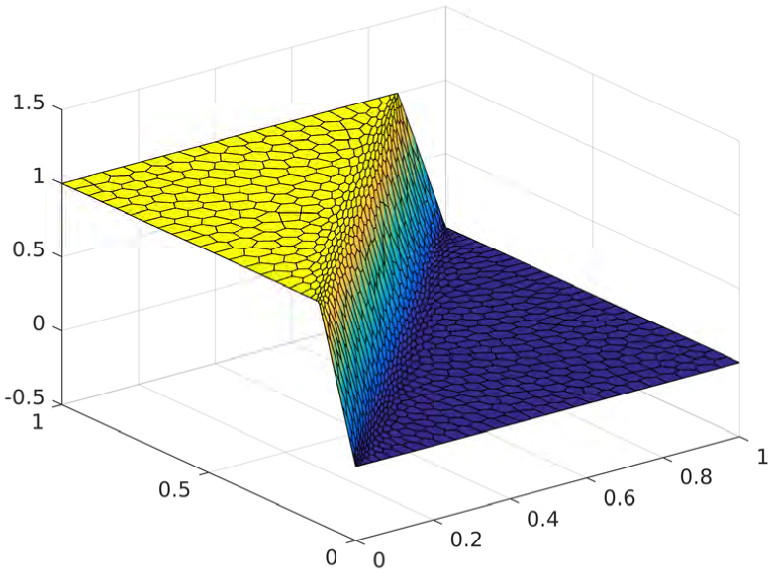}
\includegraphics[width=4cm]{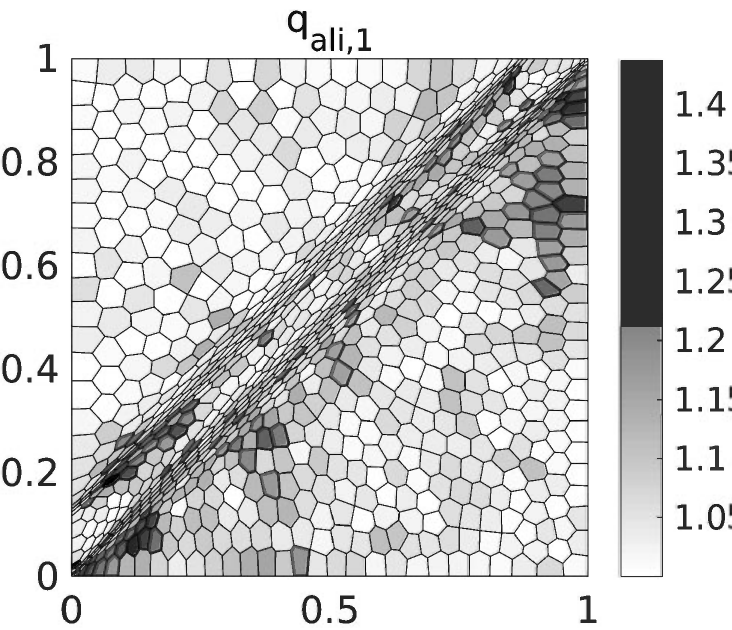}
\includegraphics[width=4cm]{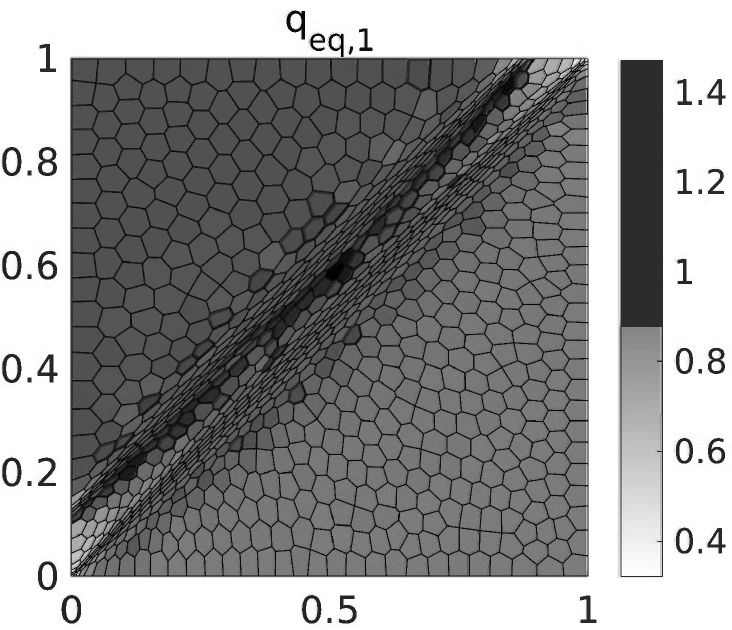}
\caption{Example 3 tested on $32\times 32$ mesh. The mesh, numerical solution, distributions of $q_{ali,1}$ and $q_{eq,1}$ after 5 MMPDE outer iterations.}
\label{fig:ex3}
\end{center}
\end{figure}

\begin{table}[ht]
  \caption{Example 3, the maximum and minimum values of $u_h$ at each MMPDE iteration steps, on  $32\times 32$ mesh.}
\label{tab:ex3}
  \begin{center}
    \begin{tabular}{|c|c|c|c|c|c|}
      \hline
      & iter. 1 &  iter. 2 &  iter. 3 &  iter. 4 &  iter. 5 \\ \hline
      $\max{u_h}$ & $1.0181$ & $1.0049$ & $1.0001$ & $1.0002$ & $1.0002$  \\ \hline
      $\min{u_h}$ & $-0.0231$& $-0.0055$& $-0.0001$& $-0.0002$& $-0.0002$  \\ \hline
    \end{tabular}
  \end{center}
\end{table}

Example 4 comes from modifying a hyperbolic benchmark problem in the Hermes2D C++ library available at {\tt http://www.hpfem.org/}.
The original hyperbolic benchmark has $\varepsilon = 0$ and does not require any boundary condition on the outflow boundary $y=1$.
Moreover, it allows the inflow boundary condition $g$ to have jumps at the points $(0.5,0)$ and $(0,0.5)$.
We add a small diffusion term to make the problem elliptic, and set a Neumann boundary condition on $y=1$.
We expect that the example still keeps the most interesting feature from the original hyperbolic problem, that is,
\revA{there are two internal layers that meet at a point on the boundary and are tangential to each other at the point},
as shown in Figs.~\ref{fig:ex4-1}-\ref{fig:ex4-4}.
Another interesting feature of the solution to Example 4  comes from the discontinuity of the Dirichlet boundary data $g$ at the points  $(0.5,0)$ and $(0,0.5)$.
Because of the jump, function $g$ does not belong to $H^{\frac{1}{2}}(\partial\Omega)$ and hence is not a legitimate boundary condition for the variational problem.
A famous example of this type of discontinuous Dirichlet boundary condition is the Stokes/Navier-Stokes driven cavity problem.
\revB{
  The finite element implementation can circumvent this problem by
  replacing the discontinuous Dirichlet boundary data with its continuous interpolation,
  which has the same effect of adding a smoothing layer with the width of one mesh element \cite{CaiWang09}.
}
However, it sometimes \revA{introduces a weak singularity} around these jump points.
Oscillations will occur in the numerical solution if the mesh cannot fully resolve the
rich details of the solution, as shown in the left graph of Fig.~\ref{fig:ex4-1}.

We first test Example 4 with $\varepsilon = 10^{-3}$ on a $32\times 32$ mesh.
In Fig.~\ref{fig:ex4-1}, the mesh after 10 MMPDE outer iterations is presented, together with the front and the back view of the numerical solution.
Small oscillations can be observed around the point $(0.5,0)$.
All other regions look fine, including where the two internal layers meet and the point $(0,0.5)$ where another jump of $g$ occurs.
By comparing the points $(0.5,0)$ and $(0,0.5)$, we suspect that the incidental angle of an internal layer with $\partial\Omega$
may affect the numerical solution. 
In Fig.~\ref{fig:ex4-2}, we plot the distributions of $q_{ali,1}$ and $q_{eq,1}$,
which further confirms that it is more difficult to get mesh elements around the point $(0.5,0)$ to align with the solution.

\begin{figure}[htb]
  \begin{center}
\includegraphics[width=4cm]{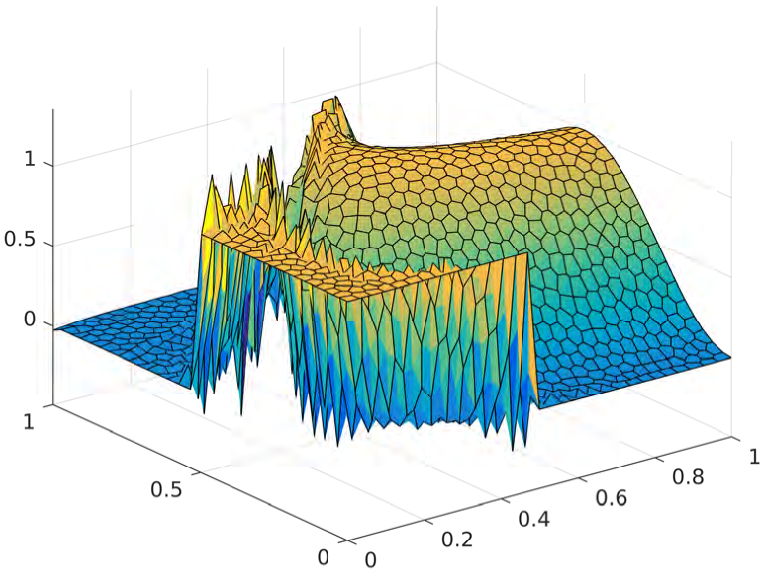}    
\includegraphics[width=3cm]{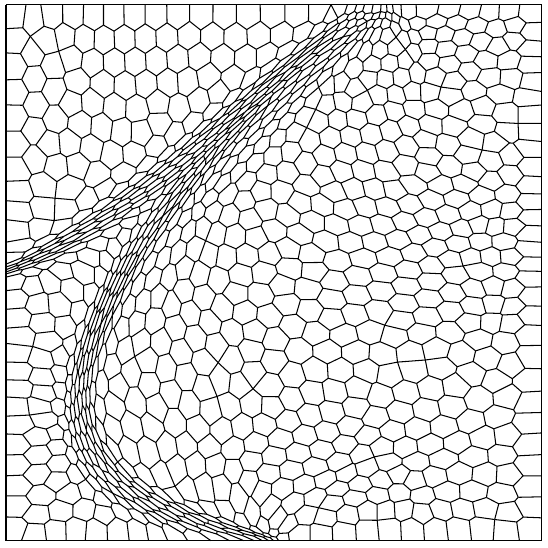}
\includegraphics[width=4cm]{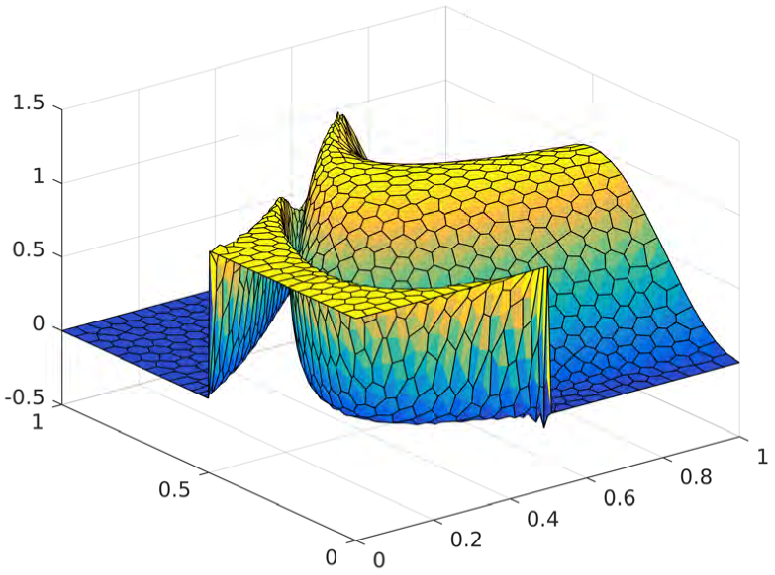}
\includegraphics[width=4cm]{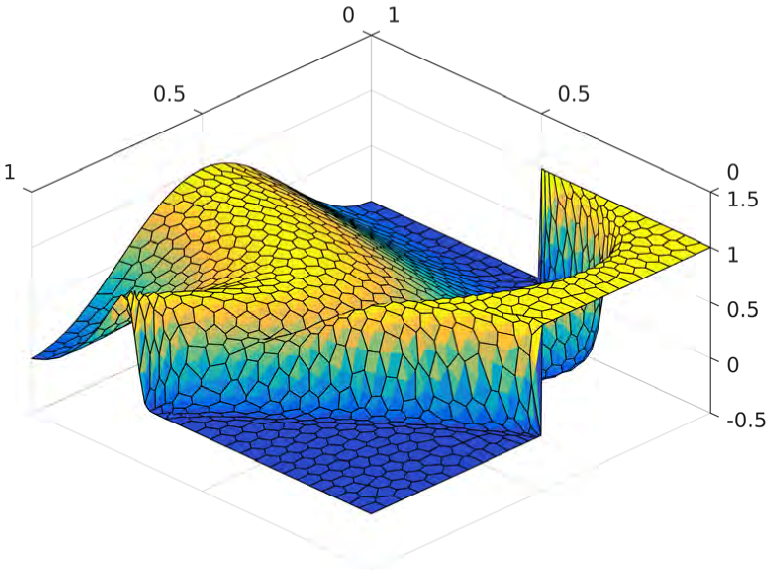}
\caption{Example 4 with $\varepsilon = 10^{-3}$ on $32\times 32$ mesh.
    The leftmost graph is the numerical solution on a uniform CVT mesh. 
    The rest of the graphs are the mesh,  the front and the back view of the numerical solution after 10 MMPDE outer iterations.}
\label{fig:ex4-1}
\end{center}
\end{figure}

\begin{figure}[htb]
  \begin{center}
\includegraphics[width=4cm]{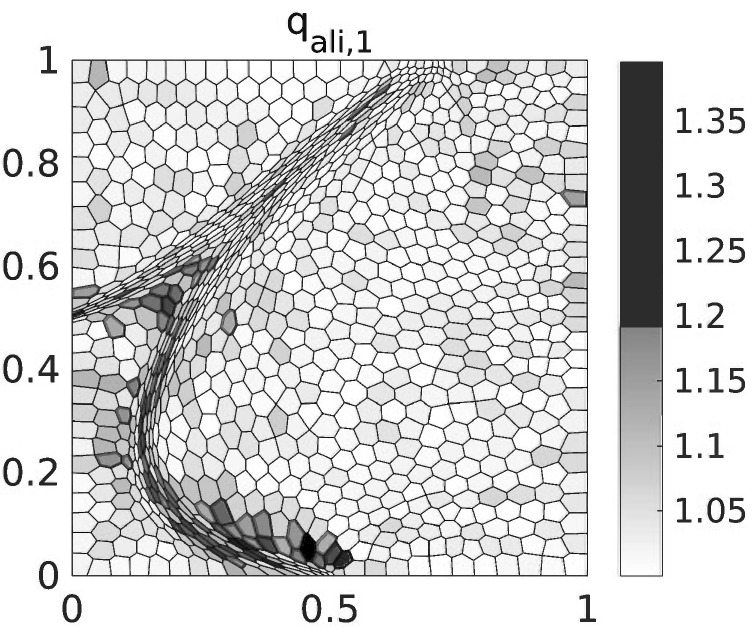}\quad \quad
\includegraphics[width=4cm]{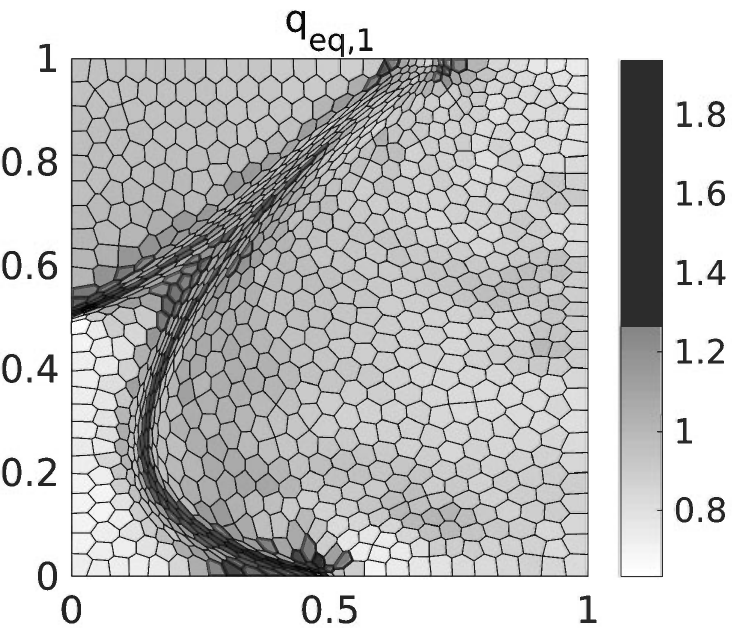}
\caption{Example 4 with $\varepsilon = 10^{-3}$ on $32\times 32$ mesh.
    The distributions of $q_{ali,1}$ and $q_{eq,1}$ after 10 MMPDE outer iterations.}
\label{fig:ex4-2}
\end{center}
\end{figure}

The diffusion coefficient $\varepsilon$ in Example 4 has a damping effect on the solution, especially in the area where two internal layers meet.
In  Fig.~\ref{fig:ex4-3}, we present solutions for $\varepsilon = 10^{-3}$, $10^{-4}$ and $5\times 10^{-5}$.
From the graphs one can see that, when $\varepsilon$ gets smaller,
the upper internal layer extends further towards the boundary $y=1$ and thus making the problem more difficult to solve.
We find it necessary to use finer meshes in order to get satisfying solution for smaller $\varepsilon$.
In the numerical experiments we have used $48\times 48$ mesh for $\varepsilon = 10^{-4}$
and $56\times 56$ mesh for $\varepsilon =5\times 10^{-5}$, both yield satisfying results after 10 MMPDE outer iterations.
For $\varepsilon = 10^{-4}$ and $\varepsilon =5\times 10^{-5}$, there are oscillations around the point where two internal layers meet
and around the point $(0.5,0)$, but not at the point $(0,0.5)$.
We suspect that the oscillation around where the two internal layers meet is due to the fact that
the meeting points gets closer to the Neumann boundary $x=1$ when $\varepsilon$ gets smaller,
which creates complications.
At the point $(0.5,0)$, \revA{the way the finite element method handles} the jump of $g$ is equivalent
to setting the width of the internal layer at $(0.5,0)$ to be $O(h)$.
Hence as long as $h$ is far greater than $\varepsilon$, i.e., the damping effect of $\varepsilon$ is limited
within the internal layer, such an oscillation cannot be eliminated by simply reducing $h$.
Again, by comparing the numerical solution at the points $(0.5,0)$ and $(0,0.5)$, we confirm our previous suspicion
that such oscillation is related to the the incidental angle of the internal layer with $\partial\Omega$.
\revA{In Figs.~\ref{fig:ex4-3}-\ref{fig:ex4-4}}, the final mesh as well as the distributions of $q_{ali,1}$ and $q_{eq,1}$ for $\varepsilon = 5\times 10^{-5}$ on $56\times 56$ mesh
are presented.
From the graphs, one sees that the MMPDE method correctly catches both internal layers, and more importantly,
successfully takes care of the tangential meeting of internal layers as well as the ``discontinuous'' Dirichlet boundary data.

\begin{figure}[htb]
  \begin{center}
    \includegraphics[width=4cm]{Ex4-1e3-32-mesh10-sola}\;
    \includegraphics[width=4cm]{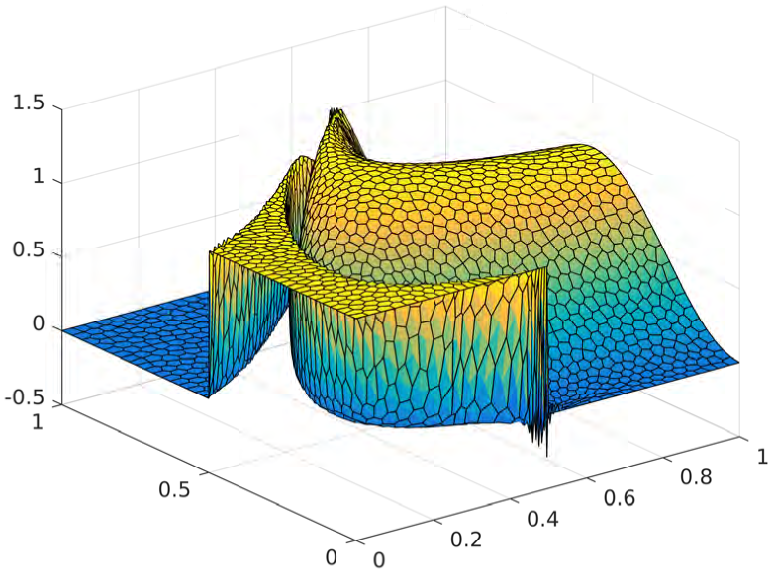}\;
    \includegraphics[width=4cm]{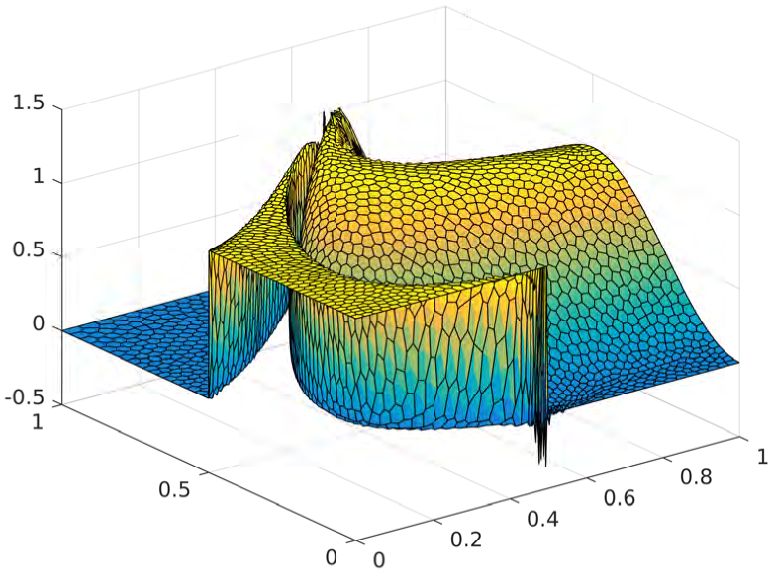} \\   
    \quad\includegraphics[width=4cm]{Ex4-1e3-32-mesh10-solb}\;
    \includegraphics[width=4cm]{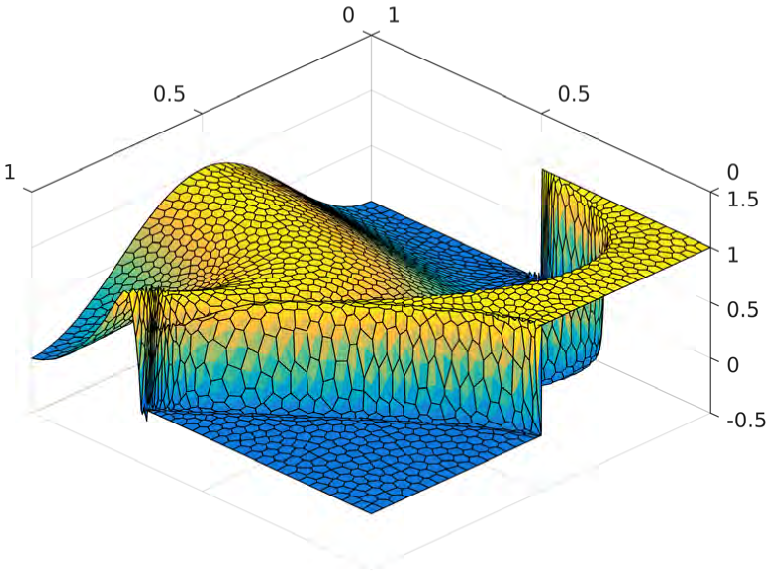}\;
    \includegraphics[width=4cm]{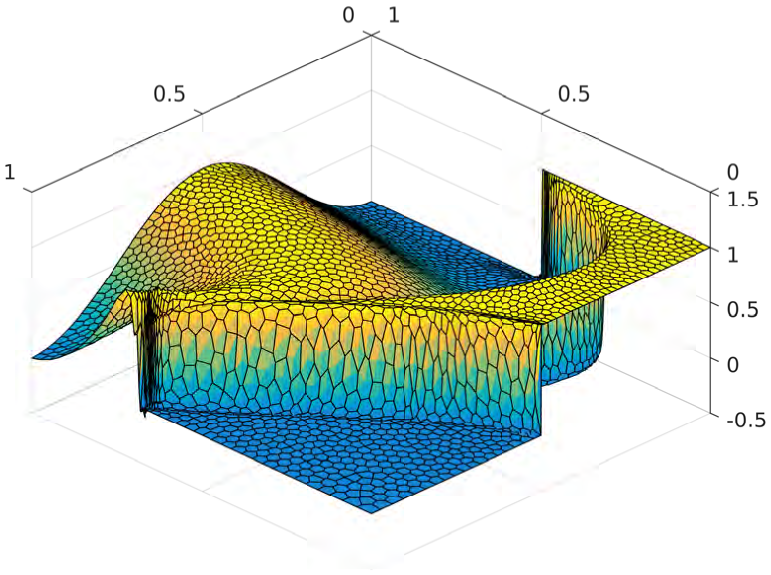}
    \caption{Example 4. From left to right: solutions for $\varepsilon = 10^{-3}$ on $32\times 32$ mesh,
        for $\varepsilon = 10^{-4}$ on  $48\times 48$ mesh and for $\varepsilon =5\times 10^{-5}$ on $56\times 56$ mesh,
    each taking 10 MMPDE outer iterations. The first row contains the front view and the second row contains the back view.}
\label{fig:ex4-3}
\end{center}
\end{figure}

\begin{figure}[htb]
  \begin{center}
    \includegraphics[width=3cm]{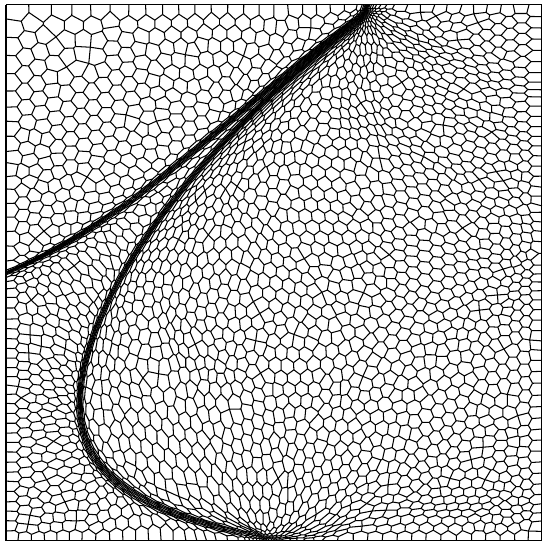}
    \includegraphics[width=4cm]{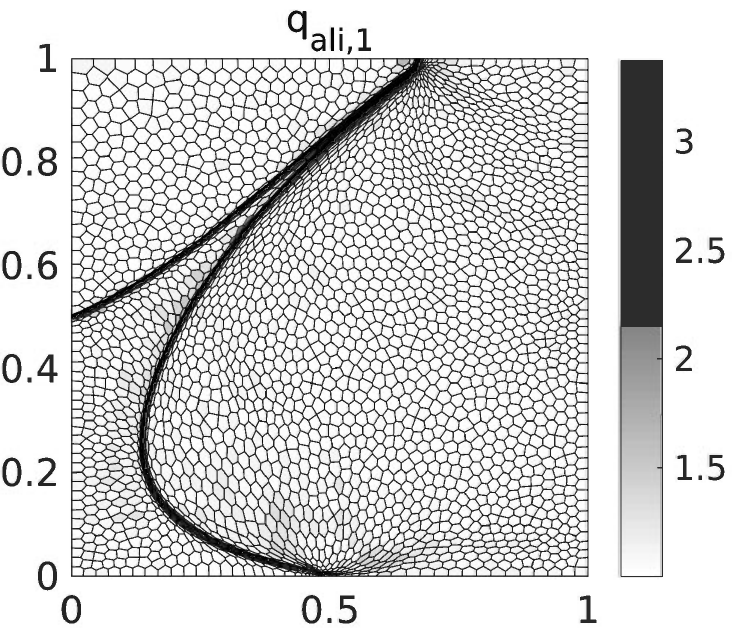}
    \includegraphics[width=4cm]{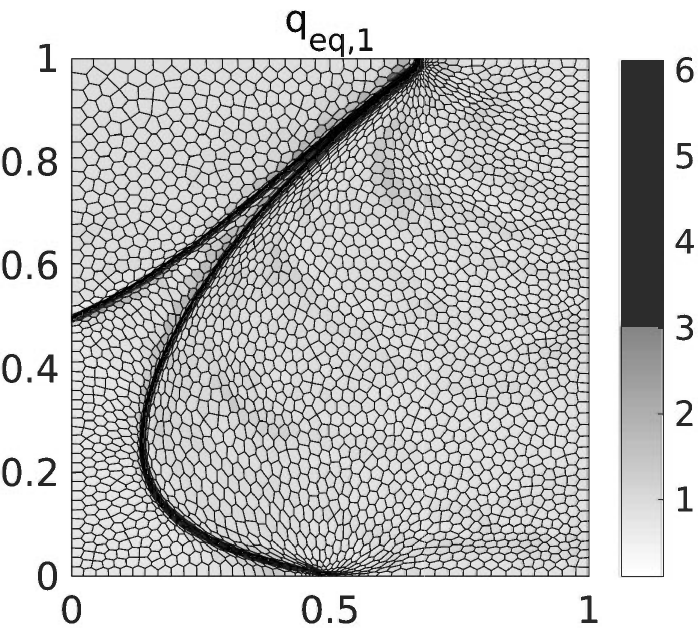}
    \caption{Example 4 with $\varepsilon = 5\times 10^{-5}$ on $56\times 56$ mesh,
    the mesh and the distributions of $q_{ali,1}$ and $q_{eq,1}$ after 10 MMPDE outer iterations.}
\label{fig:ex4-4}
\end{center}
\end{figure}

\revDD{
We also test a modification of Example 4, with the Neumann boundary condition $\frac{\partial u}{\partial n} = 0$ on the outflow boundary $y=1$ being replaced by
a homogeneous Dirichlet boundary condition $u=0$. This is known to create a boundary layer near $y=1$.
In Fig.~\ref{fig:ex4-alt}, the mesh after 10 MMPDE outer iterations is plotted, together with the front and back views of the numerical solution.
The results suggest that the MMPDE algorithm handles the boundary layer well for convection dominated convection-diffusion problems while correctly capturing the intersection of an internal layer and a boundary layer.

\begin{figure}[htb]
  \begin{center}
\includegraphics[width=2.8cm]{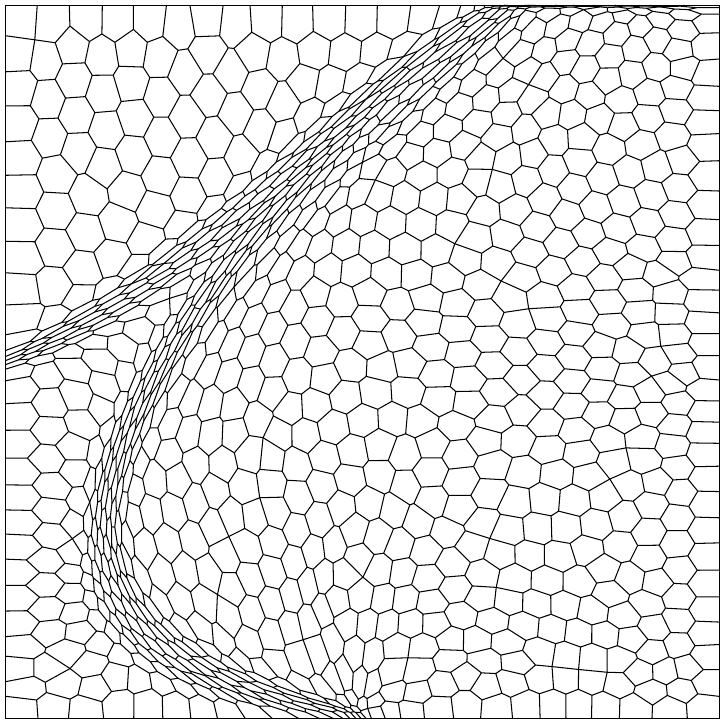}    
\includegraphics[width=3cm]{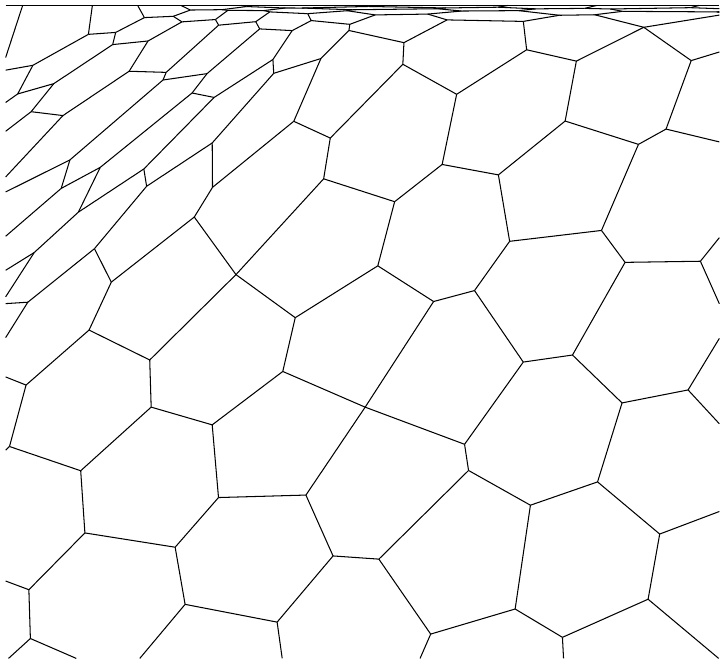}
\includegraphics[width=4cm]{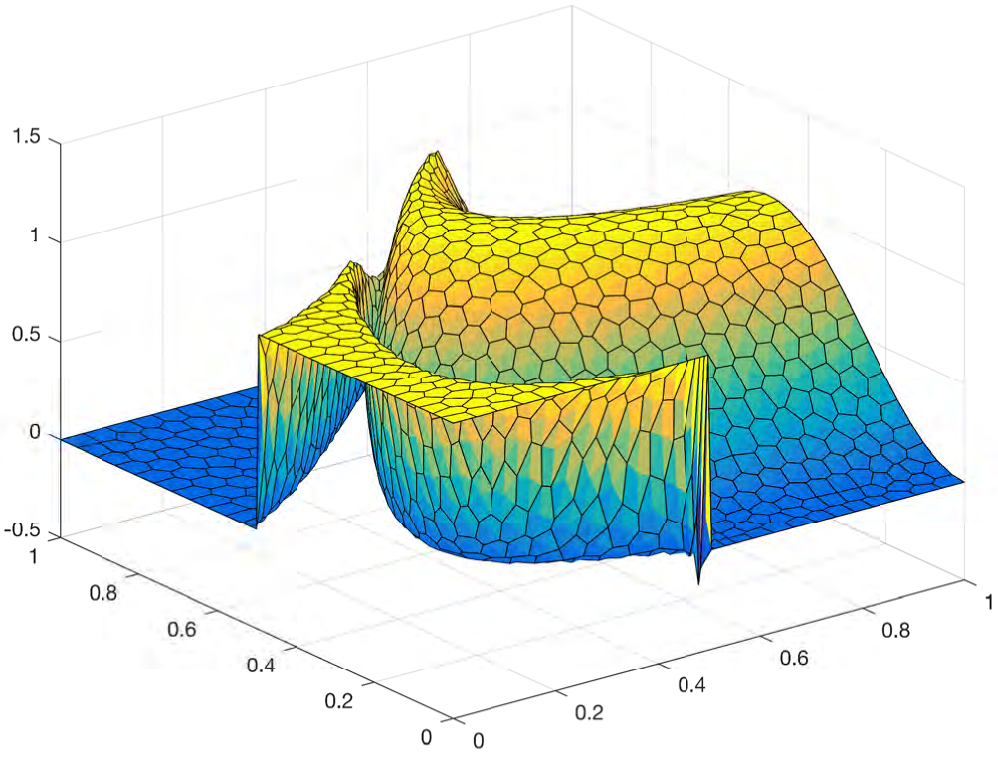}
\includegraphics[width=4cm]{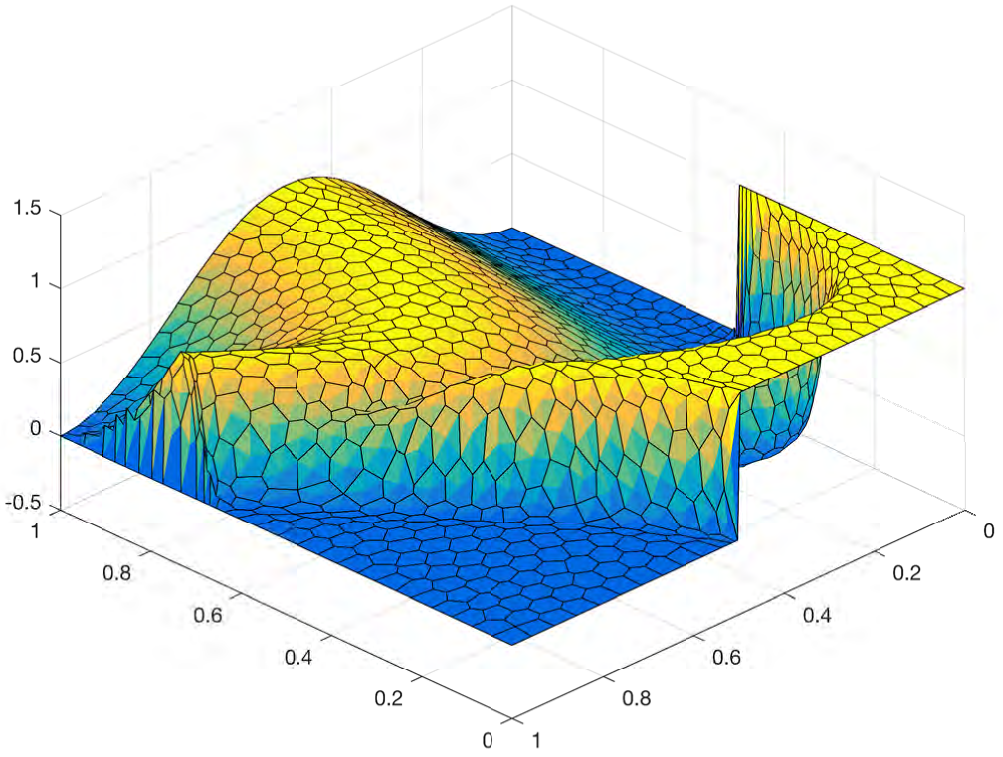}
\caption{Modified Example 4 (pure Dirichlet boundary condition) with $\varepsilon = 10^{-3}$ on $32\times 32$ mesh,
the mesh, a zoom-in to the upper-right region of the mesh, the front and the back view of the numerical solution after 10 MMPDE outer iterations.}
\label{fig:ex4-alt}
\end{center}
\end{figure}
}

Finally, Example 5 is a well-known singularly perturbed problem with a thin boundary layer around $\partial\Omega$.
When solved on a uniform mesh with characteristic size $h$ greater than the thickness of the boundary layer,
the numerical solution will be polluted by heavy oscillations, as shown in the leftmost graph of Fig.~\ref{fig:ex5-1}.
\revZ{We test the MMPDE method for this problem, using} an initial $32\times 32$ CVT mesh.
\revZ{The mesh and numerical solution} after 20 MMPDE outer iterations are given in Fig.~\ref{fig:ex5-1},
and the distributions of $q_{ali,1}$ and $q_{eq,1}$ are given in Fig.~\ref{fig:ex5-2}.
One can see that the MMPDE method accurately catches the strongly anisotropic thin boundary layer
\revZ{and} gives a correct solution without any oscillation.

\begin{figure}[htb]
  \begin{center}
\includegraphics[width=4cm]{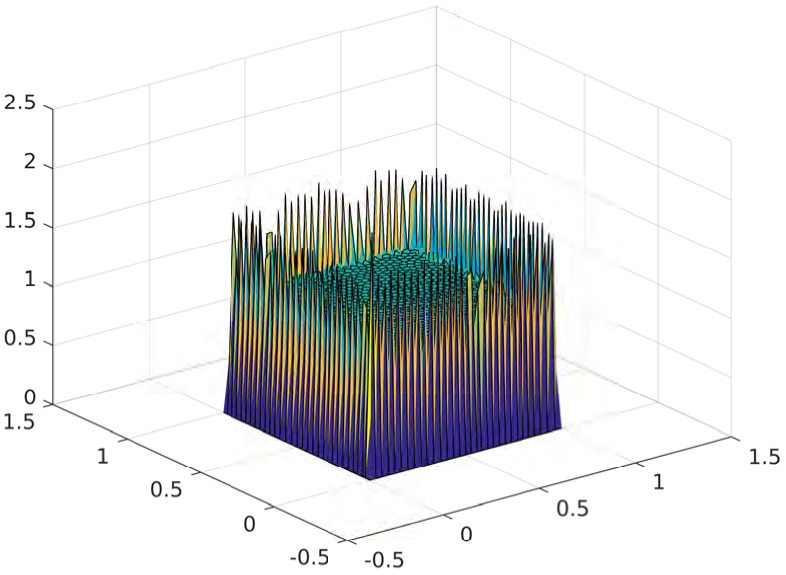}    
\includegraphics[width=3cm]{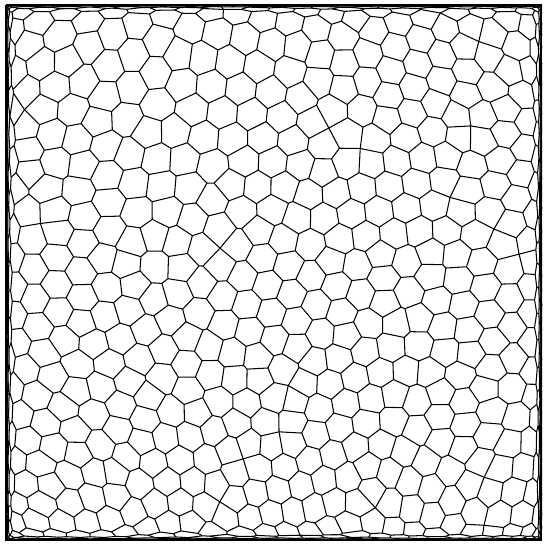}
\includegraphics[width=3cm]{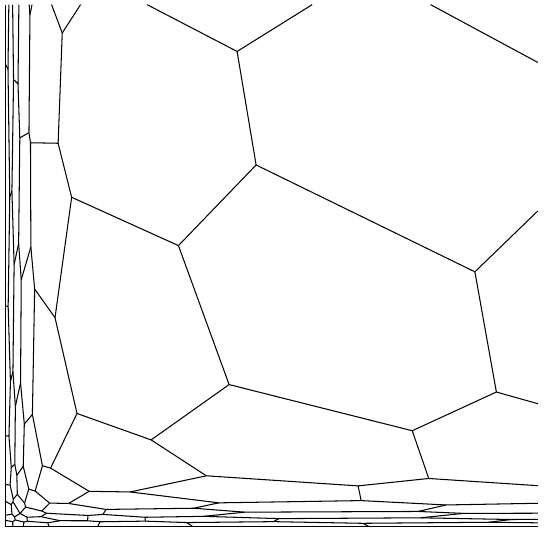}
\includegraphics[width=4cm]{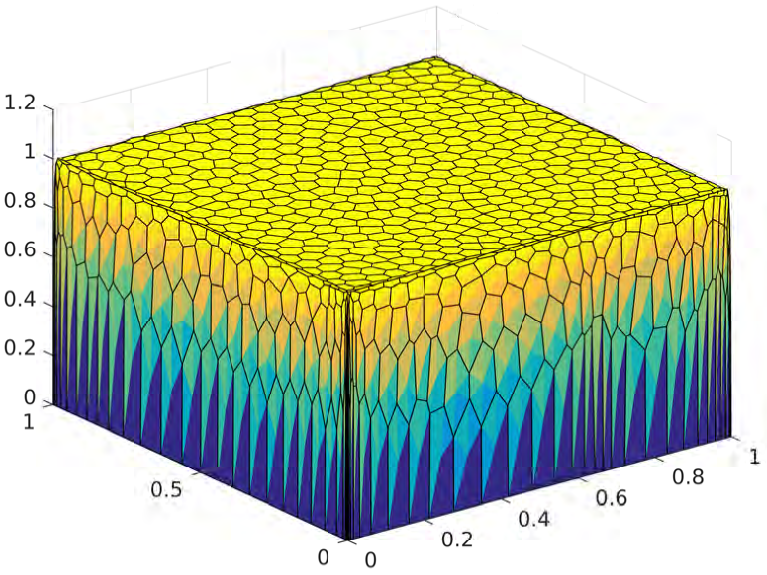}
\caption{Example 5 tested on $32\times 32$ mesh.
    The leftmost graph is the numerical solution on the initial mesh. 
    The rest of the graphs are the mesh,  its detail at the low-left corner, and the numerical solution after 20 MMPDE outer iterations.}
\label{fig:ex5-1}
\end{center}
\end{figure}

\begin{figure}[htb]
\begin{center}
\includegraphics[width=4.5cm]{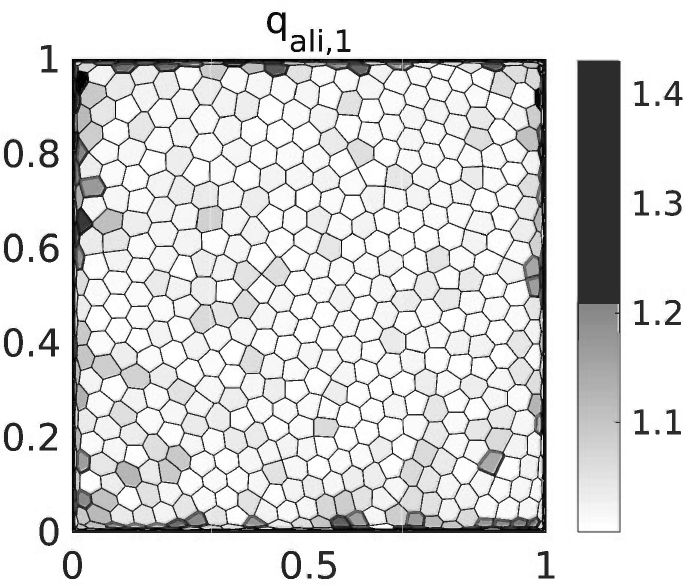}
\includegraphics[width=4.5cm]{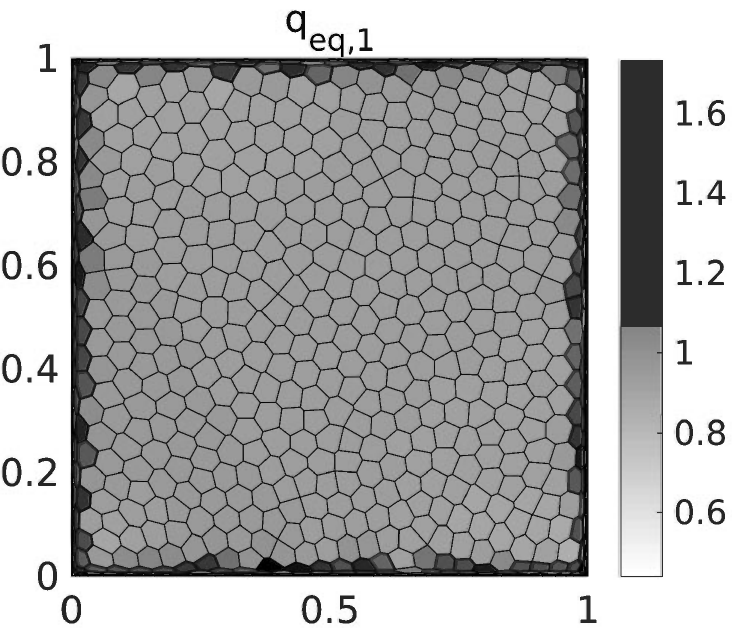}
\caption{Example 5 tested on $32\times 32$ mesh. The distributions of $q_{ali,1}$ and $q_{eq,1}$ after 20 MMPDE outer iterations.}
\label{fig:ex5-2}
\end{center}
\end{figure}

\section{Conclusions}

In the previous sections we have studied anisotropic mesh quality measures and adaptation
for polygonal meshes. Three sets of alignment (for shape) and equidistribution (for size)
quality measures have been developed. They can be viewed as numerical discretizations or approximations
of continuous measures (\ref{eq:contGlobalMeasurements}). Their major features are summarized
in subsection~\ref{sec:summary}. Numerical examples show that they all can give good \revZ{indications} for
the quality of polygonal meshes.

Among the three sets of measures, $Q_{ali,2}$ (\ref{ali-2}) and $Q_{eq,2}$ (\ref{eq-2}) provide
the toughest measurement. One of the special cases for defining $Q_{ali,2}$ and $Q_{eq,2}$ is
to use piecewise linear generalized barycentric mappings which can be based on triangulation
of each polygonal cell into triangles. The collection of those triangles forms a triangular mesh,
through which many adaptive mesh algorithms designed for triangular meshes can be adopted
for polygonal mesh adaptation. Though special care needs to be taken since a good polygonal
cell can have bad triangles and algorithms should focus on improving the quality of polygonal cells
instead of triangles.

Along this line, an anisotropic adaptive polygonal mesh algorithm based on the MMPDE
moving mesh method has been devised. For a given reference computational (polygonal)
mesh, it moves the mesh vertices such that the adaptive polygonal mesh has
a good quality with reference to the reference one. In the numerical examples, the reference computational
mesh has been taken as a CVT generated with Lloyd's algorithm. Numerical results confirm that
the proposed anisotropic adaptive polygonal mesh algorithm is able to produce desired mesh concentration
and lead to a more accurate solution than using a uniform polygonal mesh.

Finally, we would like to point out that the main idea in this work can be generalized
to three-dimensions to define anisotropic polyhedral mesh quality measures
and generate anisotropic adaptive polyhedral meshes.
However,  the technical details of the implementation and
the effectiveness and robustness of the method in three-dimensions
remain to be examined in future work.


\vspace{20pt}

\revi{
\noindent
{\bf Acknowledgements}

W. Huang was supported in part by the NSF under Grant DMS-1115118 and
the University of Kansas GRF Award \#2301056.
Y. Wang was supported in part by the National Natural Science Foundation of China
under grant numbers 11671210, 91630201,
and the FY2015 Spring Travel Program from the College of Art \& Science,
Oklahoma State University, during her visit to the University of Kansas in June 2015.
The authors are thankful to the anonymous referees for their valuable comments
in improving the quality of the paper.
}


\end{document}